\title{\large{\textbf{PHASE TRANSITION AND LEVEL-SET PERCOLATION \\ FOR THE GAUSSIAN FREE FIELD}}}
\date{}
\renewcommand{\P}[0]{\mathbb{P}}
\DeclareMathOperator{\M}{\mathbb{Z}^3}
\DeclareMathOperator{\E}{\mathbb{E}}
\DeclareMathOperator{\IL}{\mathbb{L}}
\DeclareMathOperator{\F}{\mathcal{F}}
\DeclareMathOperator{\A}{\mathcal{A}}
\DeclareMathOperator{\I}{\mathcal{I}}
\DeclareMathOperator{\T}{\mathcal{T}}
\DeclareMathOperator{\PP}{\mathbf{P}}
\DeclareMathOperator{\LL}{\mathscr{L}}
\providecommand{\norm}[1]{\lVert #1 \rVert}
\numberwithin{equation}{section}
\newtheorem{theorem}{Theorem}[section]
\newtheorem{lemma}[theorem]{Lemma}
\newtheorem{proposition}[theorem]{Proposition}
\newtheorem{cor}[theorem]{Corollary}
\newtheorem*{propbis}{Proposition 2.2'}
\theoremstyle{remark}
\newtheorem*{proof1}{Proof of Theorem \ref{T2.6}}
\newtheorem*{proof5}{Proof of Lemma \ref{L3.6}}
\newtheorem*{proofP}{Proof of Lemma \ref{PeierlsBound}}
\newtheorem*{proof6}{Proof of Corollary \ref{C2.7}}
\theoremstyle{definition}
\newtheorem{remark}[theorem]{Remark}
\begin{document}

\maketitle
\thispagestyle{empty}

\begin{center}
\vspace{-0.7cm}
Pierre-Fran\c cois Rodriguez\footnote{Departement Mathematik, ETH Z\"urich, CH-8092 Z\"urich, Switzerland. \\ \indent    This research was supported in part by the grant ERC-2009-AdG 245728-RWPERCRI.} and Alain-Sol Sznitman\footnotemark[\value{footnote}]
\end{center}
\vspace{1.0cm}
\begin{abstract}
\centering
\begin{minipage}{0.9\textwidth}
We consider level-set percolation for the Gaussian free field on $\mathbb{Z}^d$, $d \geq 3$, and prove that, as $h$ varies, there is a non-trivial percolation phase transition of the excursion set above level $h$ for all dimensions $d \geq 3$. So far, it was known that the corresponding critical level $h_*(d)$ satisfies $h_*(d) \geq 0$ for all $d \geq 3$ and that $h_*(3)$ is finite, see \cite{BLM}. We prove here that $h_*(d)$ is finite for all $d \geq 3$. In fact, we introduce a second critical parameter $h_{**} \geq h_*$, show that $h_{**}(d)$ is finite for all $d \geq 3$, and that the connectivity function of the excursion set above level $h$ has stretched exponential decay for all $h> h_{**}$. Finally, we prove that $h_*$ is strictly positive in high dimension. It remains open whether $h_*$ and $h_{**}$ actually coincide and whether $h_* > 0$ for all $d \geq 3$. 
\end{minipage}
\end{abstract}


\newpage

\thispagestyle{empty}
\mbox{}
\newpage

\section{Introduction}

In the present work, we investigate level-set percolation for the Gaussian free field on $\mathbb{Z}^d$, $d \geq 3$. This problem has already received much attention in the past, see for instance \cite{LS}, \cite{BLM}, and more recently \cite{G}, \cite{M}. The long-range dependence of the model makes this problem particularly interesting, but also harder to analyze. Here, we prove the existence of a non-trivial \nolinebreak critical \nolinebreak level for all $d \geq 3$, and the positivity of this critical level when $d$ is large enough. Some of our \nolinebreak methods \nolinebreak are inspired by the recent progress in the study of the percolative properties of random interlacements, where a similar long-range dependence occurs, see for instance \cite{RS}, \cite{SS}, \nolinebreak \cite{S3}, \nolinebreak \cite{T}.

\bigskip

We now describe our results and refer to Section \ref{NOTATION} for details. We consider the lattice $\mathbb{Z}^d$, $d \geq 3$, endowed with the usual nearest-neighbor graph structure. Our main object of study is the Gaussian free field on $\mathbb{Z}^d$, with canonical law $\P$ on $\mathbb{R}^{\mathbb{Z}^d}$ such that,
\begin{equation} \label{phi}
\begin{split}
&\text{under $\P$, the canonical field $\varphi$ = $(\varphi_x)_{x \in \mathbb{Z}^d}$ is a centered Gaussian} \\
&\text{field with covariance $\mathbb{E}[\varphi_x \varphi_y] = g (x,y)$}, \text{ for all } x,y \in \mathbb{Z}^d,
\end{split}
\end{equation}
where $g(\cdot, \cdot)$ denotes the Green function of simple random walk on $\mathbb{Z}^d$, see \eqref{GreenFunction}. Note in particular the presence of strong correlations, see \eqref{1.15}. For any \textit{level} $h\in \mathbb{R}$, we introduce the (random) subset of $\mathbb{Z}^d$
\begin{equation} \label{Ephi}
E_{\varphi}^{\geq h}  = \{ x \in \mathbb{Z}^d \ ; \ \varphi_x \geq h \},
\end{equation}
sometimes called \textit{excursion set} (above level $h$). We are interested in the event that the origin lies in an infinite cluster of $E_{\varphi}^{\geq h}$, which we denote by $\{ 0 \stackrel{\geq h}{\longleftrightarrow} \infty\}$, and ask for which values of $h$ this event occurs with positive probability. Since 
\begin{equation} \label{eta}
\eta(h) \stackrel{\text{def.}}{=} \P[0 \stackrel{\geq h}{\longleftrightarrow} \infty]
\end{equation}
is decreasing in $h$, it is sensible to define the critical point for level-set percolation as
\begin{equation}\label{h*}
h_*(d) = \inf \{ h \in \mathbb{R} \ ; \ \eta(h) =0 \} \ \in [-\infty, \infty]
\end{equation}
(with the convention $\inf \emptyset = \infty$). A non-trivial phase transition is then said to occur if $ h_*$ is finite. It is known that $h_* (d) \geq 0$ for all $d \geq 3$ and that $h_*(3) < \infty$ (see \cite{BLM}, Corollary 2 and Theorem 3, respectively; see also the concluding Remark 5.1 in \cite{BLM} to understand why the proof does not easily generalize to all $d \geq 3$). It is also known that when $d \geq 4$, for large $h$, there is no directed percolation inside $E_\varphi^{\geq h}$, see \cite{G}, p. 281 (note that this reference studies the percolative properties of the excursion sets of $|\varphi|$ in place of $\varphi$).

It is not intuitively obvious why $h_*$ should be finite, for it seems a priori conceivable that infinite clusters of $E_{\varphi}^{\geq h}$ could exist for \textit{all} $h >0$ due to the strong nature of the correlations. We show in Corollary \ref{C2.7} that this does not occur and that
\begin{equation} \label{THM1}
h_*(d) < \infty, \qquad \text{for all } d\geq 3.
\end{equation}
In fact, we prove a stronger result in Theorem \ref{T2.6}. We define a second critical parameter
\begin{equation} \label{h**}
h_{**}(d) = \inf \big\{ h \in \mathbb{R} \; ; \; \text{for some $\alpha >0$,} \;  \lim_{L\to \infty} L^\alpha \; \P \big[ B(0,L) \stackrel{\geq h}{\longleftrightarrow} S(0, 2L) \big] = 0 \big\}, 
\end{equation}
where the event $\big\{ B(0,L) \stackrel{\geq h}{\longleftrightarrow} S(0, 2L) \big\}$ refers to the existence of a (nearest-neighbor) path in $E_\varphi^{\geq h}$ connecting $B(0,L)$, the ball of radius $L$ around $0$ in the $\ell^\infty$-norm, to $S(0, 2L)$, the $\ell^\infty$-sphere of radius $2L$ around $0$. It is an easy matter (see Corollary \ref{C2.7} below) to show that
\begin{equation} \label{h*vsh**}
h_* \leq h_{**}.
\end{equation}
Now, we prove in Theorem \ref{T2.6} the stronger statement
\begin{equation} \label{THM2}
h_{**} < \infty, \qquad \text{for all } d\geq 3,
\end{equation}
and then obtain as a by-product that 
\begin{equation} \label{THM2bis}
\text{the connectivity function of $E_\varphi^{\geq h}$ has stretched exponential decay for all $h > h_{**}$}
\end{equation}
(see Theorem \ref{2.6} below for a precise statement). This immediately leads to the important question of whether $h_*$ and $h_{**}$ actually coincide. In case they differ, our results imply a marked transition in the decay of the connectivity function of $E_\varphi^{\geq h}$ at $h=h_{**}$, see Remark \ref{2.8} below.

Our second result concerns the critical level $h_*$ in high dimension. We are able to show in Theorem \ref{T3.3} that 
\begin{equation} \label{THM3}
\text{$h_*$ is strictly positive when $d$ is sufficiently large.}
\end{equation}
This is in accordance with recent numerical evidence, see \cite{M}, Chapter 4. We actually prove a stronger result than \eqref{THM3}. Namely, we show that one can find a positive level $h_0$, such that for large $d$, the restriction of $E_\varphi^{\geq h_0}$ to a thick two-dimensional slab percolates, see above \eqref{THM3bis}. Let us however point out that, by a result of \cite{GKR} (see p. 1151 therein), the restriction of $E_\varphi^{\geq 0}$ to $\mathbb{Z}^2$ (viewed as a subset of $\mathbb{Z}^d$), and, a fortiori, the restriction of $E_\varphi^{\geq h}$ to $\mathbb{Z}^2$, when $h$ is positive, do only contain finite connected components: excursion sets above any non-negative level do not percolate in planes. We refer to Remark \ref{ConcRems} 1) for more on this.

\vspace{0.5cm}

We now comment on the proofs. We begin with \eqref{THM2}. The key ingredient is a certain (static) renormalization scheme very similar to the one developed in Section 2 of \cite{SS} for the problem of percolation of the vacant set left by random interlacements (for a precise definition of this model, see \cite{S1}, Section 1; we merely note that the two ``corresponding'' quantities are $E_\varphi^{\geq h}$ and $\mathcal{V}^u$, the vacant set at level $u \geq 0$). We will be interested in the probability of certain crossing events viewed as functions of $h \in \mathbb{R}$,
\begin{align*}
f_n(h) \ \text{``$=$''} \ \P \big[ 
&\text{ $E_\varphi^{\geq h}$ contains a path from a given block of} \\
&\text{side length $L_n$ to the complement of its $L_n$-neighborhood} \big]
\end{align*}
(see \eqref{phievents} for the precise definition), where $(L_n)_{n\geq 0}$ is a geometrically increasing sequence of length scales, see \eqref{2.1}. Note that by \eqref{Ephi}, $f_n$ is decreasing in $h$. We then explicitly construct an increasing but bounded sequence $(h_n)_{n \geq0}$, with (finite) limit $h_\infty$, such that
\begin{equation} \label{0.1}
 \lim_{n \to \infty} f_n (h_n) = 0.
\end{equation}
This readily implies \eqref{THM1}, since $\eta(h_\infty) \leq f_n(h_\infty) \leq f_n(h_n)$ for all $n \geq 0$, hence $\eta(h_\infty)$ vanishes. By separating combinatorial complexity estimates from probabilistic bounds in $f_n(\cdot)$, see \eqref{2.6.1} and Lemma \ref{L2.1}, we are led to investigate the quantity
\begin{equation*} 
\begin{split}
p_n(h) \ \text{``$=$''}  \ \P \big[ \
&\text{$E_\varphi^{\geq h}$ contains paths connecting each of $2^n$ ``well-separated''} \\
&\text{boxes of side length $L_0$ (within a given box of side length $\sim L_n$)} \\
&\text{to the complement of their respective $L_0$-neighborhoods} \ \big]
\end{split}
\end{equation*}
(see \eqref{2.8} for the precise definition), where the $2^n$ boxes are indexed by the ``leaves'' of a dyadic tree of depth $n$. The key to proving \eqref{0.1} is to provide a suitable induction step relating $p_{n+1}(h_{n+1})$ to $p_n(h_n)$, for all $n \geq 0$, where the increase in parameter $h_n \to h_{n+1}$ allows to dominate the interactions (``sprinkling''). This appears in Proposition \ref{P2.2}. One then makes sure that $p_0(h_0)$ is chosen small enough by picking $h_0$ large, see Theorem \ref{T2.6}. The resulting estimates are fine enough to imply not only that $h_\infty \geq h_*$, but even the stretched exponential decay of the connectivity function of $E_\varphi^{\geq h_\infty}$, thus yielding \eqref{THM2}. The proof of \eqref{THM2bis} then only requires a refinement of this argument. Note that the strategy we have just described is precisely the one used in \cite{SS} for the proof of a similar theorem in the context of random interlacements. We actually also provide a generalization of Proposition \ref{P2.2}, which is of independent interest, but goes beyond what is directly needed here, see Proposition 2.2'. It has a similar spirit to the main renormalization step leading to the decoupling inequalities for random interlacements in \cite{S3}, see Remark \ref{R2.3}.

We now comment on the proof of \eqref{THM3}, which has two main ingredients. The first ingredient is a suitable decomposition of the field $\varphi$ restricted to the subspace $\mathbb{Z}^3$ into the sum of two independent Gaussian fields. The first field has independent components and the second field only acts as a ``perturbation'' when $d$ becomes large, see Lemmas \ref{L3.1} and \ref{L3.2} below. The second ingredient combines the fact that the critical value of Bernoulli site percolation on $\mathbb{Z}^3$ is smaller than $\frac{1}{2}$ (see \cite{CR}), with static renormalization (see Chapter $7$ of \cite{Gr}, \cite{GM}, \cite{Pis}), and a Peierls-type argument to control the perturbation created by the second field. This actually enables us to deduce a stronger result than \eqref{THM3}. Namely, we show in Theorem \ref{T3.3} that one can find a level $h_0 > 0$ and a positive integer $L_0$, such that for large $d$ and all $h \leq h_0$, the excursion set $E_{\varphi}^{\geq h}$ already percolates in the two-dimensional slab
\begin{equation} \label{THM3bis}
 \mathbb{Z}^2 \times [0, 2L_0) \times \{ 0\}^{d-3} \subset \mathbb{Z}^d.
\end{equation}

As already pointed out, some of our proofs employ strategies similar to those developed in the study of the percolative properties of the vacant set left by random interlacements, see \cite{SS}, \cite{S3}. This is not a mere coincidence, as we now explain. Continuous-time random interlacements on $\mathbb{Z}^d$, $d \geq 3$, correspond to a certain Poisson point process of doubly infinite trajectories modulo time-shift, governed by a probability $P$, with a non-negative parameter $u$ playing the role of a multiplicative factor of the intensity measure pertaining to this Poisson point process (the bigger $u$, the more trajectories ``fall'' on $\mathbb{Z}^d$), see \cite{S2}, \cite{S1}. This Poisson gas of doubly infinite trajectories (modulo time-shift) induces a random field of occupation times $(L_{x,u})_{x \in \mathbb{Z}^d}$ (so that the interlacement at level $u$ coincides with $\{ x \in \mathbb{Z}^d; L_{x,u}>0 \}$, whereas the vacant set at level $u$ equals $\{ x \in \mathbb{Z}^d; L_{x,u}=0 \}$). This field is closely linked to the Gaussian free field, as the following isomorphism theorem from \cite{S2} shows:
\begin{equation} \label{isom_THM}
\big( L_{x,u}+ \frac{1}{2}\varphi_x^2 \big)_{x \in \mathbb{Z}^d}, \text{ under $P \otimes \P$, has the same law as } \big( \frac{1}{2}(\varphi_x+\sqrt{2u})^2 \big)_{x \in \mathbb{Z}^d}, \text{ under $\P$}.
\end{equation}
It is tempting to use this identity as a transfer mechanism, and we hope to return to this point elsewhere.

We conclude this introduction by describing the organization of this article. In Section \ref{NOTATION}, we introduce some notation and review some known results concerning simple random walk on $\mathbb{Z}^d$ and the Gaussian free field. Section \ref{NONTRIVIAL PT} is devoted to proving that excursion sets at a high level do not percolate. The main results are Theorem \ref{T2.6} and Corollary \ref{C2.7}. The positivity of the critical level in high dimension and the percolation of excursion sets at low positive level in large enough two-dimensional slabs is established in Theorem \ref{T3.3} of Section \ref{POSLARGEd}.
 \vspace{0.3cm}

One final remark concerning our convention regarding constants: we denote by $c,c',\dots$ positive constants with values changing from place to place. Numbered constants $c_0,c_1,\dots$ are defined at the place they first occur within the text and remain fixed from then on until the end of the article. In Sections \ref{NOTATION} and \ref{NONTRIVIAL PT}, constants will implicitly depend on the dimension $d$. In Section \ref{POSLARGEd} however, constants will be purely numerical (and \emph{independent} of $d$). Throughout the entire article, dependence of constants on additional parameters will appear in the notation.

\section{Notation and some useful facts} \label{NOTATION}

In this section, we introduce some notation to be used in the sequel, and review some known results concerning both simple random walk and the Gaussian free field. 

We denote by $\mathbb{N}= \{0,1,2,\dots\}$ the set of natural numbers, and by $\mathbb{Z}=\{ \dots ,-1,0,1,\dots \}$ the set of integers. We write $\mathbb{R}$ for the set of real numbers, abbreviate $x \wedge y = \min \{x,y\}$ and $x \lor y = \max\{ x,y\}$ for any two numbers $x,y \in \mathbb{R}$, and denote by $[x]$ the integer part of $x$, for any $x \geq 0$. We consider the lattice $\mathbb{Z}^d$, and tacitly assume throughout that $d \geq 3$. On $\mathbb{Z}^d$, we respectively denote by $\vert \cdot \vert$ and $\vert \cdot \vert_\infty$ the Euclidean and $\ell^\infty$-norms. Moreover, for any $x \in \mathbb{Z}^d$ and $r \geq 0$, we let $B(x,r) = \{ y \in \mathbb{Z}^d  ; \ \vert y-x \vert_\infty \leq r \}$ and $S(x,r) = \{ y \in \mathbb{Z}^d  ; \ \vert y-x \vert_\infty = r \}$ stand for the the $\ell^\infty$-ball and $\ell^\infty$-sphere of radius $r$ centered at $x$. Given $K$ and $U$ subsets of $\mathbb{Z}^d$, $K^c = \mathbb{Z}^d \setminus K$ stands for the complement of $K$ in $\mathbb{Z}^d$, $\vert K \vert$ for the cardinality of $K$, $K \subset \subset \mathbb{Z}^d $ means that $\vert K \vert< \infty$, and $d(K,U) = \inf \{ \vert x-y \vert_\infty \ ; \ x \in K , y \in U\}$ denotes the $\ell^\infty$-distance between $K$ and $U$. If $K= \{ x\}$, we simply write $d(x, U)$. Finally, we define the inner boundary of $K$ to be the set $\partial^i K = \{ x \in K ; \ \exists y \in K^c, \vert y-x \vert =1 \}$, and the outer boundary of $K$ as $\partial K = \partial^i (K^c)$. We also introduce the diameter of any subset $K \subset \mathbb{Z}^d$, $\text{diam}(K)$, as its $\ell^\infty$-diameter, i.e. $ \text{diam}(K) = \sup \{ |x-y|_\infty \ ; \ x,y \in K \}$.

We endow $\mathbb{Z}^d$ with the nearest-neighbor graph structure, the edge-set consisting of all pairs of sites $\{ x ,y \}$, $x,y \in \mathbb{Z}^d$, such that $|x-y|=1$. A (nearest-neighbor) path is any sequence of vertices $\gamma= (x_i)_{0 \leq i \leq n}$, where $ n \geq 0 $ and $x_i \in \mathbb{Z}^d$ for all $0 \leq i \leq n$, satisfying $|x_i - x_{i-1}|=1$ for all $ 1\leq i \leq n$. Moreover, two lattice sites $x,y$ will be called $*$-nearest neighbors if $|x-y|_\infty =1$. A $*$-path is defined accordingly. Thus, any site $x \in \mathbb{Z}^d$ has $2d$ nearest neighbors and $3^d -1$ $*$-nearest neighbors.

We now introduce the (discrete-time) simple random walk on $\mathbb{Z}^d$. To this end, we let $W$ be the space of nearest-neighbor $\mathbb{Z}^d$-valued trajectories defined for non-negative times, and let $\mathcal{W}$, $(X_n)_{n \geq 0}$, stand for the canonical $\sigma$-algebra and canonical process on $W$, respectively. Since $d \geq 3$, the random walk is transient. Furthermore, we write $P_x$ for the canonical law of the walk starting at $x \in \mathbb{Z}^d$ and $E_x$ for the corresponding expectation. We denote by $g(\cdot, \cdot)$ the Green function
of the walk, i.e.
\begin{equation}\label{GreenFunction}
g(x,y) = \sum_{n \geq 0} P_x [X_n = y], \qquad \text{for } x,y \in \mathbb{Z}^d,
\end{equation}
which is finite (since $d \geq3$) and symmetric. Moreover, $g(x,y)= g(x-y,0) \stackrel{\text{def.}}{=} g(x-y)$ due to translation invariance. Given $U \subset \mathbb{Z}^d$, we further denote the entrance time in $ U $ by $H_U = \inf \{ n\geq 0 ; X_n \in U \}$, the hitting time of $ U $ by $\widetilde{H}_U = \inf \{ n\geq 1 ; X_n \in U \}$, and the exit time from $ U $ by $T_U = \inf \{ n \geq 0 ; X_n \notin U \}= H_{U^c}$. This allows us to define the Green function $g_{U} (\cdot, \cdot)$ killed outside $U$ as
\begin{equation}\label{GreenFunctionSubK}
g_{U} (x,y) = \sum_{n \geq 0} P_x [X_n = y, \ n < T_U], \qquad \text{for } x,y \in \mathbb{Z}^d.
\end{equation}
It vanishes if $x \notin U$ or $y \notin U$. The relation between $g$ and $g_{U}$ for any $U \subset \mathbb{Z}^d$ is the following (we let $K = U^c$):
\begin{equation} \label{G-GsubK}
g(x,y) = g_{U} (x, y) + E_x [ H_K < \infty , \ g(X_{H_K}, y) ], \qquad \text{for } x,y \in \mathbb{Z}^d.
\end{equation}
The proof of \eqref{G-GsubK} is a mere application of the strong Markov property (at time $H_K$). 

We now turn to a few aspects of potential theory associated to simple random walk. For any $K \subset \subset \mathbb{Z}^d$, we write 
\begin{equation} \label{1.10}
e_{K} (x) = P_x [\widetilde{H}_K = \infty], \qquad x \in K,
\end{equation}
for the equilibrium measure (or escape probability) of $K$, and
\begin{equation}\label{1.11}
\text{cap}(K) = \sum_{x \in K} e_{K} (x)
\end{equation}
for its capacity. It immediately follows from \eqref{1.10} and \eqref{1.11} that the capacity is subadditive, i.e. 
\begin{equation} \label{1.12}
\text{cap}(K\cup K') \leq \text{cap}(K) + \text{cap}(K'), \qquad \text{for all } K,K' \subset \subset \mathbb{Z}^d.
\end{equation}
Moreover, the entrance probability in $K$ may be expressed in terms of $e_{K}(\cdot)$ (see for example \cite{Sp}, Theorem 25.1, p. 300) as
\begin{equation} \label{1.13}
P_x [H_K < \infty] = \sum_{y \in K} g(x,y) \cdot e_{K}(y),
\end{equation}
from which, together with classical bounds on the Green function (c.f. \eqref{1.15} below), one easily obtains (see \cite{SS}, Section 1 for a derivation) the following useful bound for the capacity of a box:
\begin{equation} \label{1.14}
\text{cap}(B(0,L)) \leq c L^{d-2}, \qquad \text{for all } L \geq 1.
\end{equation} 
We next review some useful asymptotics of $g(\cdot)$. Given two functions $f_1,f_2: \mathbb{Z}^d \longrightarrow \mathbb{R}$, we write $ f_1(x) \sim f_2 (x) $, as $ |x| \to \infty $, if they are asymptotic, i.e. if $ \lim_{|x| \to \infty} f_1(x) / f_2(x) = 1 $.
\begin{lemma} \label{L1.3} $(d \geq 3)$
\begin{align}
&g(x) \sim  c |x|^{2-d}, \qquad \text{as } |x| \to \infty.   \label{1.15} \\
&g(0)= 1 + \frac{1}{2d} + o(d^{-1}),\qquad \text{as } d \to \infty.  \label{1.16} \\
&P_0 [\widetilde{H}_{\M} = \infty] = 1 - \frac{7}{2d} + o(d^{-1}), \qquad \text{as } d \to \infty,  \label{1.17}
\end{align}
where $\M$ is viewed as $ \big( \M \times \ \{ 0\}^{d-3} \big) \subset \mathbb{Z}^d$ in \eqref{1.17}.
\end{lemma}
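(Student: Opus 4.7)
For \eqref{1.15}, I would appeal to the classical local central limit theorem: a single step of simple random walk on $\mathbb{Z}^d$ has covariance matrix $d^{-1}I$, whence $P_0[X_n=x]\sim 2(2\pi n/d)^{-d/2}e^{-d|x|^2/(2n)}$ on sites of the correct parity, and summation over $n\geq 1$ (with a standard tail cutoff separating $n\ll|x|^2$, $n\sim|x|^2$ and $n\gg|x|^2$) produces the stated $c|x|^{2-d}$ asymptotics. One may equivalently use the Fourier representation $g(x)=(2\pi)^{-d}\int_{[-\pi,\pi]^d}(1-\hat p(\theta))^{-1}\cos(x\cdot\theta)\,d\theta$ together with the fact that $1-\hat p(\theta)\sim|\theta|^2/(2d)$ at the sole singular point $\theta=0$. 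Since this is a textbook result, I would simply cite it (e.g.\ Spitzer or Lawler--Limic).

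For \eqref{1.16}, I would start from $g(0)=\sum_{n\geq 0}P_0[X_n=0]$ and extract the first two non-trivial terms exactly. One has $P_0[X_0=0]=1$, $P_0[X_1=0]=0$, and $P_0[X_2=0]=1/(2d)$ (the first step lands at some $\pm e_j$ and must be reversed). The goal is then to show $\sum_{n\geq 4}P_0[X_n=0]=O(d^{-2})=o(d^{-1})$. A direct combinatorial count already gives $P_0[X_4=0]=\bigl(6d+12d(d-1)\bigr)/(2d)^4=\tfrac{3}{4d^2}+O(d^{-3})$; for $n=2k\geq 6$ I would use the fact that any returning path visits each used coordinate direction an even number of times, hence can involve at most $k$ distinct directions, and combine this combinatorial factor $\binom{d}{\le k}/(2d)^{2k}$ with the local-CLT bound $P_0[X_{2k}=0]\leq C(d/k)^{d/2}$ for large $k$ to conclude that the tail sum is $O(d^{-2})$.

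For \eqref{1.17}, I would use the natural decomposition $X_n=(Y_n,Z_n)\in\mathbb{Z}^3\times\mathbb{Z}^{d-3}$: at each step the walk updates one of the first three coordinates with probability $3/d$ and one of the last $d-3$ with probability $(d-3)/d$. Identifying $\M$ with $\mathbb{Z}^3\times\{0\}^{d-3}$, the event $\{\widetilde H_{\M}=\infty\}$ is exactly $\{Z_n\neq 0\text{ for all }n\geq 1\}$. If the first step is a $Y$-step then $Z_1=0$ and the event fails, while conditionally on the first step being a $Z$-step (probability $(d-3)/d$) the $Z$-process, observed at the times it actually moves, is itself a simple random walk on $\mathbb{Z}^{d-3}$ starting from a neighbour of the origin, which thereafter must avoid $0$ forever. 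Using the classical identity $P_0[\widetilde H_{\{0\}}=\infty]=1/g_{d-3}(0)$ for SRW on $\mathbb{Z}^{d-3}$, this gives
\begin{equation*}
P_0[\widetilde H_{\M}=\infty]=\frac{d-3}{d}\cdot\frac{1}{g_{d-3}(0)}.
\end{equation*}
Inserting \eqref{1.16} at dimension $d-3$ yields $1/g_{d-3}(0)=1-\tfrac{1}{2d}+o(d^{-1})$, and multiplication by $1-3/d$ produces $1-\tfrac{7}{2d}+o(d^{-1})$, which is \eqref{1.17}.

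The only genuinely delicate step is the tail estimate $\sum_{n\geq 4}P_0[X_n=0]=o(d^{-1})$ entering \eqref{1.16}: the local-CLT bound alone is not quite sharp enough uniformly in $n$, so one must combine it with the combinatorial constraint that short returning paths necessarily re-use coordinate directions, which is what produces the extra factor of $d^{-1}$ beyond the leading $n=2$ contribution. Everything else reduces either to a standard citation or to the clean product decomposition explained above.
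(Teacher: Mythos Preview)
Your argument for \eqref{1.17} is essentially the paper's: both reduce to the escape probability of simple random walk on $\mathbb{Z}^{d-3}$ by projecting $X$ onto its last $d-3$ coordinates. The only cosmetic difference is that the paper works directly with the lazy walk $Y=\pi\circ X$ on $\mathbb{Z}^{d-3}$ and invokes the Green-function identity $g^{(Y)}(0)=\tfrac{d}{d-3}\,g^{(d-3)}(0)$ to get $P_0[\widetilde H_{\mathbb{Z}^3}=\infty]=[g^{(Y)}(0)]^{-1}$, whereas you condition on whether the first step moves a $Z$-coordinate; both routes give the same product $\tfrac{d-3}{d}\cdot g^{(d-3)}(0)^{-1}$, and the expansion via \eqref{1.16} is identical. (One small wrinkle in your write-up: you describe the subordinated $Z$-walk as ``starting from a neighbour of the origin'' but then quote $P_0[\widetilde H_{\{0\}}=\infty]$; these coincide by the one-step decomposition, so the formula is right, but the connection could be made explicit.)

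For \eqref{1.15} and \eqref{1.16} the paper does not argue at all but simply cites Lawler and Montroll, respectively. Your sketches go beyond this; the outline for \eqref{1.16} (exact evaluation of the $n=0,2$ terms plus a tail bound $\sum_{n\ge 4}P_0[X_n=0]=O(d^{-2})$) is the standard route, and your caveat that the tail estimate is the only delicate point is accurate.
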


\begin{proof}
For \eqref{1.15}, see \cite{L}, Theorem 1.5.4, for \eqref{1.16}, see \cite{Mo}, pp. 246-247. In order to prove \eqref{1.17}, we assume that $d \geq 6$ and define $\pi: \mathbb{Z}^d \longrightarrow \mathbb{Z}^{d-3}:$ $(x^1,\dots,x^d) \mapsto (x^4,\dots,x^d).$ Then, under $P_0$,
\begin{equation*}
Y_n \stackrel{\text{def.}}{=} \pi \circ X_n, \qquad \text{for all } n \geq 0,
\end{equation*}
is a ``lazy'' walk on $\mathbb{Z}^{d-3}$ starting at the origin. Clearly, $\{ \widetilde{H}_{\M} = \infty \} = \{ \widetilde{H}^{(Y)}_0 = \infty \}$, where $\widetilde{H}^{(Y)}_0$ refers to the first return to $0$ for the walk $Y$. Hence,
\begin{align*}
P_0 [\widetilde{H}_{\M} = \infty] = \big[g^{(Y)}(0) \big]^{-1} = \Big[ \frac{d}{d-3} \cdot g^{(d-3)}(0) \Big]^{-1} \stackrel{\eqref{1.16}}{=}  1 - \frac{7}{2d} + o(d^{-1}),
\end{align*}
as $d \to \infty$, where $g^{(Y)}(\cdot)$ denotes the Green function of $Y$ and $g^{(d-3)}(\cdot)$ that of simple random walk on $\mathbb{Z}^{d-3}$.
\end{proof}

We now turn to the Gaussian free field on $\mathbb{Z}^d$, as defined in \eqref{phi}. Given any subset $K \subset \mathbb{Z}^d$, we frequently write $\varphi_{_K}$ to denote the family $(\varphi_x)_{x \in K}$. For arbitrary $a \in \mathbb{R}$ and $K \subset \subset \mathbb{Z}^d$, we also use the shorthand $\{ \varphi_{\vert_K} > a \}$ for the event $\{ \min \{ \varphi_x  ; \ x \in K\} > a \}$ and similarly $\{ \varphi_{\vert_K} < a \}$ instead of $\{ \max \{ \varphi_x ; \ x \in K\} < a \}$. Next, we introduce certain crossing events for the Gaussian free field. To this end, we first consider the space $\Omega = \{0,1 \}^{\mathbb{Z}^d}$ endowed with its canonical $\sigma$-algebra and define, for arbitrary disjoint subsets $K, K' \subset \mathbb{Z}^d$, the event (subset of $\Omega$)
\begin{equation} \label{1.18}
\begin{split} 
\{ K \longleftrightarrow K' \} = \{&\text{there exists an open path (i.e. along which the} \\
&\text{configuration has value $1$) connecting $K$ and $K'$} \}.
\end{split}
\end{equation} 
For any level $h \in \mathbb{R}$, we write $\Phi^h$ for the measurable map from $\mathbb{R}^{\mathbb{Z}^d}$ into $\Omega$ which sends $\varphi \in \mathbb{R}^{\mathbb{Z}^d}$ to $\big( 1\{ \varphi_x \geq h \}\big)_{x \in \mathbb{Z}^d} \in \Omega$, and define
\begin{equation} \label{1.19}
\{ K \stackrel{\geq h}{\longleftrightarrow} K' \} = ( \Phi^h )^{-1} \big( \{ K \longleftrightarrow K' \} \big)
\end{equation}
(a measurable subset of $\mathbb{R}^{\mathbb{Z}^d}$ endowed with its canonical $\sigma$-algebra $\mathcal{F}$), which is the event that $K$ and $K'$ are connected by a (nearest-neighbor) path in $E_\varphi^{\geq h}$, c.f. \eqref{Ephi}. Denoting by $Q^h$ the image of $\P$ under $\Phi^h$, i.e. the law of $\big( 1\{ \varphi_x \geq h \}\big)_{x \in \mathbb{Z}^d}$ on $\Omega$, we have that $\P [K \stackrel{\geq h}{\longleftrightarrow} K'] = Q^h [ K \longleftrightarrow K'].$ Note that $\{ K \stackrel{\geq h}{\longleftrightarrow} K' \}$ is an increasing event upon introducing on $\mathbb{R}^{\mathbb{Z}^d}$ the natural partial order (i.e. $f \leq f'$ when $f_x \leq f_x'$ for all $x \in \mathbb{Z}^d$).

We proceed with a classical fact concerning conditional distributions for the Gaussian free field on $\mathbb{Z}^d$. We could not find a precise reference in the literature, and include a proof for the Reader's convenience. We first define, for $U \subset \mathbb{Z}^d$, the law $\P^U$ on $\mathbb{R}^{\mathbb{Z}^d}$ of the centered Gaussian field with covariance
\begin{equation} \label{P_K}
\mathbb{E}^U[\varphi_x \varphi_y] = g_{U} (x,y), \qquad \text{ for all } x,y \in \mathbb{Z}^d,
\end{equation}
with $g_{U} (\cdot, \cdot)$ given by \eqref{GreenFunctionSubK}. In particular, $\varphi_x = 0$, $\P^U$-almost-surely, whenever $x \in K= U^c$. We then have

\begin{lemma} \label{L1.2} $\quad$
\medskip

\noindent Let $ \emptyset \neq K \subset \subset \mathbb{Z}^d$, $ U=K^c $ and define $(\widetilde{\varphi}_x)_{x \in \mathbb{Z}^d}$ by
\vspace{-0.5ex}
\begin{equation} \label{mudecomp}
\varphi_x = \widetilde{\varphi}_x + \mu_x, \qquad \text{for } x \in \mathbb{Z}^d,
\end{equation}

\vspace{-0.5ex}
\noindent where $\mu_x$ is the $\sigma(\varphi_x ; x \in K)$-measurable map defined as
\vspace{-0.5ex}
\begin{equation} \label{mu}
\mu_x = E_x [H_{K} < \infty , \varphi_{X_{H_{K}}}] = \sum_{y \in K } P_x [H_{K} < \infty , X_{H_{K}} = y] \cdot \varphi_y, \qquad \text{for } x \in \mathbb{Z}^d.
\end{equation}

\vspace{-1.5ex}
\noindent Then, under $\P$,
\vspace{-0.5ex}
\begin{equation} \label{ind+cond_exps}
(\widetilde{\varphi}_x)_{x \in \mathbb{Z}^d}  \text{ is independent from $\sigma(\varphi_x ; x \in K)$, and distributed as $(\varphi_x)_{x \in \mathbb{Z}^d}$ under $\P^{U}$.}
\end{equation}
\end{lemma}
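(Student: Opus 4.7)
The structure is entirely Gaussian, so the strategy is to verify that $\widetilde\varphi$ has the correct covariance structure and is uncorrelated with $(\varphi_y)_{y\in K}$; joint Gaussianity then upgrades decorrelation to independence. The key identity I will rely on is \eqref{G-GsubK}, which precisely expresses $g - g_U$ as the harmonic measure average of $g(\cdot, y)$ used in the definition of $\mu$.

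First I would observe that if $x \in K$, then $H_K = 0$ and $X_{H_K} = x$, $P_x$-almost surely, so $\mu_x = \varphi_x$ and hence $\widetilde\varphi_x = 0$ identically on $K$. In particular this matches the fact that a field with covariance $g_U$ vanishes on $K$, since $g_U(x,\cdot) \equiv 0$ whenever $x \notin U$. Next, for arbitrary $x, y \in \mathbb{Z}^d$, I would compute
\begin{equation*}
\E[\widetilde\varphi_x \, \varphi_y] = g(x,y) - \sum_{z \in K} P_x[H_K < \infty, X_{H_K} = z]\, g(z, y) = g(x,y) - E_x[H_K<\infty, g(X_{H_K}, y)],
\end{equation*}
which by \eqref{G-GsubK} equals $g_U(x,y)$. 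Since $g_U(x,y) = 0$ as soon as $y \in K$, this gives the decorrelation
\begin{equation*}
\E[\widetilde\varphi_x \, \varphi_y] = 0 \quad \text{for all } x \in \mathbb{Z}^d, \; y \in K.
\end{equation*}

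Now $\widetilde\varphi$ and $(\varphi_y)_{y \in K}$ are jointly Gaussian (both are linear functionals of the underlying Gaussian field $\varphi$, using \eqref{mu} for $\mu$), so decorrelation implies the asserted independence. To identify the law of $\widetilde\varphi$, I compute its covariance: since $\mu_y$ is $\sigma(\varphi_z; z\in K)$-measurable, the independence just shown yields $\E[\widetilde\varphi_x \, \mu_y] = 0$, hence
\begin{equation*}
\E[\widetilde\varphi_x \, \widetilde\varphi_y] = \E[\widetilde\varphi_x \, \varphi_y] - \E[\widetilde\varphi_x \, \mu_y] = g_U(x,y),
\end{equation*}
which, together with centeredness and Gaussianity, characterizes the law $\P^U$ defined in \eqref{P_K}.

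There is no real obstacle here; the only point deserving a sentence of care is the justification that $\mu$, defined via \eqref{mu} as an $E_x$-expectation involving $\varphi_{X_{H_K}}$, is in fact a convergent $\sigma(\varphi_x; x \in K)$-measurable linear combination of the $\varphi_y$'s. This is immediate from the second equality in \eqref{mu} because $K$ is finite, so the sum is finite and there is no issue of convergence or measurability. Once this is noted, the Gaussian computations above go through verbatim.
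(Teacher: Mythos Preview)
Your proof is correct and, for the identification of the law of $\widetilde\varphi$, more direct than the paper's. Both the paper and you establish independence in exactly the same way: compute $\E[\widetilde\varphi_x\,\varphi_y]$ using \eqref{G-GsubK} and invoke joint Gaussianity. Where you diverge is in showing that $\widetilde\varphi$ has covariance $g_U$. You extend the same covariance computation to all $x,y\in\mathbb{Z}^d$ (so $\E[\widetilde\varphi_x\,\varphi_y]=g_U(x,y)$ in general), then use the already-established independence to kill the cross-term $\E[\widetilde\varphi_x\,\mu_y]$, yielding $\E[\widetilde\varphi_x\,\widetilde\varphi_y]=g_U(x,y)$ directly. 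The paper instead identifies the law via a finite-volume approximation: it reduces to cylinder events by Dynkin's lemma, invokes an analogous statement for finite graphs (Proposition 2.3 of \cite{S4}), and passes to the limit $V\nearrow\mathbb{Z}^d$. Your argument is shorter and self-contained, avoiding both the external reference and the limiting procedure; the paper's route has the advantage of deducing the infinite-volume statement from a previously recorded finite-volume one, but at the cost of more machinery than is strictly needed here.
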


\begin{proof} Note that for all $x \in K$, $\widetilde{\varphi}_x = 0$ (since $\mu_x = \varphi_x$ for $x \in K$, by \eqref{mu}) and that for all $x \in K$, $\varphi_x = 0$, $\P^{U}$-almost surely. Hence, it suffices to consider $(\widetilde{\varphi}_x)_{x\in U}$. We first show independence. By \eqref{mu}, $(\widetilde{\varphi}_x)_{x \in U}$, $(\varphi_y)_{y \in K}$, are centered and jointly Gaussian. Moreover, they are uncorrelated, since for $x \in U$, $y \in K$,
\begin{equation*}
\E [\widetilde{\varphi}_x \varphi_y] = \E [\varphi_x \varphi_y] - \E[\mu_x \varphi_y] \stackrel{\eqref{phi},\eqref{mu}}{=} g(x,y) - \sum_{z \in K}P_x [H_{K} < \infty , X_{H_{K}} = z] g(z,y) \stackrel{\eqref{G-GsubK}}{=} 0.
\end{equation*}
Thus, $(\widetilde{\varphi}_x)_{x \in U}$, $(\varphi_y)_{y \in K}$, are independent. To conclude the proof of Lemma \ref{L1.2}, it suffices to show that
\begin{equation} \label{L1.2.1}
\E \big[1_A \big( (\widetilde{\varphi}_x)_{x \in U} \big)\big] = \E^{U} \big[ 1_A \big( (\varphi_x)_{x \in U}\big) \big], \qquad \text{for all } A \in \F_U,
\end{equation}
where $ \F_U $ stands for the canonical $\sigma$-algebra on $ \mathbb{R}^U $. Furthermore, choosing some ordering $(x_i)_{i \geq 0}$ of $U$, by Dynkin's Lemma, it suffices to assume that $A$ has the form
\begin{equation} \label{L1.2.2}
A= A_{x_0} \times \cdots \times A_{x_n} \times \mathbb{R}^{U \setminus \{x_0, \dots,x_n \}}, \quad \text{for some } n \geq 0 \text{ and } A_{x_i} \in \mathcal{B}(\mathbb{R}), \  i=0,\dots,n.
\end{equation}
We fix some $A$ of the form \eqref{L1.2.2}, and consider a subset $V$ such that $K\cup\{x_0,\dots,x_n \} \subseteq V \subset \subset \mathbb{Z}^d$ (we will soon let $ V $ increase to $ \mathbb{Z}^d $). We let $P^V_x$, $x \in V$, denote the law of simple random walk on $V$ starting at $x$ killed when exiting $V$ (its Green function corresponds to $g_{V}(\cdot, \cdot)$), and define $\widetilde{\varphi}_x^V$ for $x \in V$ as in \eqref{mudecomp} but with $P_x^V$ replacing $P_x$ in the definition \eqref{mu} of $\mu_x$. It then follows from Proposition 2.3 in \cite{S4} (an analogue of the present lemma for \textit{finite} graphs) that
\begin{equation} \label{L1.2.3}
 \E^V \big[1_{A^V} \big( (\widetilde{\varphi}_x^V)_{x \in V \setminus K} \big)\big] =  \E^{V\setminus K} \big[ 1_{A^V} \big( (\varphi_x)_{x \in V\setminus K}\big) \big],
\end{equation}
with $\P^V$, $\P^{V\setminus K}$ as defined in \eqref{P_K} and $A^V = A_{x_0} \times \cdots \times A_{x_n} \times \mathbb{R}^{V \setminus ( K \cup \{x_0, \dots,x_n \})}$. Letting $V \nearrow \mathbb{Z}^d$, it follows that  $g_{V}(x,y) \nearrow g(x,y)$, $g_{V\setminus K}(x,y) \nearrow g_{U}(x,y)$, hence by dominated convergence that both sides of \eqref{L1.2.3} converge towards the respective sides of \eqref{L1.2.1}, thus completing the proof.
\end{proof}

\begin{remark} \label{R1.3}$\quad$
\smallskip 

\noindent Lemma \ref{L1.2} yields a choice of regular conditional distributions for $(\varphi_x)_{x \in \mathbb{Z}^d}$ conditioned on the variables $(\varphi_x)_{x \in K}$, which is tailored to our future purposes. Namely, $\P$-almost surely,
\begin{equation} \label{phi_cond_exps}
\P \big[ (\varphi_x)_{x \in \mathbb{Z}^d} \in \cdot \  \big\vert (\varphi_x)_{x \in K} \big] = \widetilde{\P} \big[ (\widetilde{\varphi}_x + \mu_x)_{x \in \mathbb{Z}^d} \in \cdot \ \big], 
\end{equation}
where $\mu_x$, $x \in \mathbb{Z}^d$ is given by \eqref{mu}, $\widetilde{\P}$ does not act on $(\mu_x)_{x \in \mathbb{Z}^d}$, and $(\widetilde{\varphi}_x)_{x\in \mathbb{Z}^d}$ is a centered Gaussian field under $\widetilde{\P}$, with $\widetilde{\varphi}_x = 0$, $\widetilde{\P}$-almost surely for $x \in K$. Lemma \ref{L1.2} also provides the covariance structure of this field (namely $ g_U(\cdot,\cdot) $, with $ U=K^c $), but its precise form will be of no importance in what follows. Note that conditioning on $(\varphi_x)_{x \in K}$ produces the (random) \textit{shift} $\mu_x$, which is \textit{linear} in the variables $\varphi_y$, $y \in K$. \hfill $\square$ 
\end{remark}

The explicit form of the conditional distributions in \eqref{phi_cond_exps} readily yields the following result, which can be viewed as a consequence of the FKG-inequality for the free field (see for example \cite{GHM}, Chapter 4).

\begin{lemma} \label{FKG}$\quad$
\smallskip

\noindent Let $\alpha \in \mathbb{R}$, $\emptyset \neq K \subset \subset \mathbb{Z}^d$, and assume $A \in \mathcal{F}$ (the canonical $\sigma$-algebra on $\mathbb{R}^{\mathbb{Z}^d}$) is an increasing event. Then
\begin{equation} \label{FKG_ineq}
\P\big[A  \big| \ \varphi_{|_{K}}=  \alpha \big] \ \leq \ \P \big[A  \big| \ \varphi_{|_{K}} \geq \alpha \big],
\end{equation}
where the left-hand side is defined by the version of the conditional expectation in \eqref{phi_cond_exps}.
\end{lemma}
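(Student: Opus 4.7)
The plan is to exploit the explicit form of the regular conditional distribution recorded in \eqref{phi_cond_exps}, together with the fact that the shift $\mu_x$ is a \emph{linear} function of $(\varphi_y)_{y \in K}$ with \emph{non-negative} coefficients, since the latter are the probabilities $P_x[H_K < \infty, X_{H_K} = y]$.

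Concretely, for $\beta = (\beta_y)_{y \in K} \in \mathbb{R}^K$, set
\begin{equation*}
\mu_x(\beta) = \sum_{y \in K} P_x[H_K < \infty, X_{H_K} = y]\, \beta_y, \qquad x \in \mathbb{Z}^d,
\end{equation*}
and define $F(\beta) = \widetilde{\P}\big[(\widetilde{\varphi}_x + \mu_x(\beta))_{x \in \mathbb{Z}^d} \in A \big]$. By \eqref{phi_cond_exps}, $F(\varphi|_K)$ is a version of $\P[A \mid (\varphi_y)_{y\in K}]$, and in particular $\P[A \mid \varphi|_K = \alpha] = F(\alpha \mathbf{1})$, where $\alpha \mathbf{1}$ denotes the constant vector equal to $\alpha$ on $K$. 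The key observation is that if $\beta \geq \alpha \mathbf{1}$ coordinatewise, then the non-negativity of the coefficients gives $\mu_x(\beta) \geq \mu_x(\alpha \mathbf{1})$ for every $x$, hence
\begin{equation*}
(\widetilde{\varphi}_x + \mu_x(\beta))_{x \in \mathbb{Z}^d} \geq (\widetilde{\varphi}_x + \mu_x(\alpha \mathbf{1}))_{x \in \mathbb{Z}^d}
\end{equation*}
pointwise, $\widetilde{\P}$-almost surely. Since $A$ is increasing, this yields $F(\beta) \geq F(\alpha \mathbf{1})$.

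It then remains to integrate: by definition of conditional probability and the previous monotonicity,
\begin{equation*}
\P\big[A \mid \varphi|_K \geq \alpha\big] = \frac{\E\big[1_{\{\varphi|_K \geq \alpha\}} F(\varphi|_K)\big]}{\P[\varphi|_K \geq \alpha]} \geq \frac{\E\big[1_{\{\varphi|_K \geq \alpha\}} F(\alpha \mathbf{1})\big]}{\P[\varphi|_K \geq \alpha]} = F(\alpha \mathbf{1}) = \P\big[A \mid \varphi|_K = \alpha\big],
\end{equation*}
which is exactly \eqref{FKG_ineq}.

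There is no real obstacle here beyond being careful with the conditioning: the only technical point is to use the version of the conditional distribution provided by Lemma \ref{L1.2}, so that $\P[A \mid \varphi|_K = \alpha]$ is literally the deterministic quantity $F(\alpha \mathbf{1})$ (with $\widetilde{\varphi}$ independent of $\varphi|_K$), rather than an almost-surely defined object; this makes the pointwise comparison on the event $\{\varphi|_K \geq \alpha\}$ unambiguous. No appeal to the full FKG inequality is required — only the positivity of the hitting distribution coefficients together with the linearity of the shift in \eqref{mu}.
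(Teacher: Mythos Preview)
Your proof is correct and follows essentially the same approach as the paper: both use the explicit conditional distribution from Lemma \ref{L1.2}, the non-negativity of the coefficients $P_x[H_K < \infty, X_{H_K} = y]$ to get $\mu_x(\varphi|_K) \geq \mu_x(\alpha\mathbf{1})$ on $\{\varphi|_K \geq \alpha\}$, and then integrate against $\P[\,\cdot \mid \varphi|_K \geq \alpha]$. Your introduction of the function $F(\beta)$ is a slightly cleaner way to package the same argument.
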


\noindent Intuitively, augmenting the field can only favor the occurrence of $A$, an increasing event.

\begin{proof}
On the event $\big\{ \varphi_{\vert_{K}} \geq \alpha \big\}$, we have, for $\mu_x$, $x \in \mathbb{Z}^d$, as defined in \eqref{mu}, 
\begin{equation} \label{m}
\mu_x = \sum_{y \in K} \varphi_y P_x [H_{K} < \infty , X_{H_{K}} = y] \geq \alpha P_x[H_{K} < \infty] \stackrel{\text{def.}}{=} m_x(\alpha), \qquad \text{for all } x\in \mathbb{Z}^d,
\end{equation}
with equality instead on the event $\big\{ \varphi_{\vert_{K}} = \alpha \big\}$. Since $A$ is increasing, this yields, with a slight abuse of notation,
\begin{equation*}
\begin{array}{lcl}
\mathbb{P} [A  | \varphi_{\vert_{K}}  =  \alpha] \cdot 1_{\{ \varphi_{\vert_{K}} \geq \alpha \}} \hspace{-1ex} & \stackrel{\eqref{phi_cond_exps}}{=} &  \hspace{-1ex}  \widetilde{ \mathbb{P} } \big[ A \big( (\widetilde{\varphi}_x + m_x(\alpha))_{x \in \mathbb{Z}^d} \big)\ \big] \cdot 1_{\{ \varphi_{\vert_{K}} \geq \alpha \}}  \\
& \leq & \hspace{-1ex}  \widetilde{ \mathbb{P} } \big[ A \big( (\widetilde{\varphi}_x + \mu_x)_{x \in \mathbb{Z}^d} \big)\ \big] \cdot 1_{\{ \varphi_{\vert_{K}} \geq \alpha \}}  \\
& \stackrel{\eqref{phi_cond_exps}}{=} & \hspace{-1ex} \mathbb{P} \big[A  \big| \varphi_{\vert_{K}}\big] \cdot 1_{\{ \varphi_{\vert_{K}} \geq \alpha \}}.
\end{array}
\end{equation*}
Integrating both sides with respect to the probability measure $\nu(\cdot)\stackrel{\text{def.}}{=} \P [ \ \cdot \ | \varphi_{\vert_{K}} \geq \alpha]$, we obtain
\begin{equation*}
\mathbb{P} [A  | \varphi_{\vert_{K}} =  \alpha] \leq E_\nu \big[\mathbb{P} \big[A  | \varphi_{\vert_{K}}\big] \cdot 1_{\{ \varphi_{\vert_{K}} \geq \alpha \}} \big] = \P \big[A  \big| \ \varphi_{|_{K}} \geq \alpha \big].
\end{equation*}
This completes the proof of Lemma \ref{FKG}.
\end{proof}

We now introduce the canonical shift $\tau_z$ on $\mathbb{R}^{\mathbb{Z}^d}$, such that $\tau_z (f)(\cdot)= f(\cdot + z)$, for arbitrary $f \in \mathbb{R}^{\mathbb{Z}^d}$ and $z \in \mathbb{Z}^d$. The measure $\P$ is invariant under $\tau_z$, i.e. $\P[\tau_z^{-1}(A)]= \P[A]$, for all $A \in \F$ (the canonical $\sigma$-algebra on $\mathbb{R}^{\mathbb{Z}^d}$), by translation invariance of $g(\cdot,\cdot)$ (see below \eqref{GreenFunction}), and has the following mixing property:
\begin{equation} \label{MIXING}
\lim_{z \to \infty} \P[A \cap \tau_z^{-1}(B) ] = \P[A] \, \P[B], \qquad \text{for all }A,B \in \F
\end{equation}
(one first verifies \eqref{MIXING} for $A,B$ depending on finitely many coordinates with the help of \eqref{1.15} and the general case follows by approximation, see \cite{CL}, pp.157-158). The following lemma gives a $0$-$1$ law for the probability of existence of an infinite cluster in $E_\varphi^{\geq h}$, the excursion set above level $h \in \mathbb{R}$, c.f. \eqref{Ephi}.   

\begin{lemma} \label{perc_prop}$\quad$
\smallskip

\noindent Let $\Psi(h)= \P[E_{\varphi}^{\geq h} \text{ contains an infinite cluster }]$, for arbitrary $h \in \mathbb{R}$. One then has the following dichotomy:
\begin{equation} \label{dich}
\begin{split}
\Psi (h) = \left \{
\begin{array}{rl}
 0, & \text{ if  } \eta(h)=0, \\
 1, & \text{ if  } \eta(h)>0,
\end{array}
\right.
\end{split}
\end{equation}
where $\eta(h)= \P[0 \stackrel{\geq h}{\longleftrightarrow} \infty]$.  In particular, recalling the definition \eqref{h*} of $h_{*}$, \eqref{dich} implies that $\Psi(h)=1$ for all $h< h_*$, and $\Psi(h)=0$ for all $h>h_*$.
\end{lemma}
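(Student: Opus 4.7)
The plan is to split the dichotomy \eqref{dich} into its two implications and argue each separately, and then to deduce the ``in particular'' statement from the definition \eqref{h*} of $h_*$ combined with monotonicity of $\eta$.

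For the first implication, $\eta(h)=0 \Rightarrow \Psi(h)=0$, I would use the translation invariance of $\P$ under the shifts $(\tau_z)_{z \in \mathbb{Z}^d}$ (noted above \eqref{MIXING}) to see that $\P[x \stackrel{\geq h}{\longleftrightarrow} \infty] = \eta(h) = 0$ for every $x \in \mathbb{Z}^d$. Since
\begin{equation*}
\{E_\varphi^{\geq h} \text{ contains an infinite cluster}\} \;=\; \bigcup_{x \in \mathbb{Z}^d} \{x \stackrel{\geq h}{\longleftrightarrow} \infty\},
\end{equation*}
a countable union bound gives $\Psi(h) = 0$.

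For the second implication, $\eta(h)>0 \Rightarrow \Psi(h)=1$, the key observation is that the event $A = \{E_\varphi^{\geq h} \text{ contains an infinite cluster}\}$ depends only on the ``tail'' of the configuration in the sense that it is translation-invariant: $\tau_z^{-1}(A) = A$ for every $z \in \mathbb{Z}^d$. Applying the mixing property \eqref{MIXING} with $B = A$ yields $\P[A] = \lim_{z \to \infty} \P[A \cap \tau_z^{-1}(A)] = \P[A]^2$, which forces $\P[A] \in \{0,1\}$. The trivial bound $\Psi(h) \geq \eta(h) > 0$ then settles $\Psi(h)=1$. This step, deriving a $0$-$1$ law from \eqref{MIXING} for the translation-invariant event $A$, is the main (and only nontrivial) ingredient; the rest is bookkeeping.

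For the ``in particular'' claim, I would argue purely from \eqref{h*} and the monotonicity of $\eta$. If $h < h_*$, then $h \notin \{h' : \eta(h')=0\}$ (otherwise $h_* \leq h$), so $\eta(h) > 0$ and the above yields $\Psi(h)=1$. If $h > h_*$, by the infimum definition one can pick $h' \in [h_*,h)$ with $\eta(h')=0$, and monotonicity of $\eta$ gives $\eta(h) \leq \eta(h') = 0$, hence $\Psi(h)=0$ by the first implication. This completes the proof sketch; no substantial obstacles arise beyond recognizing that \eqref{MIXING} supplies the ergodicity needed for the $0$-$1$ alternative.
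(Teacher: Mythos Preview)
Your proof is correct and is precisely an unpacking of the paper's one-line argument (``This follows by ergodicity, which is itself a consequence of the mixing property \eqref{MIXING}''): you use \eqref{MIXING} applied to the translation-invariant event $A$ to derive $\P[A]\in\{0,1\}$, and then link this to $\eta(h)$ via the inclusion $\{0 \stackrel{\geq h}{\longleftrightarrow} \infty\}\subset A$ and the union bound, exactly as intended.
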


\begin{proof}
This  follows by ergodicity, which is itself a consequence of the mixing property \eqref{MIXING}.
\end{proof}

\begin{remark} \label{uniqueness}$\quad$
\smallskip

\noindent When $\Psi(h)=1$, in particular in the supercritical regime $h< h_*$, the infinite cluster in $E_{\varphi}^{\geq h}$ is $\P$-almost surely unique. This follows by Theorem 12.2 in \cite{HJ} (Burton-Keane theorem), because the field $\big( 1\{ \varphi_x \geq h \}\big)_{x \in \mathbb{Z}^d}$ is translation invariant and has the finite energy property (see \cite{HJ}, Definition 12.1). \hfill $\square$
\end{remark}

\section{Non-trivial phase transition} \label{NONTRIVIAL PT}

The main goal of this section is the proof of Theorem \ref{T2.6} below, which roughly states that $h_{**}(d)$ (and hence $h_{*}(d)$, c.f. Corollary \ref{C2.7}) is finite for all $d \geq 3$, and that the connectivity function of $E_\varphi^{\geq h}$, c.f. \eqref{Ephi}, has stretched exponential decay for arbitrary $h > h_{**}$. The proof involves a certain renormalization scheme akin to the one developed in \cite{SS} and \cite{S3} in the context of random interlacements. This scheme will be used to derive recursive estimates for the probabilities of certain crossing events, c.f. Proposition \ref{P2.2}, which can subsequently be propagated inductively, c.f. Proposition \ref{P2.5}. The proper initialization of this induction requires a careful choice of the parameters occurring in the renormalization scheme. The resulting bounds constitute the main tool for the proof of the central Theorem \ref{T2.6}. In addition, an extension of Proposition \ref{P2.2} can be found in Remark \ref{R2.3} 2).

We begin by defining on the lattice $\mathbb{Z}^d$ a sequence of length scales 
\begin{equation} \label{2.1}
L_n = l_0^n L_0, \quad \text{for $n \geq 0$,}
\end{equation}
where $L_0\geq 1$ and $l_0 \geq 100$ are both assumed to be integers and will be specified below. Hence, $L_0$ represents the finest scale and $L_1 < L_2 < \dots$ correspond to increasingly coarse scales. We further introduce renormalized lattices
\begin{equation} \label{2.2}
\IL_n =  L_n \mathbb{Z}^{d} \subset \mathbb{Z}^d, \qquad n \geq 0,
\end{equation}
and note that $ \IL_k \supseteq \IL_n$ for all $0 \leq k \leq n$. To each $x \in \IL_n$, we attach the boxes
\begin{equation} \label{2.3}
B_{n,x} \stackrel{\text{def.}}{=} B_x (L_n), \qquad \text{for }  n \geq 0, \ x \in \IL_n,
\end{equation}
where we define $B_x (L) = x + \big( [0, L ) \cap \mathbb{Z} \big)^d$, the box of side length $L$ \textit{attached to} $x$, for any $x \in \mathbb{Z}^d$ and $L \geq 1$ (not to be confused with $B(x, L)$). Moreover, we let
\begin{equation} \label{2.4}
\widetilde{B}_{n,x} =  \bigcup_{y \in \mathbb{L}_n : \ d(B_{n,y}, B_{n,x}) \leq 1} B_{n,y}, \qquad \ n \geq 0, \ x \in \IL_n,
\end{equation}
so that $\{ B_{n,x} ; \ x \in \IL_n \}$ defines a partition of $\mathbb{Z}^d$ into boxes of side length $L_n$ for all $n \geq 0$, and $\widetilde{B}_{n,x}$, $x \in \IL_n$, is simply the union of $B_{n,x}$ and its $*$-neighboring boxes at level $n$. Moreover, for $n \geq 1$ and $x \in \IL_n$, $B_{n,x}$ is the disjoint union of the $l_0^d$ boxes $\{ B_{n-1,y} ; \  y \in B_{n,x} \cap \ \mathbb{L}_{n-1} \}$ at level $n-1$ it contains. We also introduce the indexing sets
\begin{equation} \label{2.5}
\mathcal{I}_n =  \{ n\} \times \IL_n, \qquad n \geq 0,
\end{equation}
and given $(n,x) \in \I_n$, $n \geq 1$, we consider the sets of labels
\begin{equation} \label{2.6}
\begin{split}
&\mathcal{H}_1(n,x) = \big\{ (n-1,y) \in \I_{n-1}   ; \  B_{n-1,y} \subset B_{n,x} \text{ and }  B_{n-1,y} \cap \  \partial^i  B_{n,x} \neq \emptyset \big\}, \\
&\mathcal{H}_2(n,x) = \big\{ (n-1,y) \in \I_{n-1}  ; \  B_{n-1,y} \cap \big\{ z \in \mathbb{Z}^d ; \ d(z,  B_{n,x} ) = [L_{n}/2]  \big\} \neq \emptyset \big\}.
\end{split}
\end{equation}
Note that for any two indices $(n-1,y_i) \in \mathcal{H}_i(n,x)$, $i=1,2$, we have $\widetilde{B}_{n-1,y_1} \cap \widetilde{B}_{n-1,y_2} = \emptyset$ and $\widetilde{B}_{n-1,y_1} \cup \widetilde{B}_{n-1,y_2} \subset \widetilde{B}_{n,x}$. Finally, given $x \in \IL_n$, $n \geq 0$, we introduce $\Lambda_{n,x}$, a family of subsets $\T$ of $\bigcup_{0 \leq k \leq n} \I_k$ (soon to be thought of as binary trees) defined as
\begin{equation} \label{Lambda_nx}
\begin{split}
\Lambda_{n,x} = \Big\{ \T  \subset \bigcup_{k=0}^{n} \I_k \ ; \
&\T \cap \I_n = ( n,x ) \text{ and every $(k,y) \in \T \cap \I_k$, $0 < k \leq n$, has} \\ 
&\text{two `descendants' $(k-1, y_i(k,y)) \in \mathcal{H}_i(k,y) $, $i=1,2$, such} \\
&\text{that } \T \cap \I_{k-1} = \bigcup_{(k,y) \in \T \cap \I_k} \{ (k-1, y_1(k,y)), \ (k-1, y_2(k,y)) \} \Big\}.
\end{split}
\end{equation}
Hence, any $\T \in \Lambda_{n,x}$ can naturally be identified as a binary tree having root $(n,x) \in \I_n$ and depth $n$. Moreover, the following bound on the cardinality of $\Lambda_{n,x}$ is easily obtained,
\begin{equation} \label{2.6.1}
|\Lambda_{n,x}| \leq (cl_0^{d-1})^2 \cdot (cl_0^{d-1})^{2^2} \cdots (cl_0^{d-1})^{2^n} = (cl_0^{d-1})^{2(2^n-1)} \leq (c_0 l_0^{2(d-1)})^{2^n},
\end{equation}
where $c_0 \geq 1$ is a suitable constant. 

We now consider the Gaussian free field $\varphi = (\varphi_x)_{x \in \mathbb{Z}^d}$ on $\mathbb{Z}^d$ defined in \eqref{phi} and introduce the crossing events (c.f. \eqref{1.19} for the notation)
\begin{equation} \label{phievents}
A_{n,x}^h = \{ B_{n,x} \stackrel{\geq h}{\longleftrightarrow}  \partial^i \widetilde{B}_{n,x} \}, \qquad \text{for }h \in \mathbb{R}, \ n\geq 0, \text{ and } x \in \IL_n.
\end{equation}
Three properties of the events $A_{n,x}^h$ will play a crucial role in what follows. Denoting by $\sigma \big( \varphi_y \; ; \; y \in \widetilde{B}_{n,x}\big)$ the $\sigma$-algebra on $\mathbb{R}^{\mathbb{Z}^d}$ generated by the variables $\varphi_y$, $y \in \widetilde{B}_{n,x}$, we have
\begin{align} 
&A_{n,x}^h \in \sigma \big( \varphi_y \; ; \; y \in \widetilde{B}_{n,x}\big), \label{A_meas} \\
&A_{n,x}^h \text{ is increasing (in $\varphi$)} \label{A_inc} \qquad \text{(see the discussion below \eqref{1.19})}, \\
&A_{n,x}^h \supseteq A_{n,x}^{h'}, \  \text{ for all $h,h' \in \mathbb{R}$} \text{  with  } h \leq h'. \label{A_dec_h} 
\end{align}
Indeed, the property \eqref{A_dec_h} that $A_{n,x}^h$ decreases with $h$ follows since $E^{\geq h}_{\varphi} \supseteq E^{\geq h'}_{\varphi}$ for all $h \leq h'$ by definition, c.f. \eqref{Ephi}. Next, we provide a lemma which separates the combinatorial complexity of the number of crossings in $A_{n,x}^h$ from probabilistic estimates, using $\Lambda_{n,x}$ as introduced in \eqref{Lambda_nx}. This separation will be key in obtaining estimates fine enough to yield the desired stretched exponential decay. Albeit being completely analogous to Lemma 2.1 in \cite{S1}, we repeat its proof, for it comprises an essential geometric observation concerning the events $A_{n,x}^h$. 

\begin{lemma} \label{L2.1}
$(n \geq 0$, $(n,x) \in \I_n$, $h \in \mathbb{R})$
\begin{equation} \label{2.7}
\P[A_{n,x}^h] \leq |\Lambda_{n,x} | \sup_{\T \in \Lambda_{n,x}} \P [A_{\T}^h], \qquad \text{where} \qquad A_{\T}^h = \bigcap_{(0,y) \in  \T \cap \I_0} A_{0,y}^h.
\end{equation}
\end{lemma}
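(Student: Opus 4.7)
My plan is to proceed by induction on $n \geq 0$, but in fact the entire claim reduces to the set-theoretic inclusion
\begin{equation*}
A_{n,x}^h \;\subseteq\; \bigcup_{\T \in \Lambda_{n,x}} A_\T^h,
\end{equation*}
after which a plain union bound yields $\P[A_{n,x}^h] \leq \sum_{\T \in \Lambda_{n,x}} \P[A_\T^h] \leq |\Lambda_{n,x}| \sup_\T \P[A_\T^h]$, which is precisely the asserted estimate. The case $n=0$ is trivial, since \eqref{Lambda_nx} forces $\Lambda_{0,x} = \{\{(0,x)\}\}$ and thus $A_\T^h = A_{0,x}^h$, so all the remaining work is to exhibit the inclusion above for $n \geq 1$.

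To this end, I would carry out an iterative ``descent'' along any realizing crossing. Fix $\omega \in A_{n,x}^h$ together with a nearest-neighbor path $\gamma$ in $E_\varphi^{\geq h}(\omega)$ joining $B_{n,x}$ to $\partial^i \widetilde{B}_{n,x}$. Starting from the root $(n,x) \in \I_n$, I construct a tree $\T(\omega) \in \Lambda_{n,x}$ recursively as follows. Given an already selected node $(k,y)$ with $k \geq 1$, endowed with a sub-path $\gamma_{k,y}$ of $\gamma$ which realizes a crossing of $A_{k,y}^h$, I pick the descendant $(k-1,y_1) \in \mathcal{H}_1(k,y)$ to be the label for which $B_{k-1,y_1}$ contains a site of $\gamma_{k,y}$ lying on $\partial^i B_{k,y}$ (such a site exists because $\gamma_{k,y}$ must leave $B_{k,y}$ to reach the still farther set $\partial^i \widetilde{B}_{k,y}$), and the descendant $(k-1,y_2) \in \mathcal{H}_2(k,y)$ to be the label for which $B_{k-1,y_2}$ contains a site of $\gamma_{k,y}$ lying on the mid-shell $\{z \in \mathbb{Z}^d : d(z, B_{k,y}) = [L_k/2]\}$ (such a site exists for the same reason).

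As noted right after \eqref{2.6}, the sets $\widetilde{B}_{k-1,y_1}$ and $\widetilde{B}_{k-1,y_2}$ are disjoint and both contained in $\widetilde{B}_{k,y}$, while the scale separation $l_0 \geq 100$ keeps each of them of $\ell^\infty$-diameter at most $3L_{k-1} = 3L_k/l_0$, which is much smaller than the distance $\gamma_{k,y}$ must travel past $B_{k-1,y_i}$ in order to realize its prescribed crossing. This forces $\gamma_{k,y}$ to exit each $\widetilde{B}_{k-1,y_i}$, so that a suitable sub-path of $\gamma_{k,y}$ lying entirely in $\widetilde{B}_{k-1,y_i}$ realizes $A_{k-1,y_i}^h$; it remains in $E_\varphi^{\geq h}$ simply because it is a sub-path of $\gamma$. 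Iterating this procedure down to $k=0$ produces a tree $\T(\omega) \in \Lambda_{n,x}$ for which $\omega \in A_{0,y}^h$ at every leaf $(0,y) \in \T(\omega) \cap \I_0$, i.e.\ $\omega \in A_{\T(\omega)}^h$, and the inclusion is proved. The main---though mild---technical point is precisely this extraction of two disjoint lower-level sub-crossings out of each $\gamma_{k,y}$, which is where the scale gap $l_0 \geq 100$ enters crucially.
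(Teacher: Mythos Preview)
Your proposal is correct and follows essentially the same route as the paper: both reduce the estimate to the inclusion $A_{n,x}^h \subseteq \bigcup_{\T \in \Lambda_{n,x}} A_\T^h$, and both establish it via the same geometric observation that a crossing of $A_{k,y}^h$ must produce crossings $A_{k-1,y_1}^h$ and $A_{k-1,y_2}^h$ for some $(k-1,y_i) \in \mathcal{H}_i(k,y)$. The only cosmetic difference is that the paper proves the inclusion by a clean induction on $n$ (showing $A_{n,x}^h \subseteq \bigcup A_{n-1,y_1}^h \cap A_{n-1,y_2}^h$ and then invoking the hypothesis), whereas you unroll this induction into an explicit top-down construction of the tree $\T(\omega)$ along a fixed path.
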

\begin{proof}
We use induction on $n$ to show that
\begin{equation} \label{L2.1.1}
A_{n,x}^h \subseteq \bigcup_{\T \in \Lambda_{n,x}} A_{\T}^h, 
\end{equation}
for all $(n,x) \in \I_n$, from which \eqref{2.7} immediately follows. When $n=0$, \eqref{L2.1.1} is trivial. Assume it holds for all $(n-1,y) \in \I_{n-1}$. For any $(n,x) \in \I_n$, a path in $E^{\geq h}_{\varphi}$ starting in $B_{n,x}$ and ending in $\partial^i \widetilde{B}_{n,x}$ must first cross the box $B_{n-1, y_1}$ for some $(n-1, y_1) \in \mathcal{H}_1(n,x)$, and subsequently $B_{n-1, y_2}$ for some $(n-1, y_2) \in \mathcal{H}_2(n,x)$ before reaching $\partial^i \widetilde{B}_{n,x}$, c.f. Figure \ref{fig1} below. Thus,
\begin{equation*}
A_{n,x}^h \subseteq \bigcup_{\substack{(n-1, y_i) \ \in \  \mathcal{H}_i(n,x) \\ i=1,2} } A_{n-1, y_1}^h \cap A_{n-1,y_2}^h.
\end{equation*}
Upon applying the induction hypothesis to $A_{n-1, y_1}^h$ and $A_{n-1, y_2}^h$ separately, the claim \eqref{L2.1.1} follows.
\end{proof}
\begin{figure} [h!] 
\centering
\psfragscanon
\includegraphics[width=10cm]{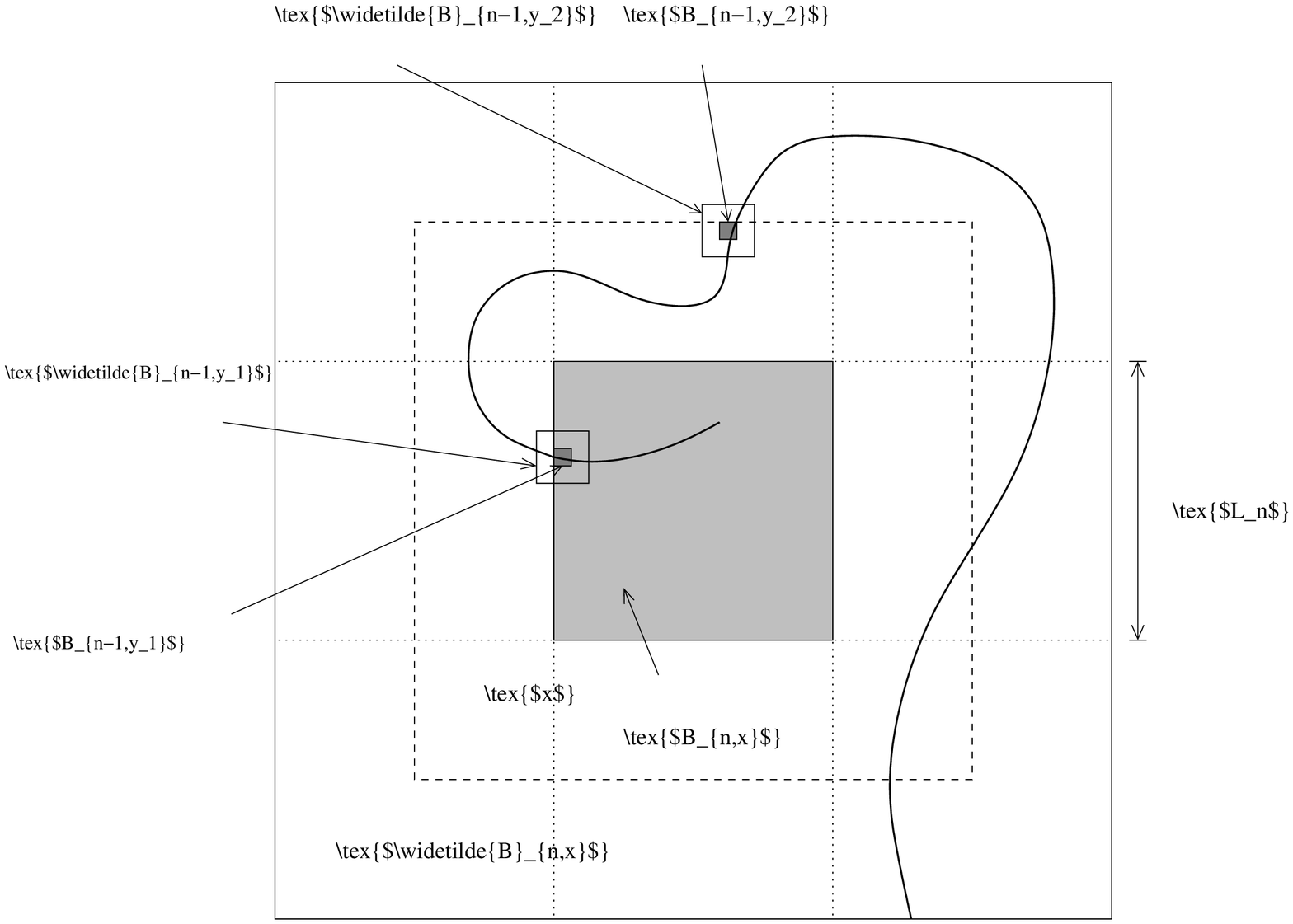} 
\caption{the event $A_{n,x}^h$.}
\label{fig1}
\end{figure}
Before proceeding, we remark that the events $A_{\T}^h$, with $h \in \mathbb{R}$ and $\T \in \Lambda_{n,x}$ for some $(n,x) \in \I_n$, $n \geq0$, defined in \eqref{2.7} inherit certain properties from the events $A_{0,y}^h$, $(0,y) \in \T \cap \I_0$. Namely, it follows from \eqref{A_inc} and \eqref{A_dec_h} that
\begin{equation} \label{A_T_inc}
A_{\T}^h  \text{ is an increasing event (in $\varphi$)},
\end{equation}
and that, for any two levels $h,h' \in \mathbb{R}$,
\begin{equation} \label{A_T_dec_h}
A_{\T}^h \supseteq A_{\T}^{h'} \text{ whenever } h \leq h'. 
\end{equation}
Further, given any $n \geq0$, $(n,x) \in \I_n$, and $\T \in \Lambda_{n,x}$, we define the set
\begin{equation} \label{K_T}
K_{\T} = \bigcup_{(0,y) \ \in \ \T \cap \I_0} \widetilde{B}_{0,y}.
\end{equation}

\vspace{-0.8ex}
\noindent Hence, $K_{\T}$ is the disjoint union of $2^n$ boxes of side length $3L_0$ each, and $K_{\T} \subset \widetilde{B}_{n,x}$. It then immediately follows from the definition of $A_{\T}^h$ in \eqref{2.7} that (see above \eqref{A_meas} for the notation $\sigma(\; \cdot \;)$)
\begin{equation} \label{Ameas}
A_{\T}^h \in \sigma \big( \varphi_y \ ; \ y \in K_{\T} \big).
\end{equation}
Finally, upon introducing
\begin{equation} \label{2.8}
p_n(h) = \sup_{\T \in \Lambda_{n,x}} \P [A_{\T}^h], \qquad \text{for } (n,x) \in \I_n, \ n\geq 0,
\end{equation}

\vspace{-0.8ex}
\noindent which is well-defined (i.e. independent of $x \in \IL_n$) by translation invariance, we obtain $p_n(h) \geq p_n(h')$ whenever $h \leq h'$, by \eqref{A_T_dec_h}. Note also that 
\begin{equation}\label{p0}
p_0(h) = \P \big[ B_{0,x=0} \stackrel{\geq h}{\longleftrightarrow} \partial^i \widetilde{B}_{0,x=0}\big].
\end{equation}
We now derive the aforementioned ``recursive bounds'' for the probabilities $p_n(h_n)$, c.f. \eqref{2.11} below, along a suitable increasing sequence $(h_n)_{n \geq 0}$ (one-step renormalization). These estimates will be key in proving Theorem \ref{T2.6} below.

\begin{proposition} \label{P2.2} $(L_0 \geq 1$, $l_0 \geq 100)$ 
\medskip

\noindent There exist positive constants $c_1$ and $c_2$ such that, defining 
\vspace{-0.5ex}
\begin{equation} \label{2.8.1}
M(n,L_0)= c_2 \big( \log(2^n (3L_0)^d) \big)^{1/2},
\end{equation}

\vspace{-0.5ex}
\noindent then, given any positive sequence $(\beta_n)_{n \geq 0}$ satisfying
\vspace{-0.5ex}
\begin{equation} \label{2.9}
\beta_n \geq (\log 2)^{1/2} + M(n,L_0), \qquad \text{for all } n \geq 0,
\end{equation}

\vspace{-0.5ex}
\noindent and any increasing, real-valued sequence $(h_n)_{n \geq 0}$ satisfying
\vspace{-0.5ex}
\begin{equation} \label{2.10}
h_{n+1} \geq h_n + c_1 \beta_n \big(2 l_0^{-(d-2)} \big)^{n+1}, \qquad \text{for all } n \geq 0,
\end{equation}

\vspace{-0.7ex}
\noindent one has
\vspace{-0.7ex}
\begin{equation} \label{2.11}
p_{n+1}(h_{n+1}) \leq p_n(h_n)^2 + 3  e^{-(\beta_n -M(n,L_0))^2}, \qquad \text{for all } n \geq 0.
\end{equation}
\end{proposition}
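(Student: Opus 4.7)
The plan is to bound $p_{n+1}(h_{n+1})$ via a one-step renormalization: for each $\T \in \Lambda_{n+1,x}$ I will condition on the field over the first of its two subtrees, invoke the conditional decomposition of Lemma \ref{L1.2}, and use the increment $r_n := h_{n+1} - h_n$ (the ``sprinkle'') to dominate the harmonic shift introduced by this conditioning. By \eqref{Lambda_nx}, $\T = \{(n+1,x)\} \cup \T_1 \cup \T_2$ with $\T_i \in \Lambda_{n,y_i}$ and $(n,y_i) \in \mathcal{H}_i(n+1,x)$, so that $A_\T^{h_{n+1}} = A_{\T_1}^{h_{n+1}} \cap A_{\T_2}^{h_{n+1}}$, each $A_{\T_i}^{h_{n+1}} \in \sigma(\varphi_y;\, y \in K_{\T_i})$ by \eqref{Ameas}, while $K_{\T_1}, K_{\T_2}$ lie in disjoint regions of $\widetilde{B}_{n+1,x}$ at distance of order $L_{n+1}$. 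Setting $U_1 := K_{\T_1}^c$, Lemma \ref{L1.2} furnishes $\varphi = \tilde\varphi + \mu^{(1)}$, with $\mu^{(1)}$ the $\sigma(\varphi_y;\, y \in K_{\T_1})$-measurable harmonic extension \eqref{mu}, and $\tilde\varphi$ independent of $\sigma(\varphi_y;\, y \in K_{\T_1})$ distributed as the free field under $\P^{U_1}$.

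I then introduce the good event $G_n := \{\sup_{y \in K_{\T_2}} \mu^{(1)}_y \leq r_n\}$. On $G_n$, any site $y \in K_{\T_2}$ with $\varphi_y \geq h_{n+1}$ must satisfy $\tilde\varphi_y = \varphi_y - \mu^{(1)}_y \geq h_n$, so $A_{\T_2}^{h_{n+1}} \cap G_n \subseteq \{\tilde\varphi \in A_{\T_2}^{h_n}\} \cap G_n$. Splitting $\P[A_\T^{h_{n+1}}] \leq \P[A_\T^{h_{n+1}} \cap G_n] + \P[G_n^c]$ and exploiting that $A_{\T_1}^{h_{n+1}}$ and $G_n$ are $\sigma(\varphi_y;\, y \in K_{\T_1})$-measurable while $\{\tilde\varphi \in A_{\T_2}^{h_n}\}$ is independent of this $\sigma$-algebra, together with the monotonicity \eqref{A_T_dec_h}, gives
\begin{equation*}
 \P[A_\T^{h_{n+1}} \cap G_n] \leq \P[A_{\T_1}^{h_n}]\cdot \P^{U_1}[A_{\T_2}^{h_n}] \leq p_n(h_n)\cdot \P^{U_1}[A_{\T_2}^{h_n}].
\end{equation*}
A Gaussian comparison $\P^{U_1}[A_{\T_2}^{h_n}] \leq \P[A_{\T_2}^{h_n}] = p_n(h_n)$, reflecting that $g_{U_1}\leq g$ entrywise on $K_{\T_2}\times K_{\T_2}$ and that $A_{\T_2}^{h_n}$ is increasing at a positive threshold $h_n$, bounds this first piece by $p_n(h_n)^2$.

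For the tail term, each $\mu^{(1)}_y$, $y \in K_{\T_2}$, is centered Gaussian with variance $g(y,y) - g_{U_1}(y,y) = E_y[H_{K_{\T_1}} < \infty,\, g(X_{H_{K_{\T_1}}}, y)]$. Combining \eqref{1.15}, \eqref{1.13}, the subadditivity \eqref{1.12} and the box-capacity bound \eqref{1.14} (which gives $\text{cap}(K_{\T_1}) \leq c\cdot 2^n L_0^{d-2}$ since $K_{\T_1}$ is a disjoint union of $2^n$ boxes of side $3L_0$), together with $d(y, K_{\T_1}) \geq c L_{n+1}$, I expect to obtain $\text{Var}(\mu^{(1)}_y) \leq c''(2l_0^{-(d-2)})^{2(n+1)}$. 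A standard Gaussian tail estimate and a union bound over the $|K_{\T_2}| = 2^n(3L_0)^d$ sites then yield, for $c_1, c_2$ suitably chosen and compatible with \eqref{2.8.1}, \eqref{2.9}, \eqref{2.10},
\begin{equation*}
 \P[G_n^c] \leq 2\cdot 2^n(3L_0)^d \exp\!\bigl(-c_1^2\beta_n^2/(2c'')\bigr) \leq 3\exp\!\bigl(-(\beta_n - M(n,L_0))^2\bigr).
\end{equation*}
Taking supremum over $\T \in \Lambda_{n+1,x}$ will complete \eqref{2.11}. The step I expect to require the most care is the Gaussian comparison $\P^{U_1}[A_{\T_2}^{h_n}] \leq p_n(h_n)$: the restricted field has strictly smaller covariance than the free field, so the inequality is genuinely one-sided, and ensuring that the induction is initialized with $h_0$ large enough so that $h_n$ remains bounded below by a positive constant will be essential for it to hold.
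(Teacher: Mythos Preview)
Your overall architecture --- split $\T$ into $\T_1 \cup \T_2$, condition on $\varphi_{|K_{\T_1}}$, use the decomposition $\varphi=\tilde\varphi+\mu^{(1)}$ from Lemma~\ref{L1.2}, and absorb the harmonic shift with a sprinkle --- matches the paper's. Your control of the bad event via $\mathrm{Var}(\mu^{(1)}_y)$ and a union bound over $K_{\T_2}$ is a legitimate alternative to the paper's BTIS estimate on $\max_{K_{\T_1}}\varphi$. The genuine gap is the step
\[
\P^{U_1}[A_{\T_2}^{h_n}] \;\leq\; \P[A_{\T_2}^{h_n}],
\]
which you attribute to ``$g_{U_1}\leq g$ entrywise and $A_{\T_2}^{h_n}$ increasing at positive threshold''. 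No such comparison principle holds for general increasing events, even with positive thresholds. A concrete obstruction: take $(\tilde\varphi_1,\tilde\varphi_2)$ i.i.d.\ standard normal and $(\varphi_1,\varphi_2)=(\tilde\varphi_1+Z,\tilde\varphi_2+Z)$ with $Z\sim\mathcal N(0,\sigma^2)$ independent (so the covariance of $\varphi$ dominates that of $\tilde\varphi$ in the PSD order, exactly the relation between $g$ and $g_{U_1}$). For $A=\{\varphi_1\geq h\}\cup\{\varphi_2\geq h\}$, which is increasing and of the union-of-thresholds form that crossing events have, one computes $\P[\varphi\in A^c]=\E_Z[\Phi(h-Z)^2]$, and the second derivative of $z\mapsto\Phi(h-z)^2$ at $z=0$ equals $2\phi(h)[\phi(h)-h\Phi(h)]$, which is strictly positive for small $h>0$. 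Hence $\P[\varphi\in A]<\P[\tilde\varphi\in A]$ for small $\sigma$, the opposite of what you need. Since $A_{\T_2}^{h_n}$ is an intersection of crossing events, each of which is a \emph{union} over paths of threshold intersections, this obstruction is directly relevant.

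The paper circumvents this entirely and never compares $\P^{U_1}$ to $\P$ for $A_{\T_2}$. Instead, it controls $\{\max_{K_1}\varphi\leq\alpha\}$ (not $\sup_{K_2}\mu^{(1)}$), and on that event uses a \emph{two-sided} shift: from $\tilde\varphi+\mu\geq h$ it deduces $\tilde\varphi - m(\alpha)\geq h-\gamma$, where $m_x(\alpha)=\alpha P_x[H_{K_1}<\infty]$, so that the bound is expressed as a probability under $\P[\,\cdot\mid\varphi_{|K_1}=-\alpha]$ (since then $\mu=-m(\alpha)$). At that point Lemma~\ref{FKG} (FKG for the free field) gives
\[
\P[A_{\T_2}^{h-\gamma}\mid\varphi_{|K_1}=-\alpha]\;\leq\;\P[A_{\T_2}^{h-\gamma}\mid\varphi_{|K_1}\geq-\alpha]\;\leq\;\frac{\P[A_{\T_2}^{h-\gamma}]}{\P[\varphi_{|K_1}\geq-\alpha]},
\]
and the denominator is absorbed into the error term. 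This FKG-based ``bounce back'' to the unconditional measure is the missing idea in your argument; without it, you are stuck at $\P^{U_1}$ and the proposed Gaussian comparison does not close the gap. Note also that the paper's version places no sign restriction on $h_n$, whereas your approach would need one even if the comparison were valid.
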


The main idea of the proof is to ``decouple'' the event $A_{\T'}^{h_n} \cap A_{\T''}^{h_n}$, where $\T'$ and $\T''$ are the (binary) subtrees at level $n$ of some given tree $\T \in \Lambda_{n+1,x}$, $x \in \IL_{n+1}$, using the increase in parameter $h_n \to h_{n+1}$ to dominate the interactions (``sprinkling'').

\begin{proof}
We let $n \geq 0$, consider some $m = (n+1,x) \in \I_{n+1}$ and some tree $\T \in \Lambda_m$. We decompose 
\begin{equation} \label{Tdec}
\T = \{ m \} \cup \T_{n,y_1(m)} \cup \T_{n,y_2(m)}, 
\end{equation}
where $(n,y_i(m))$, $i=1,2$ are the two descendants of $m$ in $\T$ and
\begin{equation} \label{Ti}
\T_{n,y_i(m)} = \{ (k,z) \in \T : \ \widetilde{B}_{k,z} \subseteq \widetilde{B}_{n,y_i(m)} \}, \quad \text{for } i=1,2,
\end{equation}
that is $\T_{n,y_i(m)}$ is the (sub-)tree consisting of all descendants of $(n,y_i(m))$ in $\T$. Thus, the union in \eqref{Tdec} is over disjoint sets. Note in particular that $\T_{n,y_i(m)} \in \Lambda_{n, y_i(m)}$. By construction, the subsets $K_{\T_{n,y_i(m)}}$ $\big(\subset \widetilde{B}_{n,y_i(m)} \big)$, for $i=1,2$, see \eqref{K_T}, satisfy $K_{\T_{n,y_1(m)}} \cap K_{\T_{n,y_2(m)}}= \emptyset$. For sake of clarity, and since $m$ and $\T$ will be fixed throughout the proof, we abbreviate 
\vspace{-0.3ex}
\begin{equation} \label{TiKi_abbr}
\T_{n,y_i(m)} = \T_i \quad \text{ and } \quad K_{\T_{n,y_i(m)}} = K_i, \quad \text{ for } i=1,2. 
\end{equation}

\vspace{-0.3ex}

In order to estimate the probability of the event $A_{\T}^h =A_{\T_1}^h \cap A_{\T_2}^h$, $h \in \mathbb{R}$, defined in \eqref{2.7}, we introduce a parameter $\alpha >0$ and write
\begin{align} \label{P2.2.1}
\P[A_{\T}^h] 
&\leq \P \big[ A_{\T_1}^h \cap A_{\T_2}^h \cap \big\{ \text{max}_{_{K_1}} \varphi \leq \alpha \big\} \big] \ + \ \P \big[ \text{max}_{_{K_1}} \varphi > \alpha  \big] \nonumber \\
& = \mathbb{E} \big[ 1_{A_{\T_1}^h} \cdot 1_{ \{ \text{max}_{_{K_1}} \varphi  \ \leq   \alpha \} } \cdot \mathbb{P} [A_{\T_2}^h \ | \varphi_{_{K_1}}] \big] \ + \ \P \big[  \text{max}_{_{K_1}} \varphi > \alpha  \big], 
\end{align} 
where $\text{max}_{_{K_1}} \varphi = \max \{ \varphi_x  ; \  x\in K_1 \}$ and the second line follows because $A_{\T_1}^h \cap \big\{ \text{max}_{_{K_1}} \varphi \leq \alpha \big\}$ is measurable with respect to $\sigma (\varphi_{_{K_1}})$, c.f. \eqref{Ameas}. We begin by focusing on the conditional probability $\mathbb{P} [A_{\T_2}^h \ | \varphi_{_{K_1}}]$ in \eqref{P2.2.1}. Using \eqref{phi_cond_exps} and \eqref{Ameas} applied to $A_{\T_2}^h$, and with a slight abuse of notation, we find
\begin{equation} \label{P2.2.2}
\mathbb{P} [A_{\T_2}^h \ | \varphi_{_{K_1}}] = \widetilde{ \mathbb{P} } \big[ A_{\T_2}^h \big( (\widetilde{\varphi}_x + \mu_x)_{x \in K_2} \big)\ \big], \qquad \P\text{-almost surely},
\end{equation}
where $ \mu_x = E_x \big[H_{K_1} < \infty , \varphi_{X_{H_{K_1}}} \big]$. On the event $\big\{ \text{max}_{_{K_1}} \varphi \leq \alpha \big\}$, we have, for all $x \in K_2$,
\vspace{-0.6ex}
\begin{equation} \label{P2.2.3}
\mu_x =  \sum_{y \in K_1} \varphi_y P_x [H_{K_1} < \infty, X_{H_{K_1}} = y ] \leq  \alpha \cdot P_x [H_{K_1} < \infty] \stackrel{\text{def.}}{=} m_x (\alpha),
\end{equation}

\vspace{-0.6ex}
\noindent which is deterministic and linear in $\alpha$. Moreover, we can bound $m_x(\alpha)$ as follows. By virtue of \eqref{1.13}, $P_x [H_{K_1} < \infty] \leq \text{cap}(K_1) \cdot \sup_{y \in K_1} g(x,y)$ for all $x \in K_2$. Since $K_1$ consists of $2^n$ disjoint boxes of side length $3L_0$, c.f. \eqref{TiKi_abbr} and \eqref{K_T}, its capacity can be bounded, using \eqref{1.12} and \eqref{1.14}, as $\text{cap}(K_1) \leq c 2^n L_0^{d-2}$. By \eqref{1.15}, \eqref{2.1} and the observation that $|x-y|\geq c' L_{n+1}$ whenever $x \in K_1$ and $y \in K_2$, it follows that 
\vspace{-0.5ex}
\begin{equation} \label{P2.2.4}
m_x (\alpha)  \leq c_1 \big( 2g(0)\big)^{-1/2} \cdot \alpha  \cdot 2^n l_0^{-(n+1)(d-2)} \ \stackrel{\text{def.}}{=} \frac{\gamma}{2}, \qquad \text{for } x \in K_2,
\end{equation}
which defines the constant $c_1$ from \eqref{2.10}, and the factor $( 2g(0))^{-1/2} $ is kept for later convenience.

Returning to the conditional probability $\mathbb{P} [A_{\T_2}^h \ | \varphi_{_{K_1}}]$, we first observe that, on the event $\big\{ \text{max}_{_{K_1}} \varphi \leq \alpha \big\}$ and for any $x \in K_2$, the inequality $\widetilde{\varphi}_x + \mu_x \geq h$ implies 
\vspace{-0.8ex}
\begin{equation*}
\widetilde{\varphi}_x -m_x(\alpha) \geq h - \mu_x -m_x(\alpha) \stackrel{\eqref{P2.2.3}}{\geq} h - 2m_x(\alpha) \stackrel{\eqref{P2.2.4}}{\geq} h - \gamma.
\end{equation*}

\vspace{-0.5ex}
\noindent Hence, on the event $\big\{ \text{max}_{_{K_1}} \varphi \leq \alpha \big\}$,
\vspace{-0.5ex}
\begin{equation} \label{P2.2.5}
\begin{split}
\begin{array}{lcl}
\mathbb{P} [A_{\T_2}^h \ | \varphi_{_{K_1}}]
&\stackrel{\eqref{P2.2.2}}{=} & \hspace{-1ex} \widetilde{ \mathbb{P} } \big[ A_{\T_2}^h \big( (\widetilde{\varphi}_x + \mu_x)_{x \in K_2} \big)\ \big] \\[1.2ex]
&\leq & \hspace{-1ex} \widetilde{ \mathbb{P} } \big[ A_{\T_2}^{h - \gamma} \big( (\widetilde{\varphi}_x -m_x(\alpha))_{x \in K_2} \big)\ \big] = \mathbb{P} [A_{\T_2}^{h - \gamma} \ | \varphi_{\vert_{K_1}} = - \alpha],
\end{array}
\end{split}
\end{equation}
where the last equality follows by \eqref{phi_cond_exps}, noting that, on the event $ \{ \varphi_{\vert_{K_1}} = - \alpha \}$, we have $\mu_x = m_x(-\alpha) = - m_x(\alpha)$ for all $x \in K_2$, c.f. \eqref{P2.2.3}. Applying Lemma \ref{FKG} to the right-hand side of \eqref{P2.2.5}, we immediately obtain that, on the event $\big\{ \text{max}_{_{K_1}} \varphi \leq \alpha \big\}$,
\vspace{-0.7ex}
\begin{equation} \label{P2.2.6}
\mathbb{P} [A_{\T_2}^h \ | \varphi_{K_1}] \leq \mathbb{P} [A_{\T_2}^{h - \gamma} \ | \varphi_{\vert_{K_1}} \geq - \alpha] \leq \mathbb{P} [A_{\T_2}^{h - \gamma} ] \cdot \big( \mathbb{P} [ \varphi_{\vert_{K_1}} \geq - \alpha] \big)^{-1}.
\end{equation}

\vspace{-0.2ex}
\noindent At last, we insert \eqref{P2.2.6} into \eqref{P2.2.1}, noting that, since $\varphi$ has the same law as $-\varphi$, we have $\mathbb{P} [ \varphi_{\vert_{K_1}} \geq - \alpha] = 1- \P[\text{min}_{_{K_1}} \varphi < - \alpha] = 1- \P[\text{max}_{_{K_1}} \varphi > \alpha] $, to get
\vspace{-1.0ex}
\begin{equation} \label{P2.2.7}
\P[A_{\T}^h] \leq \P \big[ A_{\T_1}^h \big] \cdot \P \big[ A_{\T_2}^{h - \gamma} \big] \cdot \big( 1- \P[\text{max}_{_{K_1}} \varphi > \alpha] \big)^{-1}  +  \P \big[  \text{max}_{_{K_1}} \varphi > \alpha  \big]. 
\end{equation}

Next, we turn our attention to the term $\P \big[  \text{max}_{_{K_1}} \varphi > \alpha  \big]$. By virtue of the BTIS-inequality (c.f \cite{AT}, Theorem 2.1.1), for arbitrary $ \emptyset \neq K \subset \subset \mathbb{Z}^d$, we have
\begin{equation} \label{P2.2.8}
\P \big[  \text{max}_{_{K}} \varphi > \alpha  \big] \leq  \exp \bigg\{-\frac{\big( \alpha - \E\big[  \text{max}_{_{K}} \varphi \big] \big)^2}{2 g(0)} \bigg\}, \qquad \text{if} \ \alpha > \E\big[  \text{max}_{_{K}} \varphi \big].
\end{equation}
In order to bound $\E\big[  \text{max}_{_{K}} \varphi \big]$, we write, using Fubini's theorem,
\begin{equation} \label{max_phi_1}
\E\big[  \text{max}_{_{K}} \varphi \big] \leq \E\big[  \text{max}_{_{K}} \varphi^+ \big] = \int_0^\infty  \text{d}u \  \P \big[  \text{max}_{_{K}} \varphi^+ > u \big] \leq A + \int_A^\infty  \text{d}u \  \P \big[  \text{max}_{_{K}} \varphi^+ > u \big],
\end{equation}
for arbitrary $A \geq 0$. Recalling that $\E[\varphi_x^2]= g(0)$ for all $x \in \mathbb{Z}^d$, c.f. \eqref{phi}, and introducing an auxiliary variable $\psi \sim \mathcal{N}(0,1)$, we can bound the integrand as
\begin{equation*}
P \big[  \text{max}_{_{K}} \varphi^+ > u \big] \leq |K| \cdot \P[\varphi_0 > u] =  |K| \cdot \P\big[\psi > g(0)^{-1/2}u \big] \leq |K| \cdot e^{-u^2/2g(0)},
\end{equation*}
where we have used in the last step that $\P [\psi > a] \leq e^{-a^2/2}$, for $a > 0$, which follows readily from Markov's inequality, since $\P [\psi > a] \leq \min_{\lambda > 0}e^{-\lambda a}\E[e^{\lambda \psi}] = \min_{\lambda > 0}e^{-\lambda a + \lambda^2/2}$, and the minimum is attained at $\lambda =a$. Inserting the bound for $P \big[  \text{max}_{_{K}} \varphi^+ > u \big]$ into \eqref{max_phi_1} yields, for arbitrary $A > 0$,
\begin{equation} \label{max_phi_2}
\E\big[  \text{max}_{_{K}} \varphi \big] \leq A +  |K| \int_A^\infty \text{d}u \ e^{-\frac{u^2}{2g(0)}} \leq A + c |K| \cdot e^{-A^2/2g(0)}.
\end{equation}
We select  $A =  (2 g(0) \log|K|)^{1/2}$ (so that $e^{-A^2/2g(0)}= |K|^{-1}$), by which means \eqref{max_phi_2} readily implies that
\begin{equation} \label{max_phi_final}
\E\big[  \text{max}_{_{K}} \varphi \big] \leq c \sqrt{\log|K|}, \qquad \text{for all} \quad \emptyset \neq K \subset \subset \mathbb{Z}^d.
\end{equation}
In the relevant case $K=K_1$ with $|K_1| = 2^n (3L_0)^d$, we thus obtain
\begin{equation} \label{P2.2.10}
\E\big[  \text{max}_{_{K_1}} \varphi \big] \leq c_2 \big( 2g(0) \log(2^n (3L_0)^d)\big)^{1/2} \stackrel{\eqref{2.8.1}}{=} \sqrt{2 g(0)} \cdot M(n, L_0),
\end{equation}
where the first inequality defines the constant $c_2$ from \eqref{2.8.1}. We now require
\begin{equation} \label{P2.2.11}
\alpha / \sqrt{2g(0)} \geq \sqrt{\log 2} + M(n, L_0),
\end{equation}
thus \eqref{P2.2.8} applies and yields
\begin{equation} \label{P2.2.12}
\P \big[  \text{max}_{_{K_1}} \varphi > \alpha  \big] \leq \min \Big\{ 1/2, \  e^{ - \big(\frac{\alpha}{\sqrt{2g(0)}} -M(n,L_0)\big)^2} \Big\}.
\end{equation}
Returning to \eqref{P2.2.7}, and using that $(1-x)^{-1} \leq 1 + 2x$ for all $0 \leq x \leq 1/2$ (with $x = \P \big[  \text{max}_{_{K_1}} \varphi > \alpha  \big] $), we finally obtain, for all $\alpha$ satisfying \eqref{P2.2.11} and $h' \geq h$,
\begin{equation} \label{P2.2.13}
\begin{split}
\begin{array}{lcl}
\P[A_{\T}^{h'}] \ \leq \ \P[A_{\T}^h] \hspace{-1ex} & \leq & \hspace{-1ex} \P \big[ A_{\T_1}^h \big] \cdot \P \big[ A_{\T_2}^{h - \gamma} \big]  + 3 \cdot \P \big[  \text{max}_{_{K_1}} \varphi > \alpha  \big] \\
&\stackrel{\eqref{P2.2.12}}{\leq} & \hspace{-1ex} \P \big[ A_{\T_1}^{h- \gamma} \big] \cdot \P \big[ A_{\T_2}^{h - \gamma} \big]  + 3 e^{-(\beta - M(n,L_0))^2},
\end{array}
\end{split}
\end{equation}
where we have set $\beta = \alpha / \sqrt{2g(0)}$. The claim \eqref{2.11} now readily follows upon taking suprema over all $\T \in \Lambda_{n+1,x}$ on both sides of \eqref{P2.2.13}, letting $\beta_n\stackrel{\text{def.}}{=} \beta$, $h_n \stackrel{\text{def.}}{=} h - \gamma \in \mathbb{R}$ ($h$ was arbitrary), $h_{n+1}\stackrel{\text{def.}}{=}h'$, so that requiring $h_{n+1} = h' \geq h = h_n + \gamma$, by virtue of \eqref{P2.2.4}, is nothing but \eqref{2.10}. Noting condition \eqref{P2.2.11} for $\beta_n =\beta$, we precisely recover \eqref{2.9}. This concludes the proof of Proposition \ref{P2.2}.
\end{proof}

\begin{remark} \label{R2.3} $\quad$
\smallskip

\noindent 1) The bound \eqref{max_phi_final}, which we have derived using an elementary argument, also follows from a more general (and stronger) estimate. One knows that (in fact, this holds for a large class  of Gaussian fields, c.f. \cite{AT}, Theorem 1.3.3), 
\begin{equation} \label{P2.2.9}
\E\big[  \text{sup}_{_{K}} \varphi \big] \leq C \int_0^{\frac{1}{2} \sup_{x,y, \in K} d(x,y)} \sqrt{\log\big( N(\varepsilon)\big)} \ \text{d} \varepsilon,
\end{equation}
where $d(x,y)= \big( \E[(\varphi_x - \varphi_y)^2] \big)^{1/2},$ $x,y \in \mathbb{Z}^d$, $K \subset \subset \mathbb{Z}^d$, $N(\varepsilon)$ denotes the smallest number of closed balls of radius $\varepsilon$ in this metric covering $K$, and $C$ is a universal constant. Clearly, $N(\varepsilon) \leq |K|$ for all $\varepsilon \geq 0$. Moreover, $\sup_{x,y, \in \mathbb{Z}^d} d(x,y) \leq \sqrt{2 g(0)}$ by virtue of \eqref{phi}. Inserting this into \eqref{P2.2.9} immediately yields the bound \eqref{max_phi_final}.
\vspace{0.3cm}

\noindent 2) We mention a generalization of Proposition \ref{P2.2}, which is of independent interest, but will not be needed in what follows. Consider integers $L_0 \geq 1$, $l_0 \geq 100$, and a collection $D_x$, $x \in \IL_0$, of events in $\Omega$ ($=\{ 0,1\}^{\mathbb{Z}^d}$, see above \eqref{1.18}), such that
\begin{equation} \label{di1}
D_x \text{ is } \sigma(Y_z \ ; \ z \in \widetilde{B}_{0,x})\text{-measurable for each $x \in \IL_0$},
\end{equation}
where $Y_z$, $z \in \mathbb{Z}^d$, stand for the canonical coordinates on $\Omega$.

Given $h \in \mathbb{R}$, $n \geq 0$, $x \in \IL_n$, and $\T \in \Lambda_{n,x}$, we replace $A_{\T}^h$ in \eqref{2.7} by (see below \eqref{1.18} for the notation $\Phi^h$)
\begin{equation} \label{di2}
D_{\T}^h = \bigcap_{(0,y) \in \T \cap \I_0} (\Phi^h)^{-1} (D_y),
\end{equation}

\vspace{-0.8ex}
\noindent and $p_n(h)$ in \eqref{2.8} by
\begin{equation} \label{di3}
q_n(h) = \sup_{x \in \IL_n, \T \in \Lambda_{n,x}} \P[D_{\T}^h].
\end{equation}
One then has the following generalization of Proposition \ref{P2.2}:

\begin{propbis} $(L_0 \geq 1$, $l_0 \geq 100$, \eqref{di1}$)$
\smallskip

\noindent Assume that
\vspace{-0.5ex}
\begin{equation} \label{di4}
\text{for each $x \in \IL_0$, $D_x$ is increasing,}
\end{equation}
that $(\beta_n)_{n \geq 0}$ is a positive sequence, $(h_n)_{n \geq 0}$ a real increasing sequence, such that \eqref{2.9}, \eqref{2.10} hold. Then,
\vspace{-0.5ex}
\begin{equation} \label{di5}
q_{n+1}(h_{n+1}) \leq q_n(h_n)^2 + 3  e^{-(\beta_n -M(n,L_0))^2}, \qquad \text{for all } n \geq 0.
\end{equation}
If instead, 
\vspace{-0.5ex}
\begin{equation} \label{di6}
\text{for each $x \in \IL_0$, $D_x$ is decreasing,}
\end{equation}
$(\beta_n)_{n \geq 0}$ is a positive sequence, $(h_n)_{n \geq 0}$ a real decreasing sequence, so that \eqref{2.9} holds and \eqref{2.10} holds for $(-h_n)_{n \geq 0}$, then \eqref{di5} holds as well.
\end{propbis}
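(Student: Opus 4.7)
The plan is to deduce Proposition 2.2' from Proposition 2.2 itself. The increasing case follows by a direct inspection: the proof of Proposition 2.2 uses only three structural features of $A_\mathcal{T}^h$, all of which transfer unchanged to $D_\mathcal{T}^h$ under \eqref{di4}. The decreasing case is then reduced to the increasing one by the reflection $\varphi \mapsto -\varphi$, which preserves the law $\P$ by centeredness of the Gaussian free field.

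\medskip

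For the increasing case under \eqref{di4}, I would first verify that $D_\mathcal{T}^h$ satisfies the analogues of \eqref{A_meas}, \eqref{A_inc}, and \eqref{A_dec_h}. The measurability $D_\mathcal{T}^h \in \sigma(\varphi_y \, ; \, y \in K_\mathcal{T})$ is immediate from \eqref{di1}, \eqref{di2}, and the pointwise definition of $\Phi^h$. Monotonicity of $D_\mathcal{T}^h$ in $\varphi$ follows because $\Phi^h(\varphi)$ is componentwise non-decreasing in $\varphi$ and each $D_y$ is increasing by \eqref{di4}. The decrease in $h$, namely $D_\mathcal{T}^h \supseteq D_\mathcal{T}^{h'}$ for $h \leq h'$, likewise follows from $\Phi^h(\varphi) \geq \Phi^{h'}(\varphi)$ coordinatewise together with \eqref{di4}. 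With these three properties secured, each step of the proof of Proposition 2.2 — the tree decomposition \eqref{Tdec}, the conditioning via the Gaussian shift \eqref{phi_cond_exps} of Lemma \ref{L1.2}, the FKG-type inequality of Lemma \ref{FKG} (applicable because $D_{\mathcal{T}_2}^h$ is increasing), and the BTIS bound \eqref{P2.2.8} on the maximum of $\varphi$ over $K_1$ combined with \eqref{max_phi_final} — transfers verbatim upon replacing $A$ by $D$ throughout, yielding \eqref{di5}.

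\medskip

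For the decreasing case under \eqref{di6}, I would reduce to the increasing case via the law-preserving reflection $\varphi \mapsto -\varphi$. For each $x \in \IL_0$, define the complement-reversed event $D_x^\# = \{ \omega \in \Omega : (1 - \omega_z)_{z \in \widetilde{B}_{0,x}} \in D_x \}$; since componentwise complementation reverses monotonicity and preserves the underlying $\sigma$-algebra, each $D_x^\#$ is increasing in $\omega$ and still $\sigma(Y_z \, ; \, z \in \widetilde{B}_{0,x})$-measurable, hence satisfies \eqref{di1} and \eqref{di4}. Writing $\Phi^h(-\varphi)_z = 1\{-\varphi_z \geq h\} = 1 - 1\{\varphi_z > -h\}$ and using $\P[\varphi_z = -h] = 0$, one checks that, $\P$-almost surely, $\Phi^h(-\varphi) = \mathbf{1} - \Phi^{-h}(\varphi)$ componentwise, whence
\[
\P\bigl[D_\mathcal{T}^h\bigr] \; = \; \P\bigl[D_\mathcal{T}^h(-\varphi)\bigr] \; = \; \P\bigl[(D^\#)_\mathcal{T}^{-h}\bigr],
\]
with $(D^\#)_\mathcal{T}^{-h}$ the event of the form \eqref{di2} built from $(D_x^\#)$ at level $-h$. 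Since $(-h_n)_{n \geq 0}$ is increasing and satisfies \eqref{2.9} and \eqref{2.10} by hypothesis, the increasing case applied to $(D_x^\#)$ and $(-h_n)$ yields \eqref{di5} for $(D_x)$ and $(h_n)$. The only subtle point — and hence the main, albeit minor, \emph{obstacle} in this outline — is the negligible boundary issue $\{\varphi_z = -h\}$ encountered when passing between `$\geq h$' and `$> -h$' under reflection; absolute continuity of the Gaussian law makes it a $\P$-null set that can be safely discarded.
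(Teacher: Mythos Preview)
Your proposal is correct and follows essentially the same route as the paper: for the increasing case both you and the paper simply observe that the proof of Proposition~\ref{P2.2} uses only the measurability \eqref{Ameas}, the monotonicity in $\varphi$, and the monotonicity in $h$ of $A_\mathcal{T}^h$, all of which carry over to $D_\mathcal{T}^h$ under \eqref{di4}; for the decreasing case both reduce to the increasing one via the reflection $\varphi\mapsto -\varphi$ and the flipped events (your $D_x^\#$ is exactly the paper's $\overline{D}_x=\iota^{-1}(D_x)$), with the null-set issue you flag being precisely what the paper absorbs when it asserts that $(1\{\varphi_x<-h\})_{x}$ has the same law as $(1\{\varphi_x\geq h\})_{x}$.
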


\begin{proof}
The arguments employed in the proof of Proposition \ref{P2.2} yield the first statement (with $D_x$, $x \in \IL_0$, increasing events). To derive the second statement (when $D_x$, $x \in \IL_0$, are decreasing events), one argues as follows. One introduces the inversion $\iota: \Omega \longrightarrow \Omega$ such that $Y_z \circ \iota = 1 - Y_z$, for all $z \in \mathbb{Z}^d$, and the collection of ``flipped'' events $\overline{D}_x = \iota^{-1}(D_x)= \iota(D_x)$, $x \in \IL_0$. One defines $\overline{D}_{\T}^h$ as in \eqref{di2} with $\overline{D}_y$, $y \in \IL_0$, in place of  $D_y$, $y \in \IL_0$. Now observe that $(-\varphi_x)_{x \in \mathbb{Z}^d}$ has the same law as $(\varphi_x)_{x \in \mathbb{Z}^d}$ under $\P$, and that for any $h \in \mathbb{R}$, $\big( 1\{ \varphi_x < -h \} \big)_{x \in \mathbb{Z}^d}$ has the same law $Q^h$ as $\big( 1\{ \varphi_x \geq h \} \big)_{x \in \mathbb{Z}^d}$ under $\P$. From this, we infer that for all $h \in \mathbb{R}$, $x \in \IL_n$, and $\T \in \Lambda_{n,x}$,
\begin{equation} \label{di7}
\begin{split}
\P \big[D_{\T}^h \big] &= Q^h \Big[ \bigcap_{(0,y) \in \T \cap \I_0} D_y \Big] = \P \Big[ \big( 1\{ \varphi_x < -h \} \big)_{x \in \mathbb{Z}^d} \in \bigcap_{(0,y) \in \T \cap \I_0} D_y \Big] \\
&= Q^{-h} \Big[ \bigcap_{(0,y) \in \T \cap \I_0} \overline{D}_y \Big] = \P \big[\ \overline{D}_{\T}^{-h} \big]. 
\end{split}
\end{equation}
When \eqref{di6} holds, the events $\overline{D}_x$, $x \in \IL_0$, satisfy \eqref{di4}, and thanks to the identity \eqref{di7}, the second statement of Proposition 2.2' is reduced to the first statement. This concludes the proof of Proposition 2.2'.
\end{proof}

\noindent 3) There is an analogy between Proposition 2.2' and the main renormalization step Theorem 2.1 of \cite{S3}, for the decoupling inequalities of random interlacements (see Theorem 2.6 of \cite{S3}). Note however, that unlike condition (2.7) of \cite{S3} (see also (2.70) in \cite{S3}), \eqref{2.9} and \eqref{2.10} tie in the finest scale $L_0$ to the sequence $(h_n)_{n \geq 0}$. This feature has to do with the role of the cut-off level $\alpha$ we introduce in \eqref{P2.2.1} and the remainder term it produces. \hfill $\square$
\end{remark}

We now return to Proposition \ref{P2.2} and aim at propagating the estimate \eqref{2.11} inductively. To this end, we first define, for all $n \geq 0$,
\begin{equation} \label{2.12}
\beta_n = (\log 2)^{1/2} + M(n,L_0) + 2^{(n+1)/2} \big(n^{1/2} + K_0^{1/2}\big),
\end{equation}
where $K_0 > 0$ is a certain parameter to be specified below in Proposition \ref{P2.5} and later in \eqref{K0}. Note in particular that condition \eqref{2.9} holds for this choice of $(\beta_n)_{n\geq0}$.  In the next proposition, we inductively derive bounds for $p_n(h_n)$, $n \geq 0$, given any sequence $(h_n)_{n\geq 0}$ satisfying the assumptions of Proposition \ref{P2.2}, provided the induction can be initiated, see \eqref{2.13} below.
 
\begin{proposition} \label{P2.5} $\quad$
\smallskip

\noindent Assume $h_0 \in \mathbb{R}$ and $K_0 \geq 3(1-e^{-1})^{-1} \stackrel{\textrm{def.}}{=}B$ are such that
\begin{equation} \label{2.13}
p_0(h_0) \leq e^{-K_0},
\end{equation}
and let  the sequence $(h_n)_{n \geq 0}$ satisfy \eqref{2.10} with  $(\beta_n)_{n\geq0}$ as defined in \eqref{2.12}. Then,
\begin{equation} \label{2.14}
p_n(h_n) \leq e^{-(K_0 - B)2^n}, \qquad \text{for all } n \geq 0. 
\end{equation}
\end{proposition}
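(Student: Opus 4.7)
The plan is to induct on $n \geq 0$, driven by the one-step recursion of Proposition \ref{P2.2}. The base case $n = 0$ is immediate: by hypothesis $p_0(h_0) \leq e^{-K_0}$, and since $B \geq 0$ we have $e^{-K_0} \leq e^{-(K_0 - B)\cdot 2^0}$, which is the claim at scale $0$.

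For the inductive step, I first observe that the choice of $(\beta_n)_{n\geq 0}$ in \eqref{2.12} manifestly satisfies the condition \eqref{2.9}, so Proposition \ref{P2.2} applies and yields
$$p_{n+1}(h_{n+1}) \leq p_n(h_n)^2 + 3 e^{-(\beta_n - M(n,L_0))^2}.$$
The inductive hypothesis gives $p_n(h_n)^2 \leq e^{-(K_0 - B) 2^{n+1}}$. To control the remainder, I would use $\beta_n - M(n,L_0) = \sqrt{\log 2} + 2^{(n+1)/2}(\sqrt{n} + \sqrt{K_0})$, the elementary inequality $(a+b)^2 \geq a^2 + b^2$ for $a,b \geq 0$, and $(\sqrt{n} + \sqrt{K_0})^2 \geq n + K_0$, obtaining $(\beta_n - M(n,L_0))^2 \geq \log 2 + 2^{n+1}(n + K_0)$ and therefore
$$3 e^{-(\beta_n - M(n,L_0))^2} \leq \tfrac{3}{2} e^{-2^{n+1}(n + K_0)}.$$

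The main obstacle is that the naive sum of these two contributions yields
$$p_{n+1}(h_{n+1}) \leq e^{-(K_0-B)2^{n+1}} \bigl(1 + \tfrac{3}{2} e^{-2^{n+1}(n + B)}\bigr),$$
in which the parenthetic factor is strictly greater than one, so the induction does \emph{not} close at the advertised bound. To remedy this I would strengthen the inductive hypothesis by tracking the deficit $\delta_n := -\log p_n(h_n) - (K_0 - B) 2^n$, which the hypothesis $p_0(h_0) \leq e^{-K_0}$ forces to satisfy $\delta_0 \geq B$ at the base step. Taking logarithms in the display above yields $\delta_{n+1} \geq 2 \delta_n - \log\bigl(1 + \tfrac{3}{2} e^{-2^{n+1}(n + B) - 2\delta_n}\bigr)$; dividing by $2^{n+1}$ and telescoping reduces the question to the summability of a rapidly convergent series in $k$. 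Combining the elementary estimate $\log(1+x) \leq x$ with the geometric bound $\sum_{k \geq 0} e^{-k} = (1-e^{-1})^{-1}$, the total correction is dominated by $3(1-e^{-1})^{-1} = B$, precisely the standing hypothesis $K_0 \geq B$. This forces $\delta_n \geq 0$ for every $n$, which is \eqref{2.14}.
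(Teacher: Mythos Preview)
Your strategy---strengthen the induction by tracking the excess decay---is exactly the paper's, but the recursion you write for $\delta_n$ contains a sign error that hides the real difficulty. With $p_n(h_n)=e^{-(K_0-B)2^n-\delta_n}$, factoring $p_n(h_n)^2$ out of the sum $p_n(h_n)^2+\tfrac32 e^{-2^{n+1}(n+K_0)}$ gives
\[
\delta_{n+1}\ \ge\ 2\delta_n-\log\Bigl(1+\tfrac32\,e^{-2^{n+1}(n+B)\,\boldsymbol{+}\,2\delta_n}\Bigr),
\]
with $+2\delta_n$ in the exponent, not $-2\delta_n$. This matters: the correction term now carries a factor $e^{2\delta_n}$, and since $\delta_n=-\log p_n(h_n)-(K_0-B)2^n$ has no a priori upper bound (indeed $p_n(h_n)$ could be arbitrarily small, even $0$), the telescoped series you describe is not controlled and the appeal to $\sum_k e^{-k}$ is unjustified.

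The fix---and this is precisely what the paper does---is to replace the actual deficit $\delta_n$ by a \emph{deterministic} auxiliary sequence. The paper sets $K_{n+1}=K_n-\log\bigl(1+e^{K_n}3^{2^{-(n+1)}}e^{-2^{-(n+1)}(\beta_n-M(n,L_0))^2}\bigr)$ and proves two things separately: (i) $p_n(h_n)\le e^{-K_n 2^n}$ by induction, and (ii) $K_n\ge K_0-B$. Step (ii) works because $K_n\le K_0$ is automatic (each step subtracts a nonnegative quantity), and this upper bound is exactly what allows the correction terms to be dominated by $3e^{-m}$ and summed to $B$. Your argument becomes correct once you make this substitution; as written, the sign slip and the use of the unbounded $\delta_n$ leave a genuine gap.
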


\begin{proof}
We define a sequence $(K_n)_{n \geq 0}$ inductively by
\begin{equation} \label{P2.5.1}
K_{n+1} = K_n - \log \Big( 1+ e^{K_n} \cdot 3^{2^{-(n+1)}} e^{-2^{-(n+1)}(\beta_n - M(n,L_0))^2} \Big), \qquad \text{ for all } n \geq 0,
\end{equation}
with $\beta_n$ given by \eqref{2.12} (the factor following $e^{K_n}$ in \eqref{P2.5.1} should be viewed as the $2^{(n+1)}$-th root of the remainder term on the right-hand side of \eqref{2.11}). Then, \eqref{P2.5.1} implies that $K_n \leq K_0$ for all $n \geq 0$. Moreover, as we now see,
\begin{equation} \label{P2.5.2}
K_n \geq K_0 - B, \qquad \text{for all } n \geq 0.
\end{equation}
This is clear for $n =0$. When $n \geq 1$, first note that by virtue of \eqref{P2.5.1}, 
\begin{equation} \label{P2.5.3}
K_n = K_0 - \sum_{m=0}^{n-1} \log \Big( 1+ e^{K_m} \cdot 3^{2^{-(m+1)}} e^{-2^{-(m+1)}(\beta_m - M(m,L_0))^2} \Big), \qquad \text{for all } n \geq 1.
\end{equation}
Moreover, \eqref{2.12} implies 
\begin{equation} \label{P2.5.4}
(\beta_m - M(m,L_0))^2  \geq \log 2 + 2^{m+1}(m^{1/2} + K_0^{1/2})^2 \geq \log 2 + 2^{m+1}(m + K_0),
\end{equation}
for all $m \geq 0$, which, inserted into \eqref{P2.5.3}, yields
\begin{equation*} 
K_n  \geq K_0 -  \sum_{m=0}^{\infty} \log \Big( 1+ e^{K_m} \cdot 3^{2^{-(m+1)}} e^{-K_0 -m} \Big) \geq K_0 - 3 \sum_{m=0}^\infty e^{-m} = K_0 -B,
\end{equation*}
where we have used $K_n \leq K_0$ and $\log(1+x) \leq x$ for all $x \geq 0$ in the second inequality. Hence, \eqref{P2.5.2} holds. We will now show by induction on $n$ that
\begin{equation} \label{P2.5.5}
p_n (h_n) \leq e^{-K_n 2^n}, \qquad \text{for all } n \geq 0,
\end{equation}
which, together with \eqref{P2.5.2}, implies \eqref{2.14}. The inequality \eqref{P2.5.5} holds for $n=0$ by assumption, c.f. \eqref{2.13}. Assume now it holds for some $n$. By Proposition \ref{2.2}, we find
\begin{align*}
p_{n+1}(h_{n+1}) 
&\stackrel{\eqref{2.11}}{\leq}  \big( e^{-K_n 2^n} \big)^2 +    3 e^{-(\beta_n - M(n,L_0))^2}  \\
&\leq \Big[ e^{-K_n} \big( 1+ e^{K_n}3^{2^{-(n+1)}} e^{-2^{-(n+1)}(\beta_n - M(n,L_0))^2} \big) \Big]^{2^{n+1}} \stackrel{\eqref{P2.5.1}}{=} e^{-K_{n+1}2^{n+1}}.
\end{align*}
This concludes the proof of \eqref{P2.5.5} and thus of Proposition \ref{P2.5}.
\end{proof}

\begin{remark} $\quad$
\medskip

\noindent Although we will not need this fact in what follows, let us point out that a straightforward adaptation of Proposition \ref{P2.5} holds in the context of Proposition 2.2'. \hfill $\square$
\end{remark}

\bigskip
We will now state the main theorem of this section and prove it using Proposition \ref{P2.5}. To this end, we select $K_0$ appearing in Proposition \ref{P2.5} as follows:
\begin{equation} \label{K0}
K_0 = \log(2c_0 l_0^{2(d-1)}) + B \qquad \text{(see \eqref{2.6.1} for the definition of $c_0$)}.
\end{equation}
Moreover, we will solely consider sequences $(h_n)_{n \geq0}$ with
\begin{equation} \label{h_n}
h_0 > 0,  \qquad h_{n+1} - h_n = c_1 \beta_n \big(2 l_0^{-(d-2)} \big)^{n+1}, \qquad \text{for all } n \geq 0,
\end{equation}
so that condition \eqref{2.10} is satisfied. We recall that $\beta_n$ is given by \eqref{2.12}, which now reads
\begin{equation} \label{beta_n}
\beta_n = (\log 2)^{1/2} + c_2 \big( \log(2^n(3L_0)^d)\big)^{1/2} + 2^{(n+1)/2} \big( n^{1/2} + ( \log(2c_0 l_0^{2(d-1)}) + B)^{1/2} \big)
\end{equation}
where we have substituted $M(n,L_0)$ from \eqref{2.8.1} and $K_0$ from \eqref{K0}. Note that $L_0,$ $l_0$ and $h_0$ are the only parameters which remain to be selected. We finally proceed to the main

\begin{theorem} \label{T2.6}$\quad$
\smallskip

\noindent The critical point $h_{**}(d)$ defined in \eqref{h**} satisfies
\begin{equation} \label{2.20}
h_{**}(d) < \infty, \qquad \text{for all } d \geq 3.
\end{equation}
Moreover, for all $d \geq 3$ and $h > h_{**}(d)$, there exist positive constants $c(h)$, $c'(h)$ and $0< \rho <1$ $(\rho$ depending on $d$ and $h)$ such that
\begin{equation} \label{2.21}
\P \big[ B(0,L) \stackrel{\geq h}{\longleftrightarrow}  S(0,2L) \big] \leq c(h) \cdot e^{-c'(h)L^{\rho}}, \qquad \text{for all } L \geq 1.
\end{equation} 
In particular, the connectivity function $\P \big[ 0 \stackrel{\geq h}{\longleftrightarrow}  x \big]$ of the excursion set above level $h$ has stretched exponential decay, i.e. there exists $c''(h)>0$ such that 
\begin{equation} \label{2.21bis}
\P \big[ 0 \stackrel{\geq h}{\longleftrightarrow}  x \big] \leq c(h) \cdot e^{-c''(h)|x|^{\rho}}, \qquad \text{for all } x \in \mathbb{Z}^d, \ h > h_{**}(d), \text{ and } d \geq 3.
\end{equation} 
\end{theorem}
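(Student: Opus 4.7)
The core is to combine Lemma \ref{L2.1} with Proposition \ref{P2.5} and the choice \eqref{K0} of $K_0$. Since $|\Lambda_{n,0}| \leq (c_0 l_0^{2(d-1)})^{2^n}$ and the proposition yields $p_n(h_n) \leq e^{-(K_0-B)2^n} = (2c_0 l_0^{2(d-1)})^{-2^n}$, we would obtain
\[
\P[A_{n,0}^{h_\infty}] \leq |\Lambda_{n,0}| \cdot p_n(h_\infty) \leq 2^{-2^n},
\]
with $h_\infty = \lim_n h_n$, whenever the initialization $p_0(h_0) \leq e^{-K_0}$ holds. Since $L_n = l_0^n L_0$, this is stretched exponential in $L_n$ with exponent $\rho = \log 2 / \log l_0 \in (0,1)$. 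To pass from this to a bound on $\P[B(0,L) \stackrel{\geq h_\infty}{\longleftrightarrow} S(0,2L)]$, I cover $B(0,L)$ by the $O(l_0^d)$ boxes $B_{n,y}$ meeting it with $n$ the largest integer such that $3L_n \leq L$, so that every $\widetilde{B}_{n,y}$ (for such $y$) lies inside $B(0,2L)$; any open crossing from $B(0,L)$ to $S(0,2L)$ then realizes $A_{n,y}^{h_\infty}$ for at least one such $y$, and a union bound gives $c\, e^{-c' L^\rho}$.

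To establish \eqref{2.20}, I fix any $l_0 \geq 100$ (so $2\sqrt{2}/l_0^{d-2} < 1$) and $L_0 = 1$. Then the sequence $(\beta_n)$ from \eqref{beta_n} is $O(2^{n/2}\sqrt{n+1+\log l_0})$, so the series defining $h_\infty - h_0$ in \eqref{h_n} converges and any sequence $(h_n)$ of the required shape has finite limit. The initialization follows from a crude union bound and the Gaussian tail: $p_0(h_0) \leq L_0^d\, \P[\varphi_0 \geq h_0] \leq \exp(-h_0^2/(2g(0)))$, which is $\leq e^{-K_0}$ for $h_0$ chosen large enough. The opening paragraph then yields $h_{**} \leq h_\infty < \infty$, proving \eqref{2.20}.

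For the stretched exponential bound \eqref{2.21} at an arbitrary $h > h_{**}$, I rerun the scheme with parameters tailored to $h$. Pick $h_0 \in (h_{**}, h)$. By the definition \eqref{h**} of $h_{**}$, $p_0(h_0) \to 0$ polynomially as $L_0 \to \infty$, so for any prescribed $l_0$ one can choose $L_0 = L_0(l_0)$ large enough that $p_0(h_0) \leq e^{-K_0(l_0)}$. On the other hand,
\[
h_\infty - h_0 \;=\; c_1 \sum_{n \geq 0} \beta_n \bigl(2 l_0^{-(d-2)}\bigr)^{n+1} \;=\; O\bigl( l_0^{-(d-2)}(\sqrt{\log l_0} + \sqrt{\log L_0}) \bigr),
\]
which tends to $0$ as $l_0 \to \infty$ \emph{even if} $L_0$ is simultaneously taken as large as the initialization demands. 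Choosing $l_0$ \emph{last} and sufficiently large yields $h_\infty \leq h$, and the opening paragraph then delivers \eqref{2.21} with $\rho = \log 2/\log l_0$. Finally, \eqref{2.21bis} follows from \eqref{2.21} via the inclusion $\{0 \stackrel{\geq h}{\longleftrightarrow} x\} \subset \{B(0, L) \stackrel{\geq h}{\longleftrightarrow} S(0, 2L)\}$ with $L = \lfloor |x|_\infty/4 \rfloor$, since any open path from $0$ to $x$ must cross $S(0, 2L)$. The main obstacle is the parameter juggling in the second part: $L_0$ must be large to exploit polynomial decay at $h_{**}$, yet $\beta_n$ grows with $\sqrt{\log L_0}$; the resolution is to select $l_0$ last and very large, exploiting the $l_0^{-(d-2)}$ prefactor to dominate the $\sqrt{\log L_0}$ term.
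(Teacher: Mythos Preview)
Your proof is correct and follows essentially the same route as the paper's: the same combination of Lemma~\ref{L2.1}, Proposition~\ref{P2.5}, and the choice \eqref{K0}, the same exponent $\rho = \log 2/\log l_0$, and the same covering/union-bound passage from $\P[A_{n,x}^{h_\infty}]\le 2^{-2^n}$ to \eqref{2.21}. The only cosmetic difference is the parameter coupling for \eqref{2.21}: the paper fixes $l_0 = 100\bigl([L_0^{\varepsilon/(3(d-1))}]+1\bigr)$ as a function of $L_0$ and sends $L_0 \to \infty$, whereas you reverse the roles; to make your final step airtight you should note explicitly that the initialization $p_0(h_0)\le e^{-K_0}= c\,l_0^{-2(d-1)}$ only forces $L_0$ to grow polynomially in $l_0$ (namely $L_0 \gtrsim l_0^{2(d-1)/\alpha}$), so that $\sqrt{\log L_0} = O(\sqrt{\log l_0})$ and hence $h_\infty - h_0 = O\bigl(l_0^{-(d-2)}\sqrt{\log l_0}\bigr)\to 0$ as claimed.
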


\begin{cor} \label{C2.7} $\quad$
\smallskip

\noindent The excursion set $E_\varphi^{\geq h}$ above level $h$ defined in \eqref{Ephi} undergoes a non-trivial percolation phase transition for all $d \geq 3$, i.e. 
\begin{equation} \label{2.22}
 (0 \leq) \ h_*(d) < \infty, \qquad \text{for all } d \geq 3, 
\end{equation}
and
\begin{equation} \label{2.23}
\P[E_{\varphi}^{\geq h} \text{ contains an infinite cluster }] = \left \{
\begin{array}{rl}
 1, & \text{ if  } h < h_* \\
 0, & \text{ if  } h > h_*.
\end{array}
\right.
\end{equation}
\end{cor}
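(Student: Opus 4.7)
The plan is to assemble Corollary \ref{C2.7} from three ingredients already available: Theorem \ref{T2.6} (which gives $h_{**}(d)<\infty$), the known lower bound $h_*(d)\geq 0$ from \cite{BLM}, and the $0$--$1$ law of Lemma \ref{perc_prop}. The only new item is the elementary inequality \eqref{h*vsh**}, namely $h_*\leq h_{**}$, which was announced after the definition \eqref{h**} but not yet proved.

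First, I would establish $h_*\leq h_{**}$. Fix $h>h_{**}$; by the definition \eqref{h**}, there exists $\alpha>0$ such that $L^\alpha\,\P[B(0,L)\stackrel{\geq h}{\longleftrightarrow}S(0,2L)]\to 0$, and in particular the probability itself tends to $0$. Since $\{0\stackrel{\geq h}{\longleftrightarrow}\infty\}\subseteq\{B(0,L)\stackrel{\geq h}{\longleftrightarrow}S(0,2L)\}$ for every $L\geq 1$ (any infinite $\geq h$-cluster through $0$ must in particular exit every ball around $0$, hence reach $S(0,2L)$ from $B(0,L)$), we get $\eta(h)=\P[0\stackrel{\geq h}{\longleftrightarrow}\infty]=0$. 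Therefore $h\geq h_*$, and taking infimum over $h>h_{**}$ yields $h_*\leq h_{**}$.

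Combining this inequality with \eqref{2.20} of Theorem \ref{T2.6} gives $h_*(d)\leq h_{**}(d)<\infty$ for all $d\geq 3$, while the lower bound $h_*(d)\geq 0$ is Corollary 2 of \cite{BLM}, cited below \eqref{h*}. Together these prove \eqref{2.22}.

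For the dichotomy \eqref{2.23}, I would directly invoke Lemma \ref{perc_prop}: by the definition \eqref{h*} of $h_*$, $\eta(h)>0$ for $h<h_*$ and $\eta(h)=0$ for $h>h_*$, and Lemma \ref{perc_prop} translates these into $\Psi(h)=1$ and $\Psi(h)=0$ respectively. No step here is a genuine obstacle; the only point requiring a moment of care is the monotonicity argument $h_*\leq h_{**}$, and even that reduces to the obvious containment of crossing events described above.
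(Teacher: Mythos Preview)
Your proof is correct and follows essentially the same approach as the paper's own proof: the paper also cites Corollary~2 of \cite{BLM} for the lower bound, establishes $h_*\leq h_{**}$ via the containment $\eta(h)\leq \P[B(0,L)\stackrel{\geq h}{\longleftrightarrow}S(0,2L)]$ for all $L\geq 1$ (your \eqref{C2.7.1}), invokes Theorem~\ref{T2.6} for the finiteness of $h_{**}$, and appeals to Lemma~\ref{perc_prop} for the dichotomy \eqref{2.23}. Your version is slightly more explicit in spelling out why the containment yields $h_*\leq h_{**}$, but the arguments are otherwise identical.
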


\begin{proof6}
The lower bound $h_*(d) \geq 0$ in \eqref{2.22} follows from Corollary 2 of \cite{BLM}. In order to establish the finiteness in \eqref{2.22}, it suffices to show $h_* \leq h_{**}$ and to invoke the above Theorem \ref{T2.6}. To this end, we note that by definition (c.f. \eqref{eta}),
\begin{equation} \label{C2.7.1}
\eta (h) \leq \P \big[ B(0,L) \stackrel{\geq h}{\longleftrightarrow} S(0,2L) \big], \qquad \text{for all } L \geq 1, 
\end{equation}
and $h_* \leq h_{**}$ readily follows. As for \eqref{2.23}, it is an immediate consequence of Lemma \ref{perc_prop}. \hfill $\square$
\end{proof6}

\medskip

\begin{proof1}
To prove \eqref{2.20}, it suffices to construct an explicit level $\bar{h}$ with $0 < \bar{h} < \infty$ such that $\P \big[ B(0,L) \stackrel{\geq \bar{h}}{\longleftrightarrow}  S(0,2L) \big]$ decays polynomially in $L$, as $L \to \infty$. In fact, we will even show that $\P \big[ B(0,L) \stackrel{\geq \bar{h}}{\longleftrightarrow}  S(0,2L) \big]$ has stretched exponential decay. 

We begin by observing that the sequence $(h_n)_{n \geq 0}$ defined in \eqref{h_n} has a finite limit $h_\infty = \lim_{n \to \infty} h_n$ for every choice of $L_0,$ $l_0$ and $h_0$. Indeed, $\beta_n$ as given by \eqref{beta_n} satisfies $\beta_n \leq c(L_0,l_0)2^{n+1}$ for all $n \geq 0$, hence 
\begin{equation*}
h_\infty \stackrel{\eqref{h_n}}{=} h_0 +  c_1 \sum_{n=0}^\infty \beta_n \big(2 l_0^{-(d-2)} \big)^{n+1} \leq h_0 + c'(L_0,l_0) \sum _{n=0}^\infty \big(4 l_0^{-(d-2)} \big)^{n+1} < \infty,
\end{equation*}
since we assumed $l_0 \geq 100$. We set
\begin{equation} \label{L0l0}
L_0 = 10, \qquad l_0 =100,
\end{equation}
and now show with Proposition \ref{P2.5} that there exists $h_0 >0$ sufficiently large such that, defining
\begin{equation} \label{barh}
\bar{h} = h_\infty = \lim_{n \to \infty} h_n \quad (< \infty), 
\end{equation}
we have
\begin{equation} \label{GOAL1}
\P \big[ B(0,L) \stackrel{\geq \bar{h}}{\longleftrightarrow}  S(0,2L) \big] \leq c \cdot e^{-c'L^\rho}, \qquad \text{ for all } L \geq 1,
\end{equation}
for suitable $c,c' >0$ and $0 < \rho <1$. To this end, we note that $p_0 (h_0)$ defined in \eqref{p0} is bounded \nolinebreak by
\begin{align*}
p_0 (h_0) \stackrel{\eqref{p0}}{\leq} \P \big[\text{max}_{_{\widetilde{B}_{0,x=0}}} \varphi \geq h_0 \big] \stackrel{\eqref{P2.2.8}}{\leq} \exp \bigg\{-\frac{\big( h_0 - \E\big[  \text{max}_{_{\widetilde{B}_{0,x=0}}} \varphi \big] \big)^2}{2 g(0)} \bigg\}, 
\end{align*}
when $h_0 \geq c$ (e.g. using \eqref{max_phi_final} to bound $\E\big[  \text{max}_{_{\widetilde{B}_{0,x=0}}} \varphi \big]$). In particular, since $K_0$ in \eqref{K0} is completely determined by the choices \eqref{L0l0}, we see that $p_0(h_0) \leq e^{-K_0}$ for all $h_0 \geq c$, i.e. condition \eqref{2.13} holds for sufficiently large $h_0$. By Proposition \ref{P2.5}, setting $h_0 = c$ and $\bar{h}$ as in \eqref{barh}, we obtain
\begin{equation} \label{T2.6.3}
p_n(\bar{h}) \stackrel{\bar{h} > h_n}{\leq} p_n(h_n) \stackrel{\eqref{2.14}}{\leq} e^{-(K_0 - B)2^n} \stackrel{\eqref{K0}}{=} \big( 2c_0 l_0^{2(d-1)} \big)^{-2^n} , \qquad \text{for all } n \geq 0.
\end{equation}
Therefore, we find that for all $n \geq 0$ and $x \in \IL_n$,
\begin{equation} \label{T2.6.4}
\P \big[ B_{n,x} \stackrel{\geq \bar{h}}{\longleftrightarrow}  \partial^i \widetilde{B}_{n,x} \big] \stackrel{\eqref{2.7}}{\leq } |\Lambda_{n,x}| \cdot p_n (\bar{h}) \stackrel{\eqref{2.6.1}, \eqref{T2.6.3}}{\leq} \big( c_0 l_0^{2(d-1)} \big)^{2^n} \big( 2c_0 l_0^{2(d-1)} \big)^{-2^n} = 2^{-2^n}.
\end{equation}
We now set $\rho = \log 2 / \log l_0$, whence $2^n = l_0^{n\rho} = (L_n / L_0)^{\rho}$. Given $L \geq 1$, we first assume there exists $n \geq 0$ such that $2L_n \leq L < 2L_{n+1}$. Then, since 
\begin{equation*}
\P \big[ B(0,L) \stackrel{\geq \bar{h}}{\longleftrightarrow}  S(0,2L) \big] \leq \P \bigg[ \bigcup_{x \in \IL_n : B_{n,x} \cap  S(0,L) \neq \emptyset} \big\{ B_{n,x} \stackrel{\geq \bar{h}}{\longleftrightarrow}  \partial^i \widetilde{B}_{n,x} \big\} \bigg],
\end{equation*}
and the number of sets contributing to the union on the right-hand side is bounded by $cl_0^{d-1}$, \eqref{T2.6.4} readily implies \eqref{GOAL1}, and by adjusting $c,c'$, \eqref{GOAL1} will hold for $L < 2 L_0$ as well. It follows that $\bar{h} \geq h_{**}$, which completes the proof of \eqref{2.20}.
 
We now turn to the proof of \eqref{2.21}. Let $h$ be some level with $h_{**} < h < \infty$, and define $h_0 = (h_{**} + h)/2$. Since $h_0 > h_{**}$, we may choose $\varepsilon = \varepsilon(h) > 0$ such that $\lim_{L \to \infty} L^\varepsilon  \P  \big[ B(0,L) \stackrel{\geq h_0}{\longleftrightarrow}  S(0,2L) \big] = 0$, which readily implies 
\begin{equation} \label{T2.6.5}
\lim_{L_0 \to \infty } L_0^\varepsilon \cdot p_0(h_0) = 0, 
\end{equation}
(see \eqref{p0} for the definition of $p_0(\cdot)$). Moreover, we let
\begin{equation} \label{l0}
l_0 = 100 \big( \big[ L_0^{\frac{\varepsilon}{3(d-1)}} \big] +1 \big),
\end{equation}
so that $l_0 \geq 100$ is an integer, as required. From \eqref{beta_n}, it is then easy to see, using \eqref{l0}, that $\beta_n \leq c(h) \log(l_0) 2^{n+1}$, for all $n \geq 0$. Hence, the limit $h_\infty= \lim_{n \to \infty} h_n$ of the increasing sequence $(h_n)_{n \geq 0}$ defined in \eqref{h_n} satisfies
\begin{equation} \label{2.19}
\begin{split}
h_\infty = h_0 + c_1 \sum_{n=0}^\infty \beta_n \big(2 l_0^{-(d-2)} \big)^{n+1}
&\leq h_0 + c'(h) \log(l_0)l_0^{-(d-2)} \sum_{n=0}^\infty  \big(4 l_0^{-(d-2)} \big)^{n} \\
&= h_0 + c'(h)  \frac{\log(l_0)}{l_0^{d-2}} \cdot  \frac{1}{1- 4 l_0^{-(d-2)}} .
\end{split}
\end{equation}
Thus, \eqref{l0} and \eqref{2.19} imply that $h_\infty \leq h$ whenever $L_0 \geq c(h)$. Moreover, 
\begin{equation} \label{T2.6.6}
e^{-K_0} \stackrel{\eqref{K0}}{=} cl_0^{-2(d-1)} \stackrel{\eqref{l0}}{\geq}c' L_0^{-\frac{2 \varepsilon}{3}} \geq p_0(h_0), \qquad \text{for all } L_0 \geq c(h),
\end{equation}
where the last inequality follows by \eqref{T2.6.5}. We thus select $L_0 = c(h)$ so that both \eqref{T2.6.6} and $h_\infty \leq h$ hold. Since condition \eqref{2.13} is satisfied, Proposition \ref{P2.5} yields
\begin{equation*}
p_n(h) \stackrel{h \geq h_n}{\leq}p_n(h_n) \stackrel{\eqref{2.14}}{\leq} e^{-(K_0 - B)2^n} \stackrel{\eqref{K0}}{=} \big( 2c_0 l_0^{2(d-1)} \big)^{-2^n} , \qquad \text{for all } n \geq 0,
\end{equation*}
from which point on one may argue in the same manner as for the proof of \eqref{2.20} to infer \eqref{T2.6.4} (with $h$ in place of $\bar{h}$)  and subsequently deduce \eqref{2.21}. In particular, this involves defining $\rho = \log 2 / \log l_0$, which depends on $h$ (and $d$) through $l_0$. The stretched exponential bound \eqref{2.21bis} for the connectivity function of $E_\varphi^{\geq h}$ is an immediate corollary of \eqref{2.21}, since $\P\big[ 0 \stackrel{\geq h}{\longleftrightarrow}  x \big] \leq \P \big[ B(0,L) \stackrel{\geq h}{\longleftrightarrow}  S(0,2L) \big] \leq c(h) e^{- c''(h)|x|^\rho}$ whenever $ 2L \leq |x|_\infty < 2(L+1)$. \hfill $\square$
\end{proof1}

\begin{remark} \label{2.8} $\quad$
\smallskip

\noindent 1) An important open question is whether $h_*$ equals $h_{**}$ or not. In case the two differ, the decay of $\P \big[ 0 \stackrel{\geq h}{\longleftrightarrow} S(0,L) \big]$ as $L \to \infty$, for $h > h_*$, exhibits a sharp transition. Indeed, first note that by \eqref{h*}, for all $h > h_*$, $\P \big[ 0  \stackrel{\geq h}{\longleftrightarrow} S(0,L) \big] \longrightarrow 0$, as $ L \to \infty$. If $h_{**} > h_*$, then by definition of $h_{**}$,
\begin{equation*}
\text{for } h \in ( h_* , h_{**}) \text{ and any } \alpha > 0,  \qquad \limsup_{L \to \infty} L^{d-1+ \alpha} \P \big[ 0  \stackrel{\geq h}{\longleftrightarrow} S(0,L) \big] = \infty.
\end{equation*}
Hence $\P \big[ 0  \stackrel{\geq h}{\longleftrightarrow} S(0,L) \big]$ decays to zero with $L$, but with an at most polynomial decay for $h \in ( h_* , h_{**})$. However, for $h > h_{**}$, $\P \big[ 0  \stackrel{\geq h}{\longleftrightarrow} S(0,L) \big]$ has a stretched exponential decay in $L$, by \eqref{2.21}. 

\vspace{0.3cm}

\noindent 2) The proof of Theorem \ref{T2.6} works just as well if we replace the assumption $h> h_{**}$ by $h>\tilde{h}_{**}$, where $\tilde{h}_{**}(\leq h_{**})$ is defined similarly as $h_{**}$ in \eqref{h**}, simply replacing the ``lim'' by a ``liminf'' in \eqref{h**}, i.e.
\begin{equation}
\tilde{h}_{**} = \inf \big\{ h \in \mathbb{R} \; ; \text{for some $\alpha >0$, } \liminf_{L\to\infty} L^{\alpha } \; \P \big[ B(0,L) \stackrel{\geq h}{\longleftrightarrow} S(0, 2L) \big] = 0 \big\}.
\end{equation}
Hence $ \P \big[ B(0,L) \stackrel{\geq h}{\longleftrightarrow} S(0, 2L) \big]$ has stretched exponential decay in $L$ when $h > \tilde{h}_{**}$, and one has in fact the equality
\begin{equation}
h_{**} = \tilde{h}_{**}.
\end{equation}
\hfill $\square$
\end{remark}

\section{Positivity of $h_*$ in high dimension} \label{POSLARGEd}

The main goal of this section is the proof of Theorem \ref{T3.3} below, which roughly states that in high dimension, for small but \textit{positive} $h$, the excursion set $E_\varphi^{\geq h}$ contains an infinite cluster with probability $1$. We will prove the stronger statement that percolation already occurs in a two-dimensional slab $\mathbb{Z}^2 \times [0,2L_0) \times \{ 0\}^{d-3} \subset \mathbb{Z}^d$ for sufficiently large $L_0$, see \eqref{3.SlabPerc} below. 


The proof essentially relies on two main ingredients. The first ingredient is a suitable decomposition, for large $d$, of the free field $\varphi$ restricted to $ \mathbb{Z}^3$ (viewed as a subset of $\mathbb{Z}^d$), into the sum of two independent Gaussian fields $\psi$ and $\xi$ (c.f. \eqref{3.3} and \eqref{3.4} below for their precise definition). The field $\psi$ is i.i.d. and the dominant part, while $\xi$ only acts as a ``perturbation.'' The key step towards this decomposition appears in Lemma \ref{L3.1}. 

The second ingredient is a Peierls-type argument, which comprises several steps: first, the sublattice $\M$ is partitioned into blocks of side length $L_0$, which are declared ``good'' if certain events defined separately for $\xi$ and $\psi$ occur simultaneously. Roughly speaking, these events are chosen in a way that suitable excursion sets of the dominant field $\psi$ percolate well and the perturbative part $\xi$ doesn't spoil this percolation (see \eqref{T3.3.6}, \eqref{T3.3.4} and \eqref{BAD} for precise definitions). Moreover, $*$-connected components of bad blocks are shown to have small probability, see Lemma \ref{PeierlsBound}. This ensures that the usual method of Peierls contours is applicable, which in turn allows the conclusion that an infinite cluster of $E_\varphi^{\geq h}$ exists within the above-mentioned slab with positive probability (and with probability $1$ by ergodicity). We note that $\xi$ doesn't have finite-range dependence, which renders impractical the use of certain well-known stochastic domination theorems (see for example \cite{Scho}, \cite{Pis}). 

One word on notation: in what follows, we identify $\mathbb{Z}^{k}$, $k=2,3$, with the set of points $(x^1, \dots, x^d ) \in \mathbb{Z}^d$ satisfying $x^{k+1} = x^{k+2} = \dots = x^d =0$. We recall that in this section, constants are numerical unless dependence on additional parameters is explicitly indicated. Moreover, we shall assume throughout this section that
 \begin{equation} \label{3.0}
 d \geq 6.
 \end{equation} 
We also recall that $g(\cdot,\cdot)$, c.f. \eqref{GreenFunction}, stands for the Green function on $\mathbb{Z}^d$. Without further ado, we begin with

\begin{lemma} \label{L3.1} $($Covariance decomposition$)$
\smallskip

\noindent Let $K= \mathbb{Z}^3$. There exists a function $g'$ on $K \times K$ such that
\begin{equation} \label{3.1}
g(x,y)= \sigma^2 (d) \cdot \delta(x,y) \ + \ g'(x,y), \qquad \text{ for all } x,y \in K,
\end{equation}
where $ 1/2 \leq \sigma^2 (d) < 1$, $\sigma^2(d) \to 1$ as $d \to \infty$, $\delta(\cdot , \cdot)$ denotes the Kronecker symbol, and $g'$ is the kernel of a translation invariant, bounded, positive operator $G'$ on $\ell^2 (K)$, which is the operator of convolution with $g'(\cdot,0)$. Its spectral radius $\rho(G')$ satisfies
 \begin{equation} \label{3.2}
 \rho(G') \leq c_3/d.
 \end{equation}
\end{lemma}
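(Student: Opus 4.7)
The plan is Fourier analysis. Since $d\geq 6$, the decay $g((x,0))\sim c|x|^{2-d}$ from \eqref{1.15} guarantees $g(\cdot,0)\in\ell^1(\mathbb{Z}^3)$, so its Fourier transform $\hat g$ is continuous on $\mathbb{T}^3:=[-\pi,\pi]^3$. Starting from the standard representation $g(x) = \int_{\mathbb{T}^d}e^{i\theta\cdot x}/(1-\phi(\theta))\,d\theta/(2\pi)^d$ with $\phi(\theta)=d^{-1}\sum_j\cos\theta_j$ and integrating out the last $d-3$ coordinates yields
\[ \hat g(\omega) \;=\; d\int_{\mathbb{T}^{d-3}}\frac{d\theta}{(2\pi)^{d-3}}\,\frac{1}{a(\omega)+b(\theta)}, \]
where $a(\omega):=3-\cos\omega_1-\cos\omega_2-\cos\omega_3\in[0,6]$ and $b(\theta):=(d-3)-\sum_{j=1}^{d-3}\cos\theta_j\geq 0$. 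I would then set $g'(x,y):=g((x,0),(y,0))-\sigma^2\delta(x,y)$, which is manifestly translation invariant on $K$; the associated operator $G'$ is convolution with $g'(\cdot,0)$, with Fourier symbol $\hat g(\omega)-\sigma^2$.

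For the choice of $\sigma^2$, I would take $\sigma^2(d):=1-3/d$. Jensen's inequality applied to the convex function $t\mapsto 1/t$, together with $\E[b]=d-3$, gives
\[ \hat g(\omega) \;\geq\; \frac{d}{a(\omega)+\E[b]} \;=\; \frac{d}{a(\omega)+d-3} \;\geq\; \frac{d}{d+3} \;\geq\; 1-\tfrac{3}{d}, \]
so $\sigma^2\leq\inf_\omega\hat g(\omega)$ and the Fourier symbol $\hat g-\sigma^2$ is nonnegative, i.e.\ $G'$ is positive. The conditions $1/2\leq\sigma^2<1$ (for $d\geq 6$) and $\sigma^2(d)\to 1$ as $d\to\infty$ are then immediate from the formula $\sigma^2=1-3/d$.

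For the spectral radius bound, observe that $\hat g$ is visibly decreasing in $a(\omega)$, so
\[ \sup_\omega\hat g(\omega) \;=\; \hat g(0) \;=\; d\int_{\mathbb{T}^{d-3}}\frac{d\theta}{(2\pi)^{d-3}}\,\frac{1}{b(\theta)} \;=\; \frac{d}{d-3}\,g^{(d-3)}(0), \]
the last equality being the standard Fourier inversion formula for the Green function at the origin of simple random walk on $\mathbb{Z}^{d-3}$. The asymptotic \eqref{1.16} applied with $k=d-3$ yields $k(g^{(k)}(0)-1)\to 1/2$ as $k\to\infty$, which together with the finiteness of $g^{(k)}(0)$ for each $k\geq 3$ gives a uniform bound $k(g^{(k)}(0)-1)\leq C$ for all $k\geq 3$ and some absolute $C$. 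Combined with $d/(d-3)\leq 2$ for $d\geq 6$, this yields $\sup_\omega\hat g(\omega)\leq 1+c_4/d$, and hence $\rho(G')=\|\hat g-\sigma^2\|_\infty\leq c_4/d+3/d=c_3/d$. Boundedness of $G'$ on $\ell^2(K)$ follows from the bounded continuous Fourier symbol, or equivalently from $g'(\cdot,0)\in\ell^1(\mathbb{Z}^3)$ by Young's inequality.

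The main step requiring care is the Fourier reduction: the derivation of the formula for $\hat g$ and its recognition in terms of the lower-dimensional Green function $g^{(d-3)}(0)$. Once these identifications are in hand, the positivity and the $O(1/d)$ spectral radius bound follow from Jensen's inequality and a single use of the known asymptotic \eqref{1.16}, with no need for any delicate estimate on the integrand near $b(\theta)\sim 0$.
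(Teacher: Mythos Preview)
Your proof is correct and takes a genuinely different route from the paper's. The paper works operator-theoretically: it identifies the convolution operator $A$ on $\ell^2(K)$ with kernel $g|_{K\times K}$, invokes (from Spitzer) that $A^{-1}=I-\Pi$ where $\Pi$ is convolution with the return kernel $\pi(x,y)=P_x[\widetilde H_K<\infty,\,X_{\widetilde H_K}=y]$, rewrites $A^{-1}=\kappa I+\Gamma$ with $\kappa=P_0[\widetilde H_K=\infty]$, and then algebraically inverts to obtain $A=\kappa^{-1}[(1-a)I+T_a]$ with $T_a=aI-\Gamma(\kappa I+\Gamma)^{-1}$. The parameter $a$ is tuned via spectral-theoretic bounds on $\Gamma$ (controlled by $\|\gamma(0,\cdot)\|_{\ell^1}\le 2(1-\kappa)$) to make $T_a$ positive, yielding $\sigma^2(d)=1/(2-\kappa)$; the $O(1/d)$ spectral bound then follows from \eqref{1.17}.

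Your Fourier approach is more direct and more explicit: you compute the symbol $\hat g$ outright, use Jensen to get the pointwise lower bound $\hat g\ge d/(d+3)\ge 1-3/d=:\sigma^2$ (hence positivity of $G'$), and identify $\sup_\omega\hat g=\hat g(0)=\tfrac{d}{d-3}\,g^{(d-3)}(0)$ so that \eqref{1.16} immediately gives the $O(1/d)$ upper bound. This produces the explicit value $\sigma^2(d)=1-3/d$ and avoids the operator gymnastics. The paper's argument, on the other hand, ties the decomposition more transparently to the random-walk escape from $\mathbb{Z}^3$ and uses only $\ell^1$-norm estimates, but at the cost of a less elementary algebraic manipulation to extract the identity part.
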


\begin{proof}
The operator $Af(x)= \sum_{y \in K}g(x,y)f(y)$, for $x \in K$ and $f \in \ell^2(K)$, is a convolution operator, which is bounded and self-adjoint on $ \ell^2(K)$, by \eqref{1.15}, \eqref{3.0}, as well as the translation invariance and the symmetry of $g(\cdot, \cdot)$ (letting $h(\cdot)=g(\cdot,0)$, we also use that $ \norm{Af}_{\ell^2 (K)} = \norm {h*f}_{\ell^2 (K)} \leq \norm{h}_{\ell^1 (K)} \norm{f}_{\ell^2 (K)}$, a special case of Young's inequality, see \cite{ReSi}, pp. 28-29). Moreover, by \cite{Sp}, P25.2 (b), p. 292, it has an inverse
\begin{equation} \label{L3.1.1}
A^{-1} = I - \Pi,
\end{equation}
where $\Pi$ is the bounded self-adjoint operator on $\ell^2(K)$, $\Pi f(x)= \sum_{y \in K}\pi(x,y)f(y)$, for $x \in K$ and $f \in \ell^2(K)$, with kernel
\begin{equation} \label{PI}
\pi(x,y) = P_x[\widetilde{H}_K < \infty, X_{\widetilde{H}_K} = y] = \pi (0,y-x), \qquad \text{for }x, y \in K. 
\end{equation}
Introducing
\begin{equation} \label{kappa}
\kappa = P_0[\widetilde{H}_K = \infty] \in (0,1) \qquad \text{(recall \eqref{3.0})},
\end{equation}
we can write
\begin{equation} \label{Gamma}
A^{-1} = \kappa I + \big( (1-\kappa) I - \Pi \big) \stackrel{\text{def.}}{=}\kappa I + \Gamma,
\end{equation}
where $\Gamma$ is the bounded self-adjoint operator on $\ell^2(K)$ defined by $\Gamma f(x)= \sum_{y \in K}\gamma(x,y)f(y)$, for $x \in K$, $f \in \ell^2(K)$, and
\begin{equation} \label{gamma}
\gamma(x,y) = \left \{
\begin{array}{ll}
- \pi(x,y), & \text{if } y \neq x \\
1- \kappa - \pi(x,x) \stackrel{\eqref{PI}, \eqref{kappa}}{=} P_x [\widetilde{H}_K < \infty, X_{\widetilde{H}_K} \neq x] = \sum_{y \neq x} \pi(x,y), & \text{if } y=x. 
\end{array}
\right.
\end{equation}
Note that by \eqref{PI}, \eqref{gamma}, $\gamma (x,y)=\gamma(0,y-x)$ and $\Gamma$ is a convolution operator on $\ell^2(K)$. By Young's inequality (see above \eqref{L3.1.1}), its operator norm $\norm{\Gamma}$ thus satisfies
\begin{equation} \label{L3.1.4}
\norm{ \Gamma } \leq \norm{\gamma(0,\cdot)}_{\ell^1(K)} =  \sum_{ y \neq 0} \pi(0,y) + \sum_{ y \neq 0} \pi(0,y) \leq 2 P_0\big[ \widetilde{H}_K < \infty \big]  \stackrel{\eqref{kappa}}{=} 2(1-\kappa).
\end{equation} 
Observe also that $\Gamma$ is a positive operator. Indeed, $(\Gamma f, f)_{\ell^2(K)} \stackrel{\eqref{gamma}}{=} \frac{1}{2} \sum_{x,y \in K} \pi(x,y)(f(x)- f(y))^2$, for all $f \in \ell^2(K)$, where $(\cdot, \cdot)_{\ell^2(K)}$ denotes the inner product in $\ell^2(K)$. By \eqref{L3.1.1} and \eqref{Gamma}, we can write, for arbitrary, $a \in (0,1)$, 
\begin{equation} \label{L3.1.3}
\begin{split}
A= \big( \kappa I + \Gamma \big)^{-1} &=  \kappa^{-1} \big( \kappa I + \Gamma - \Gamma \big)  \big( \kappa I +\Gamma \big)^{-1} \\
&=  \kappa^{-1} \big[ I -  \Gamma \big( \kappa I + \Gamma \big)^{-1} \big] \\
&= \kappa^{-1} \big[(1-a)I +T_a \big],
\end{split}
\end{equation}
where $T_a$ is the bounded operator on $\ell^2(K)$,
\begin{equation} \label{Ta}
T_a = aI - \Gamma \big( \kappa I + \Gamma \big)^{-1},
\end{equation}
which is self-adjoint by \eqref{L3.1.3}, since $A$ is. We will now select $a \in (0,1)$ in such a way that the operator $T_a$ is positive. By self-adjointness, we know that the spectrum $\sigma(T_{a})$ of $T_{a}$ satisfies
\begin{equation*}
\sigma(T_{a}) \subset [m(T_{a}), \rho(T_{a})] \subseteq \mathbb{R},
\end{equation*}
where $m(T_{a}) = \inf \{ ( T_{a}  f,f )_{\ell^2(K)} ; \norm{f}_{\ell^2(K)} = 1 \}$ and $ \rho (T_{a}) = \sup \{ ( T_{a}  f,f )_{\ell^2(K)} ; \norm{f}_{\ell^2(K)} = 1 \}$. By the spectral theorem, and using that the function $x \mapsto x/ (\kappa + x)$ is increasing in $x \geq 0$, it follows that
\begin{equation*}
m(T_{a}) \geq a- \frac{\norm{\Gamma}}{\kappa + \norm{\Gamma}} \stackrel{\eqref{L3.1.4}}{\geq} a-  \frac{2 (1- \kappa)}{2- \kappa}. 
\end{equation*}
Selecting $a_0 = 2 (1- \kappa) / (2- \kappa)$, we see that $T_{a_0}$ is a positive operator. Moreover, the application of the spectral theorem and the positivity of $\Gamma$ also show that $\rho (T_{ a_0}) \leq a_0$. If we now define $\sigma^2(d)= \kappa^{-1}(1-a_0) = 1/(2 - \kappa) \in \big(\frac{1}{2},1\big)$ (by \eqref{kappa}), and $G' = \kappa^{-1} T_{ a_0}$, so that $G'f(x)= \sum_{y \in K}g'(x,y)f(y)$ for $x \in K$ and $f \in \ell^2(K)$, then \eqref{L3.1.3} readily yields \eqref{3.1}. In addition, $G'$ is translation invariant, and by \eqref{1.17}, we see that $\sigma^2(d)$ tends to $1$ as $d \to \infty$, and the spectral radius of $G'$ satisfies
\begin{equation*} 
\rho(G') \leq \kappa^{-1} \rho(T_{a_0}) \leq \kappa^{-1} a_0 = \frac{2}{ \kappa(2-\kappa)} \cdot (1- \kappa) \stackrel{\eqref{1.17}}{\leq} c_3 / d, 
\end{equation*}
for a suitable constant $c_3 > 0$. This concludes the proof of Lemma \ref{L3.1}.
\end{proof}

We now decompose the Gaussian free field according to Lemma \ref{L3.1}. To this end, we let $\P_\psi$, $\P_\xi$, be probabilities on auxiliary probability spaces $\Omega_\psi$, $\Omega_\xi$, respectively endowed with random fields $(\psi_x)_{x \in \mathbb{Z}^3}$, $(\xi_x)_{x \in \mathbb{Z}^3}$, such that
\vspace{-0.5ex}
\begin{equation} \label{3.3}
\begin{split}
&\text{under $\P_\psi$, $(\psi_x)_{x \in \mathbb{Z}^3}$, is a centered Gaussian field with} \\
&\text{covariance $\E_\psi [\psi_x \psi_y] = \sigma^2 (d) \cdot \delta(x,y)$, for all $x,y \in \mathbb{Z}^3$},
\end{split}
\end{equation}

\vspace{-0.6ex}
\noindent and
\vspace{-0.6ex}
 \begin{equation} \label{3.4}
\begin{split}
&\text{under $\P_\xi$, $(\xi_x)_{x \in \mathbb{Z}^3}$, is a centered Gaussian field with} \\
&\text{covariance $\E_\xi [\xi_x \xi_y] =g'(x,y)$, for all $x,y \in \mathbb{Z}^3$}.
\end{split}
\end{equation}

\vspace{-0.6ex}
\noindent Then, by \eqref{3.1} and usual Gaussian field arguments (see \cite{AT}, p.11),
\vspace{-0.5ex}
\begin{equation} \label{DECOMP}
\text{$(\varphi_x)_{x \in \mathbb{Z}^3}$, under $\P$, has the same law as $(\psi_x  +  \xi_x)_{x \in \mathbb{Z}^3}$, under $\P_\psi \otimes \P_\xi$.}
\end{equation}
 Moreover, given any level $h \in \mathbb{R}$, we define the (random) sets 
 \vspace{-0.5ex}
\begin{equation} \label{3.5}
E_{\psi}^{\geq h} = \{ x \in \M \ ; \ \psi_x \geq h  \}, \qquad E_{\psi}^{< h} = \M \setminus E_{\psi}^{\geq h},
\end{equation}

\vspace{-0.8ex}
\noindent and $E_{\xi}^{\geq h}$, $E_{\xi}^{< h}$ in analogous manner. A crucial point is that the field $\xi$ acts only as a small perturbation when $d$ is large, which is entailed in \eqref{3.2} and more quantitatively in the following lemma, the proof of which uses ideas developed in \cite{MS} (see in particular Theorem 2.4 therein).

\begin{lemma} \label{L3.2} $\quad$
\smallskip

\noindent There exists a decreasing function $v: (c_3, \infty] \longrightarrow (0 ,1)$, with $\lim_{u \to \infty }v(u) = 0$, such that for all $h >0$ and $d \geq 6$ satisfying $h^2 > c_3 d^{-1}$, and all $A \subset \subset \M$,
\begin{equation} \label{3.6}
\P_\xi \Big[ \bigcap_{x \in A} \{  | \xi_x | > h \} \Big] \leq  \big[ v (h^2  d)\big]^{|A|}.
\end{equation}
\end{lemma}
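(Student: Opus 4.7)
The approach is exponential Chebyshev applied to the quadratic form $\sum_{x\in A}\xi_x^2$, followed by a Gaussian determinantal identity and the spectral bound $\rho(G')\leq c_3/d$ from Lemma \ref{L3.1}. Since $\{|\xi_x|>h\}=\{\xi_x^2>h^2\}$, on the intersection we have $\sum_{x\in A}\xi_x^2>h^2|A|$, whence for every $\lambda>0$,
\begin{equation*}
\P_\xi\Big[\bigcap_{x\in A}\{|\xi_x|>h\}\Big]\;\leq\;e^{-\lambda h^2|A|}\;\E_\xi\Big[e^{\lambda\sum_{x\in A}\xi_x^2}\Big].
\end{equation*}

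The next step is to evaluate the Gaussian Laplace transform. The finite-dimensional marginal $(\xi_x)_{x\in A}$ is centered Gaussian with covariance matrix $\Sigma_A=(g'(x,y))_{x,y\in A}$. Diagonalizing $\Sigma_A$ and reducing to one-dimensional Gaussians yields, for $2\lambda\|\Sigma_A\|_{\mathrm{op}}<1$,
\begin{equation*}
\E_\xi\big[e^{\lambda\sum_{x\in A}\xi_x^2}\big]\;=\;\det\big(I-2\lambda\Sigma_A\big)^{-1/2}.
\end{equation*}
The key point is now to transfer the spectral bound from Lemma \ref{L3.1} to the finite matrix: for any $f\in\ell^2(A)$, extending by zero to $\tilde f\in\ell^2(\M)$ gives $(\Sigma_A f,f)_{\ell^2(A)}=(G'\tilde f,\tilde f)_{\ell^2(\M)}\leq\rho(G')\|f\|^2_{\ell^2(A)}\leq (c_3/d)\|f\|^2_{\ell^2(A)}$, since $G'$ is positive and self-adjoint. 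Thus all eigenvalues of $\Sigma_A$ lie in $[0,c_3/d]$ and, provided $2\lambda c_3/d<1$,
\begin{equation*}
\det(I-2\lambda\Sigma_A)^{-1/2}\;\leq\;(1-2\lambda c_3/d)^{-|A|/2}.
\end{equation*}

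Combining the last two displays and setting $u=h^2d$ gives
\begin{equation*}
\P_\xi\Big[\bigcap_{x\in A}\{|\xi_x|>h\}\Big]\;\leq\;\big[(1-2\lambda c_3/d)^{-1/2}\,e^{-\lambda h^2}\big]^{|A|}.
\end{equation*}
The expression in brackets is minimized in $\lambda\in(0,d/(2c_3))$ at $\lambda=(d/(2c_3))(1-c_3/u)$, which is positive precisely when $u>c_3$, i.e.\ exactly under the hypothesis $h^2>c_3/d$. Substituting produces the explicit candidate
\begin{equation*}
v(u)\;:=\;\sqrt{u/c_3}\cdot e^{1/2-u/(2c_3)}.
\end{equation*}

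It remains to verify the qualitative properties claimed for $v$. A direct computation gives $v(c_3)=1$ and $v'(u)=\tfrac{e^{1/2-u/(2c_3)}}{2\sqrt{c_3u}}(1-u/c_3)<0$ for $u>c_3$, so $v$ is strictly decreasing on $(c_3,\infty]$ with $v(u)\in(0,1)$ for $u>c_3$, and $v(u)\to 0$ as $u\to\infty$ (the exponential dominating the square-root factor). The main obstacle in the argument is the operator-theoretic step connecting $\rho(G')\leq c_3/d$, which is an infinite-dimensional statement, to the eigenvalue bound for the finite principal submatrix $\Sigma_A$; this is what makes the per-site decay rate $v(h^2d)$ independent of $|A|$ possible, and is also the place where the dimensional smallness from Lemma \ref{L3.1} is actually consumed.
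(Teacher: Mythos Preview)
Your proof is correct and follows essentially the same approach as the paper: exponential Chebyshev on $\sum_{x\in A}\xi_x^2$, the Gaussian Laplace transform computed via spectral decomposition of $\Sigma_A$, the principal-submatrix bound $\|\Sigma_A\|_{\mathrm{op}}\leq\rho(G')\leq c_3/d$, and optimization over the exponential parameter. The resulting function $v(u)=\sqrt{u/c_3}\,e^{1/2-u/(2c_3)}$ coincides exactly with the paper's $v(u)=\tilde v(u/(2c_3))$ where $\tilde v(s)=(2es)^{1/2}e^{-s}$; the only cosmetic difference is that the paper first replaces $\sum_i\lambda_i\tilde\xi_i^2$ by $\rho_A\sum_i\tilde\xi_i^2$ and then applies Chernoff to an i.i.d.\ sum, whereas you apply Chernoff first and then bound the determinant eigenvalue-by-eigenvalue.
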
 

\begin{proof}
First note that
\begin{equation} \label{L3.2.1}
\P_\xi \Big[ \bigcap_{x \in A} \{  | \xi_x | > h \} \Big] \leq \P_\xi \Big[ \sum_{x \in A} \xi_x^2 > h^2 |A|  \Big], \qquad \text{for all } h >0.
\end{equation}
Now, assume some ordering of $A \subset \subset \M$ has been specified, and let $G'_A = \big(g'(x,y) \big)_{x,y \in A}$ denote the covariance matrix of the Gaussian vector $ \xi_A = (\xi_x)_{x \in A}$, with decreasing eigenvalues $ \lambda_i \geq 0$, $1 \leq i \leq |A|$, and write $\rho_A = \rho(G'_A)= \lambda_1$ for its spectral radius. Finally, define the diagonal matrix $\Lambda = \text{diag}(\{\lambda_i\})$. By spectral decomposition, $G'_A = O \Lambda O^T$ for some orthogonal matrix $O$. Let $\tilde{\xi}= (\tilde{\xi}_i)_{ 1 \leq i \leq |A|} $ be a Gaussian vector whose components are i.i.d. standard Gaussian variables, and $\P_{\tilde{\xi}}$ be its law. Then $ O \sqrt{\Lambda} \tilde{\xi} \sim \mathcal{N}(0,  O \Lambda O^T)$, i.e. $  O \sqrt{\Lambda} \tilde{\xi} \stackrel{d}{=} \xi_A$, and thus $\sum_{x \in A} \xi_x^2 \stackrel{d}{=} (O \sqrt{\Lambda} \tilde{\xi} )^T O \sqrt{\Lambda} \tilde{\xi} = \sum_{1 \leq i \leq |A|} \lambda_i \tilde{\xi}_i^ 2.$ Inserting this into \eqref{L3.2.1} yields
\begin{equation*}
\P_\xi \Big[ \bigcap_{x \in A} \{  | \xi_x | > h \} \Big] \leq \P_{\tilde{\xi}} \Big[ \sum_{1 \leq i \leq |A|} \tilde{\xi}_i^2 >  \rho_A^{-1} h^2 |A|  \Big] \leq \min_{0 < a <1} e^{-\frac{a h^2 |A|}{2 \rho_A}}  \mathbb{E}_{\tilde{\xi}} \Big[ \prod_{1 \leq i \leq |A|} e^{ \frac{a}{2} {\tilde{\xi}_i}^2} \Big],
\end{equation*}
where we have used Markov's inequality in the last step. But $ \mathbb{E}_{\tilde{\xi}}[e^{a{\tilde{\xi}_i}^2/2}] = (1-a)^{-1/2}$ for all $0<a<1$ and $1 \leq i \leq |A|$, thus yielding
\begin{equation} \label{L3.2.2}
\P_\xi \Big[ \bigcap_{x \in A} \{  | \xi_x | > h \} \Big] \leq \min_{0 < a <1} \Big[ \sqrt{1-a} \cdot e^{\frac{a h^2 }{2 \rho_A}} \Big]^{-|A|}.
\end{equation}
One easily verifies that the function $q(a) = \sqrt{1-a} \cdot e^{a h^2 / 2 \rho_A}$ attains a maximum in the interval $(0,1)$ only if $h^2 / \rho_A > 1 $, which certainly holds if $h^2 > c_3 d^{-1}$ by Lemma \ref{L3.1}. In this regime, the maximum is reached for $a= 1 - \rho_A / h^2$. Inserting this into \eqref{L3.2.2}, we obtain
\begin{equation*}
\P_\xi \Big[ \bigcap_{x \in A} \{  | \xi_x | > h \} \Big] \leq \Big[ \big(e  h^2\rho_A^{-1}\big)^{1/2} \cdot e^{- \frac{ h^2}{2 \rho_A}} \Big]^{|A|}.
\end{equation*}
The function $\tilde{v}(u) = (2eu)^{1/2}e^{-u}$ is $(0,1)$-valued and monotonically decreasing on $(1/2, \infty)$. Thus, $h^2 > c_3 d^{-1}$ ensures that $h^2 / 2\rho_A > 1/2$ and $\tilde{v}(h^2 / 2\rho_A) \leq \tilde{v}(h^2d/ 2c_3) \stackrel{\text{def.}}{=} v(h^2 d)$. This completes the proof of Lemma \ref{L3.2}. 
\end{proof}

We are now ready to introduce a central quantity before proceeding to the main theorem of this section. Namely, we define, for all $d \geq 6$, $h \in \mathbb{R}$ and positive integers $L_0$,
\begin{equation} \label{psi_slab}
\Psi^{(slab)} (d,h,L_0) =  \P \big[E_{\varphi}^{\geq h}  \cap \ \big( \mathbb{Z}^2 \times [0, 2L_0) \times \{ 0\}^{d-3} \big) \text{ contains an infinite cluster} \ \big].
\end{equation}

\vspace{0.1cm}

\begin{theorem} \label{T3.3} $\quad$
\smallskip

\noindent There exists $d_0 \geq 6$, a level $h_0 >0$ and an integer $L_0 \geq1$ such that
\begin{equation} \label{3.SlabPerc}
\Psi^{(slab)} (d,h_0,L_0) =1, \qquad \text{for all } d \geq d_0.
\end{equation}
In particular, the critical level $h_*(d)$ defined in \eqref{h*} satisfies 
\begin{equation} \label{3.8}
\quad h_*(d) \geq h_0 >0, \qquad \text{for all } d \geq d_0.
\end{equation}
\end{theorem}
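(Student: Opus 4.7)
The starting point is the decomposition $(\varphi_x)_{x \in \mathbb{Z}^3} \stackrel{d}{=} (\psi_x + \xi_x)_{x \in \mathbb{Z}^3}$ from \eqref{DECOMP}, in which $\psi$ has independent $\mathcal{N}(0, \sigma^2(d))$ components with $\sigma^2(d) \to 1$, while $\xi$ is a perturbation controlled by Lemma \ref{L3.2}. The plan is to fix a small positive level $h_0$, use supercritical i.i.d.\ site percolation of $\{\psi_x \geq 2 h_0\}$ on $\mathbb{Z}^3$ as a percolating skeleton in the slab, and invoke Lemma \ref{L3.2} to ensure that $|\xi| < h_0$ along this skeleton with overwhelming probability, so that $\varphi = \psi + \xi \geq h_0$ on the resulting path.

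Concretely, I would tile $S_{L_0} = \mathbb{Z}^2 \times [0, 2L_0) \times \{0\}^{d-3}$ with blocks $B_x = x + [0, L_0)^2 \times [0, 2L_0) \times \{0\}^{d-3}$ indexed by $x \in \mathbb{L} = L_0 \mathbb{Z}^2 \times \{0\}^{d-2} \simeq \mathbb{Z}^2$, and write $\widetilde{B}_x$ for the union of $B_x$ with its eight $*$-neighbours in $\mathbb{L}$. Call $x \in \mathbb{L}$ \emph{good} if (a) $E_\psi^{\geq 2 h_0} \cap \widetilde{B}_x$ contains a cluster crossing $\widetilde{B}_x$ in both horizontal directions, and (b) $\max_{y \in \widetilde{B}_x} |\xi_y| < h_0$. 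For $*$-adjacent $x, x' \in \mathbb{L}$, $\widetilde{B}_x$ and $\widetilde{B}_{x'}$ share at least a full block of size $L_0 \times L_0 \times 2L_0$, so the two crossings of (a) necessarily link up into a single connected component of $E_\psi^{\geq 2 h_0}$; combined with (b) this component lies in $E_\varphi^{\geq h_0}$. Thus percolation of good sites in $\mathbb{L} \simeq \mathbb{Z}^2$ forces percolation of $E_\varphi^{\geq h_0}$ in the slab.

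It then remains to bound the probability that a block is bad. For (a): since $\sigma^2(d) \to 1$, $\P_\psi[\psi_0 \geq 2 h_0] \to \P[\mathcal{N}(0,1) \geq 2 h_0]$, which exceeds $p_c^{\mathrm{site}}(\mathbb{Z}^3) < 1/2$ (by \cite{CR}) for $h_0$ small and $d \geq d_0$. Static renormalization for supercritical i.i.d.\ site percolation (Grimmett \cite{Gr}, Ch.~7; see also \cite{GM}, \cite{Pis}) then gives $\P_\psi[(a)^c] \leq \varepsilon(L_0)$ with $\varepsilon(L_0) \to 0$ as $L_0 \to \infty$. For (b): by a union bound together with Lemma \ref{L3.2} applied at a single site, $\P_\xi[(b)^c] \leq c L_0^3 \cdot v(h_0^2 d)$, which tends to $0$ as $d \to \infty$ for any fixed $L_0, h_0$. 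The crucial observation for the Peierls sum is that from any $*$-circuit $x_1, \dots, x_n$ in $\mathbb{L}$ one can extract a sub-family $\{x_{i_k}\}_{k=1}^{n'}$ with $n' \geq n/C$ and the $\widetilde{B}_{x_{i_k}}$ pairwise disjoint (take every third block of the circuit): on this sparse sub-family the $\psi$-events $(a)^c_{x_{i_k}}$ become independent since $\psi$ is i.i.d., and Lemma \ref{L3.2} applied to a transversal of ``bad points'', one per block, yields $\P_\xi[\cap_k (b)^c_{x_{i_k}}] \leq (c L_0^3)^{n'} v(h_0^2 d)^{n'}$.

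The conclusion is the standard 2D Peierls argument in $\mathbb{L} \simeq \mathbb{Z}^2$. If the origin is not in an infinite good cluster, planar duality produces a $*$-connected circuit of bad blocks around $0$; the number of such circuits of combinatorial length $n$ is at most $n \cdot C'^n$. Since $\psi$ and $\xi$ are independent, the probability that an entire specific circuit is bad is bounded, via the sparse sub-family, by $(\varepsilon + c L_0^3 v(h_0^2 d))^{n/C}$. Choosing first $L_0$ large (to make $\varepsilon$ small) and then $d \geq d_0$ large (to make $c L_0^3 v(h_0^2 d)$ small), the Peierls sum $\sum_n n C'^n (\varepsilon + c L_0^3 v(h_0^2 d))^{n/C}$ is strictly less than $1$, whence $\P[0 \in \text{infinite good cluster in } \mathbb{L}] > 0$. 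By the mixing property \eqref{MIXING} and Lemma \ref{perc_prop} applied inside the slab, this yields $\Psi^{(slab)}(d, h_0, L_0) = 1$, and hence \eqref{3.8}. The main obstacle is the long-range correlation of $\xi$, which rules out off-the-shelf finite-range stochastic-domination theorems (as in Liggett--Schonmann--Stacey); Lemma \ref{L3.2} supplies exactly the multi-point estimate required to make the Peierls sum over bad circuits converge.
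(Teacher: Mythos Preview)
Your overall strategy matches the paper's: decompose $\varphi|_{\mathbb{Z}^3}$ as $\psi+\xi$, renormalize on $L_0\mathbb{Z}^2$, declare blocks good via separate events for $\psi$ and $\xi$, and run a planar Peierls argument in which Lemma~\ref{L3.2} handles the long-range correlations of $\xi$. The Peierls bookkeeping (expand $\bigcap_i((a)_{x_i}^c\cup(b)_{x_i}^c)$ over subsets, use independence of $\psi$ and $\xi$, apply Lemma~\ref{L3.2} to a transversal) is exactly what the paper does in Lemma~\ref{PeierlsBound}; the only difference is that you thin the path to make the $\psi$-events independent, whereas the paper uses Liggett--Schonmann--Stacey domination on the $2$-dependent family $(F_x^{2h_0})_{x\in\LL}$. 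Both work, though your ``every third block'' is not literally correct for a self-avoiding $*$-path that may return close to itself; a greedy extraction gives $n'\geq n/25$.

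There is, however, a genuine gap in your geometric linking step. Your condition (a) only asks that $E_\psi^{\geq 2h_0}\cap\widetilde{B}_x$ contain a cluster crossing $\widetilde{B}_x$ in both horizontal directions, and you then claim that for $*$-adjacent $x,x'$ the two crossing clusters ``necessarily link up'' because $\widetilde{B}_x$ and $\widetilde{B}_{x'}$ overlap in a full block. This is false in three dimensions: two crossing clusters in overlapping three-dimensional boxes can easily avoid each other. The paper repairs this by adding to the $\psi$-block event the requirement that $\mathcal{C}_x$ be the \emph{unique} cluster in $B_x^{(3)}(2L_0)$ of diameter $\geq L_0-1$ (condition ii) in the definition of $F_x$); then the crossing cluster of one box, having diameter $\geq L_0-1$ inside the neighbouring box, is forced by uniqueness to belong to $\mathcal{C}_{x'}$ (Lemma~\ref{L3.6}). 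This is precisely the Pisztora-style block event from the references you cite, so the fix is easy, but it is not automatic from overlap alone. A smaller issue: your $\varepsilon(L_0)$ depends on $d$ through $\sigma^2(d)$, so ``first $L_0$, then $d$'' is delicate; the paper sidesteps this by using $\sigma^2(d)\geq 1/2$ and monotonicity to replace $\psi$ by a field $\psi^0$ of fixed variance $1/2$ (see \eqref{Psi_0}), which decouples the choice of $L_0$ from $d$.
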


\begin{proof} To begin with, we note that \eqref{3.8} immediately follows from \eqref{3.SlabPerc}. In order to prove \eqref{3.SlabPerc}, we will use the decomposition \eqref{DECOMP} of the Gaussian free field restricted to $\M$ and the bounds obtained in Lemma \ref{L3.2} to perform a Peierls-type argument.

We let $\PP_p = (p \delta_1 + (1-p) \delta_0)^{\otimes \M}$, for $p \in (0,1)$, and observe that the law of $\big(1\{ \psi_x \geq h \} \big)_{x \in \M}$ on $ \{ 0,1\}^{\M}$ (endowed with its canonical $\sigma$-algebra) is $\PP_{p(h, \sigma(d))}$, where
\begin{equation} \label{p_h_sigma}
p(h, \sigma) \stackrel{\text{def.}}{=} \frac{1}{\sqrt{2 \pi} \sigma}  \int_h^\infty e^{- x^2/2 \sigma^2} \text{d}x, \qquad \text{for } h, \sigma >0,
\end{equation}
so that $p(h, \sigma(d)) = \P_\psi [\psi_0 \geq h ]$ (see \eqref{3.3}). For arbitrary $h > 0$, $L_0 \geq 1$, $d \geq 6$ and $\omega_\xi \in \Omega_\xi$, we define the increasing event (part of $ \{ 0,1\}^{\M}$),
\begin{equation*}
A_\infty^h(\omega_\xi ) = \big\{ \omega \in \{ 0,1\}^{\M} \ ; \  \{ x \in \M ; \omega_x =1\} \cap E_\xi^{\geq -h}( \omega_\xi ) \cap (\mathbb{Z}^2 \times [0, 2L_0) \big) \text{ has an infinite cluster} \big\},
\end{equation*}
and obtain, for all $p' \leq p(2h, \sigma(d))$, using the decomposition \eqref{DECOMP},
\begin{equation} \label{Psi_0_1}
\begin{split}
\Psi^{(slab)} (d,h,L_0) 
&\geq \P_\psi \otimes \P_\xi \big[ E_\psi^{\geq 2h} \cap E_\xi^{\geq -h} \cap \big(\mathbb{Z}^2 \times [0, 2L_0) \big)\text{ contains an infinite cluster} \ \big] \\
&= \int_{\Omega_\xi} \text{d} \P_\xi(\omega_\xi ) \PP_{p(2h, \sigma(d))} \big[A_\infty^h (\omega_\xi )\big] \\
&\geq \int_{\Omega_\xi} \text{d} \P_\xi(\omega_\xi ) \PP_{p'} \big[A_\infty^h (\omega_\xi )\big],
\end{split} 
\end{equation}
where we have used that $A_\infty^h(\omega_\xi )$ is increasing in $\omega$ for every fixed $\omega_\xi$ in the last line. Since $\sigma^2(d) \geq 1/2$ for $d \geq 6$, we have $p(2h, \sigma(d)) \geq p(2h, 1/ \sqrt{2})=p'$ for all $h> 0$ and $d \geq 6$. Thus, \eqref{Psi_0_1} yields
\begin{equation} \label{Psi_0}
\Psi^{(slab)} (d,h,L_0) \geq \P_{\psi^0} \otimes \P_\xi \big[ E_{\psi^0}^{\geq 2h} \cap E_\xi^{\geq -h} \cap \big(\mathbb{Z}^2 \times [0, 2L_0) \big)\text{ contains an infinite cluster} \ \big],
\end{equation}
for all $d \geq 6$, $h > 0$ and $L_0 \geq 1$, where $\psi^0 = (\psi^0_x)_{x \in \M}$ is a field of independent centered Gaussian variables with variance $1/2$, i.e. as in \eqref{3.3} but with $1/2$ in place of $\sigma^2(d)$, and $\P_{\psi^0}$ denotes the probability on $(\Omega_{0}, \A_0)$ governing $\psi^0$. We will prove that the probability on the right-hand-side of \eqref{Psi_0} is equal to one for all $d \geq d_0$, $0< h \leq h_0$, with suitable $d_0$, $h_0$ and $L_0$. The claim \eqref{3.SlabPerc} will immediately follow from this.

We first construct certain families of ``good'' events for the Gaussian fields $\psi^0$ and $\xi$. To this end, we define boxes $B^{(3)}_x(L) = x + \big( [ 0 , L ) \cap \mathbb{Z} \big)^3$ for any $x \in \M$ and positive integer $L$, and introduce a renormalized lattice 
\begin{equation}
\LL= L_0 \mathbb{Z}^2 \ \; \big( \subset \M \big).
\end{equation}
We begin with $\psi^0$ and define $\mathcal{C}_{x} (\omega)$, for any $x \in \LL$ and $\omega \in  \{ 0,1 \}^{\M}$, to be the (possibly empty) open (i.e. value $1$) cluster in $B^{(3)}_x( 2L_0)$ containing the most vertices. If several such clusters exist, we choose one according to some given prescription (say using lexicographic order). For arbitrary $x \in \LL$, we introduce the event $F_{x}$ (in the canonical $\sigma$-algebra of $\{ 0,1\}^{\M}$) as follows: given some $\omega \in \{ 0,1\}^{\M}$, we let $\omega \in F_{x}$ if and only if
\begin{enumerate}
\item[i)] $\mathcal{C}_{x} (\omega)$ is a \textit{crossing} cluster for $B^{(3)}_x( 2L_0)$ in the first two axes-directions, i.e. for $i=1, \ 2,$ there exists an open path $\gamma_i$ in $\mathcal{C}_{x} (\omega)$ with endvertices $y_{(i)},z_{(i)}$ satisfying $y^i_{(i)} = x^i$ and $z^i_{(i)} = x^i + 2L_0 -1$. 
\item[ii)]$\mathcal{C}_{x} (\omega)$ is the \textit{only} open cluster $\mathcal{C}$ of $B^{(3)}_x( 2L_0)$ having the property $\text{diam}(\mathcal{C}) \geq L_0 -1$.
\end{enumerate}
Having introduced $F_{x}$ for all $n \geq 0$ and $x \in \LL$, we define, for $h>0$, the measurable map $\Phi_0^h : \Omega_0 \longrightarrow \{ 0,1\}^{\M},$ $\omega_0 \mapsto \big(1\{ \omega_{{0}_x} \geq h \} \big)_{x \in \M}$, and the events (in $\A_0$)
\begin{equation} \label{T3.3.6}
F_x^h = (\Phi_0^h)^{-1}(F_x), \qquad \text{for all } h>0 \text{ and } x \in \LL.
\end{equation} 
 
We now turn to the (``good'') events for $\xi$, set
\begin{equation} \label{T3.3.4}
G_{x}^{-h}  = \bigcap_{y \in B^{(3)}_x( 2L_0)} \{ \xi_y \geq -h \}, \qquad \text{for all } x \in \LL, \ h>0,
\end{equation}
and note that $\P_\xi [G_{x}^{-h} ] = \P_\xi [ G_{0}^{-h} ]$, for all $x \in \LL$, by translation invariance of $g'(\cdot, \cdot)$, see \eqref{3.1}.

With ``good'' events for $\psi^0$ and $\xi$ at hand (see \eqref{T3.3.6} and \eqref{T3.3.4}), for any $h >0$, we define a vertex $x \in \LL$ to be $h$-\textit{good} if the event
\begin{equation} \label{BAD}
F_{x}^{2h} \times G_{x}^{-h}
 \end{equation}
occurs (under $\P_{\psi^0} \otimes \P_{\xi}$), and $h$-\textit{bad} otherwise. A sequence $\gamma = (x_i)_{0 \leq i \leq m}$, $m \in \mathbb{N} \cup \{ \infty \}$, in $\LL$ such that $|x_{i+1}-x_i|= L_0$ for all $0 \leq i <  m$ will be called a \textit{nearest-neighbor path in $\LL$} (we will refer to $m$ as the length of the path). Similarly, a \textit{$*$-nearest-neighbor path in $\LL$} is any sequence in $\LL$ subject to the weaker condition $|x_{i+1}-x_i|_{\infty}= L_0$ for all $0 \leq i < m$.

A crucial property is that percolation of $h$-good sites in $\LL$ implies percolation of $ E_{\psi^0}^{\geq 2h} \cap E_\xi^{\geq -h}$ (in the slab $\mathbb{Z}^2 \times [0,2L_0)$), which we state as a

\begin{lemma} \label{L3.6} $\quad$
\smallskip

\noindent Let $h >0$, $\gamma = (x_i)_{0 \leq i \leq m}$, $m \in \mathbb{N} \cup \{ \infty \}$, be a nearest-neighbor path in $\LL$ and assume that all vertices $x_i$, $0 \leq i \leq m,$ are $h$-good. Then, the corresponding clusters $\mathcal{C}_{x_i} (\psi^0)$ $($pertaining to the events $F_{x_i}^{2h})$ are subsets of $E_{\psi^0}^{\geq 2h} \cap E_\xi^{\geq -h}$, which are all connected within the set $\bigcup_{i=0}^m B^{(3)}_{x_i}( 2L_0)$.
\end{lemma}
In particular, Lemma \ref{L3.6} implies that if $(x_i)_{i \geq 0}$ is an infinite nearest-neighbor path of $h$-good vertices in $\LL$ which is unbounded, then the set $\bigcup_{i=0}^\infty B^{(3)}_{x_i} (2L_0) \cap E_{\psi^0}^{\geq 2h} \cap E_\xi^{\geq -h}$  (a subset of $\mathbb{Z}^2 \times [0,2L_0)$) contains an infinite cluster.

\vspace{0.2cm}

\begin{proof5}
It suffices to consider the case $m=2$. The general case then follows by induction on $m$. Thus, let $x_1, x_2 \in \LL$, $|x_1 - x_2| = L_0$, be both $h$-good. The following holds for $i=1,2$: by definition of $F_{x_i}^{2h}$, the set  $E_{\psi^0}^{\geq 2h} \cap B^{(3)}_{x_i} (2L_0)$ contains a cluster $\mathcal{C}_{x_i}  \ \big( = \mathcal{C}_{x_i} (\psi^0) \ \big)$ which is crossing in the first two axes-directions. Moreover, since $G_{x_i}^{-h}$ occurs, $\xi_y \geq - h$ for all $y \in \mathcal{C}_{x_i}$, i.e. $\mathcal{C}_{x_i} \subset E_{\xi}^{\geq -h}$.

It remains to show the clusters $\mathcal{C}_{x_1}$ and $\mathcal{C}_{x_2}$ are connected within $B^{(3)}_{x_1}( 2L_0) \cup B^{(3)}_{x_2}( 2L_0)$. Let $k \in \{ 1, 2\}$ be such that $|x_1^k - x_2^k| = L_0$. Since  $\mathcal{C}_{x_1}$ is crossing for $B^{(3)}_{x_1}( 2L_0)$ in the $k$-th direction, $\text{diam}\big(\mathcal{C}_{x_1} \cap B^{(3)}_{x_2}( 2L_0)\big) \geq L_0 - 1$. Hence $\mathcal{C}_{x_1}$ and $\mathcal{C}_{x_2}$ are connected within $B^{(3)}_{x_2}( 2L_0)$ by condition ii) in the above definition of the events $F_{x}$. This concludes the proof of Lemma \ref{L3.6}. \hfill $\square$
\end{proof5}

\vspace{0.3cm}

We now carry on with the proof of Theorem \ref{T3.3}, and select the parameters $h_0$, $L_0$ and $d_0$. First note that $p_c^{\text{site}}(\M)$, the critical level for Bernoulli site percolation on $\M$, satisfies $p_c^{\text{site}}(\M)< 1/2$ (see \cite{CR}, Theorem 4.1). We may thus choose $h_0 > 0$ such that 
\begin{equation} \label{3.h0}
\P_{\psi^0}[\psi_{x=0}^0 \geq 2h_0] \stackrel{\eqref{p_h_sigma}}{=}p\Big(2 h_0, \frac{1}{\sqrt{2}}\Big) = \frac{1}{2} \Big(\frac{1}{2} + p_c^{\text{site}}(\M) \Big),
\end{equation}
which means that the Bernoulli site percolation model on $\M$ associated to choosing sites $x \in \M$ where $\psi_x^0 \geq 2h_0$, is supercritical. By the site-percolation version of Theorem 7.61 in \cite{Gr}, we thus obtain that 
\begin{equation} \label{sec3_1}
\lim_{L_0 \to \infty } \P_{\psi^0} \big[ F_{x}^{2h_0} \big] = 1, \qquad \text{for all } x \in \LL.
\end{equation}
Moreover, the collection $\big( 1 \{ F_{L_0y}^{2h_0} \} \big)_{y \in \mathbb{Z}^2}$ is $2$-dependent and, (see \cite{Gr}, Theorem 7.65, or \cite{Scho}, Theorem $0.0$), there exists a non-decreasing function $\pi: [0,1] \longrightarrow [0,1]$ with $\lim_{\delta \to 1}\pi(\delta) =1$ such that if $\P_{\psi^0} [ F_{0}^{2h_0}  ]  \geq \delta$, then
\begin{equation} \label{stochdom}
\begin{split}
&\big( 1 \{ F_{x}^{2h_0} \} \big)_{x \in \LL} \quad \text{stochastically dominates a family of independent} \\ 
&\text{Bernoulli random variables indexed by $\LL$, with success parameter $\pi(\delta)$}.
\end{split}
\end{equation}
Using \eqref{sec3_1}, we then choose $L_0$ the smallest positive integer such that
\begin{equation} \label{L0_sec3}
 \pi\big(\P_{\psi^0} [ F_{0}^{2h_0}  ] \big) \geq 1 -1/40.
\end{equation}
Having fixed $h_0$ and $L_0$, we choose, in the notation of Lemma \ref{L3.2}, a constant $c_4 > c_3$ such that
\begin{equation} \label{L3.4.1}
(2L_0)^3 \cdot v(u)^{1/4} \leq 1/40, \qquad \text{for all } u \geq c_4,
\end{equation}
(recall that $v(\cdot)$ is monotonically decreasing) and define
 \begin{equation} \label{d_0def}
 d_0 =[ h_0^{-2} c_4] +1, 
\end{equation}
so that \eqref{L3.4.1} and \eqref{d_0def} yield
\begin{equation} \label{3.ProbaD0}
(2L_0)^3 \cdot  v(h_0^2 d)^{1/4} \leq (2L_0)^3 \cdot v(h_0^2 d_0)^{1/4} \leq 1/40, \qquad \text{for all } d \geq d_0.
\end{equation}

We now proceed to the last step of the proof, which mainly encompasses a Peierls argument. To this end, we introduce, for $x \in \LL $ and $N$ a multiple of $L_0$, the event $H^{h_0}(x,N)$  that $x$ is connected to $\{ y \in \LL ; |y-x|_\infty= N \}$, the restriction to $\LL$ of the $\ell^\infty$-sphere of radius $N$ centered around $x$, by a $*$-path of $h_0$-bad vertices in $\LL$. In the following lemma, we show that this event has small probability.

\begin{lemma} \label{PeierlsBound} $\quad$
\smallskip

\noindent For all $d \geq d_0$, $n \geq 1$, and $x \in \LL$,
\begin{equation} \label{3.Peierls}
\P_{\psi^0} \otimes \P_{\xi} \big[ H^{h_0}(x,nL_0) \big] \leq 2^{-n}.
\end{equation}
\end{lemma}

\begin{proofP}
For $x \in \LL$ and $n \geq 1$, we denote by $\Gamma_{x,n}^*$ the set of self-avoiding $*$-paths in $\LL$ starting at $x$ of length $n$. For $H^{h_0}(x,nL_0)$ to occur, there must be a self-avoiding $*$-path $\gamma=(x_i)_{0\leq i \leq n}$ of $h_0$-bad vertices in $\LL$ starting at $x$, hence
\begin{equation} \label{LL1}
\begin{split}
\P_{\psi^0} \otimes \P_{\xi} \big[ H^{h_0}(x,nL_0) \big]  
&\leq\P_{\psi^0} \otimes \P_{\xi} \Big[ \bigcup_{\gamma  \in\Gamma_{x,n}^* } \{ \gamma \text{ is $h_0$-bad}\}\Big] \\
& \leq |\Gamma_{x,n}^*| \sup_{\gamma = (x_i)_{0\leq i \leq n} \in \Gamma_{x,n}^*} \P_{\psi^0} \otimes \P_{\xi} \Big[ \bigcap_{i=1}^n \Big( (F_{x_i}^{2h_0})^c  \cup  (G_{x_i}^{-h_0})^c \Big) \Big],
\end{split}
\end{equation}
where we have identified $(F_{x_i}^{2h_0})^c$ with $(F_{x_i}^{2h_0})^c \times \Omega_\xi$ in the last line (and similarly for $(G_{x_i}^{-h_0})^c$). For arbitrary $\gamma = (x_i)_{0\leq i \leq n} \in \Gamma_{x,n}^*$, the probability on the right-hand side of \eqref{LL1} is equal to (setting $[[n]] = \{1,\dots,n \}$)
\begin{equation} \label{LL2}
\begin{split}
&\P_{\psi^0} \otimes \P_{\xi} \Big[ \bigcup_{k=0}^n \  \bigcup_{\{i_1, \dots, i_k \} \subset [[n]]} \  \bigcap_{j \in [[n]] \setminus \{i_1, \dots, i_k \}}  (F_{x_j}^{2h_0})^c \ \bigcap_{i\in \{i_1, \dots, i_k \}} (G_{x_i}^{-h_0})^c   \Big] \\
&\leq \sum_{k=0}^n \binom{n}{k} \sup_{\{i_1, \dots, i_k \} \subset [[n]]} \P_{\psi^0}   \Big[   \bigcap_{j \in [[n]] \setminus \{i_1, \dots, i_k \}}  (F_{x_j}^{2h_0})^c \Big] \cdot \P_{\xi} \Big[ \bigcap_{i\in \{i_1, \dots, i_k \}} (G_{x_i}^{-h_0})^c \Big].
\end{split}
\end{equation}
By stochastic domination and our choice of $L_0$, c.f. \eqref{stochdom} and \eqref{L0_sec3}, we have
\begin{equation} \label{LL3}
\P_{\psi^0}   \Big[   \bigcap_{j \in [[n]] \setminus \{i_1, \dots, i_k \}}   (F_{x_j}^{2h_0})^c \Big] \leq 40^{-(n-k)}.
\end{equation}
When $(G_{x_{i}}^{-h_0})^c$, $i=i_1, \dots, i_k$, simultaneously occur, we can choose $k$ sites $z_j$, $1 \leq j \leq k$, in the respective boxes $B_{x_{i_j}}^{(3)}(2L_0)$, $1 \leq j \leq k$, such that $\xi_{z_j} < -h_0$ (c.f. definition \eqref{T3.3.4}). Any such $z_j$ belongs to exactly four boxes $B_x^{(3)}(2L_0)$, $x \in \LL$. Since $(x_i)_{0 \leq i \leq n}$ is self-avoiding in $\LL$, we thus have $|\{ z_j \ ; \ 1 \leq j \leq k\}| \geq k/4$. As a result, Lemma \ref{L3.2} yields
\begin{equation} \label{LL4}
\P_{\xi} \Big[ \bigcap_{i\in \{i_1, \dots, i_k \}} (G_{x_i}^{-h_0} )^c \Big] \leq (2L_0)^{3k} \cdot \big( v(h_0^2d_0) \big)^{\frac{k}{4}} \stackrel{\eqref{3.ProbaD0}}{\leq} 40^{-k}.
\end{equation}
Note that $|\Gamma_{x,n}^*| \leq (3^2-1)^n=8^n$. Putting \eqref{LL2}, \eqref{LL3} and \eqref{LL4} together, and substituting the resulting bound into \eqref{LL1}, we finally obtain,
\begin{equation*}
\P_{\psi^0} \otimes \P_{\xi} \big[ H^{h_0}(x,nL_0) \big] \leq  8^n  \sum_{k=0}^n \binom{n}{k} 40^{-k}40^{-(n-k)}=( 2 / 5 )^n < 2^{-n}.
\end{equation*}
This completes the proof of Lemma \ref{PeierlsBound}. \hfill $\square$
\end{proofP}

\vspace{0.3cm}

We can now conclude the proof of Theorem \ref{T3.3}. For arbitrary $d \geq d_0$, we consider the event that the set $\LL \cap [0,2L_0)^2$ is surrounded by a $*$-circuit (i.e. a self-avoiding $*$-path except for the end point which coincides with the starting point) of $h_0$-bad vertices in $\LL$. Considering a point of this circuit on the first axis with largest coordinate, we see that the probability of this event is bounded by
\begin{equation} \label{T3.3.9}
\sum_{n=2}^\infty \P_{\psi^0} \otimes \P_{\xi} [H^{h_0}(0,nL_0)] \stackrel{\eqref{3.Peierls}}{\leq}  \sum_{n=2}^\infty 2^{-n} < 1.
\end{equation}
If this event does not occur, then by planar duality (c.f. \cite{Gr}, Section 11.2), there exists an infinite self-avoiding nearest-neighbor path $\gamma =(x_i)_{i\geq 0}$ of $h_0$-good vertices in $\LL$. Lemma \ref{L3.6} (see in particular the remark following it) then implies that the set
\begin{equation*}
 \Big( \bigcup_{i=0}^\infty B^{(3)}_{x_i} (2L_0) \cap E_{\psi^0}^{\geq 2h_0} \cap E_\xi^{\geq -h_0} \Big) \quad  \subset \  \mathbb{Z}^2 \times [0,2L_0)  
\end{equation*}
contains an infinite cluster. By \eqref{T3.3.9}, this event happens with positive probability, so that
\begin{equation*}
\P_{\psi^0} \otimes \P_\xi \big[ E_{\psi^0}^{\geq 2h_0} \cap E_\xi^{\geq -h_0} \cap \big(\mathbb{Z}^2 \times [0, 2L_0) \big)\text{ contains an infinite cluster} \ \big] > 0, \ \text{for all } d \geq d_0.
\end{equation*}
It then follows by \eqref{Psi_0} and ergodicity (see Lemma \ref{perc_prop}) that the value of $\Psi^{(slab)} (d,h_0,L_0)$ is in fact one. This proves \eqref{3.SlabPerc} and thus concludes the proof of Theorem \ref{T3.3}.
\end{proof}

\begin{remark} $\quad$ \label{ConcRems}
\medskip

\noindent1) We show in Theorem \ref{T3.3} that $E^{\geq h}_\varphi$ percolates in a two-dimensional slab for small but positive $h$ when $d$ is sufficiently large. However, it should be underlined that $E^{\geq h}_\varphi \cap \mathbb{Z}^2$ does not percolate for any $h \geq 0$, as we now briefly explain. Indeed, the conditions of Theorem 14.3 in \cite{HJ} (which is itself a variant of \cite{GKR} when the finite energy condition holds) are met for the law of $\big( 1\{ \varphi_x \geq 0 \}\big)_{x \in \mathbb{Z}^2}$ on $\{0,1 \}^{\mathbb{Z}^2}$ under $\P$ (in particular, positive correlations, see Definition 14.1 in \cite{HJ}, follow from the FKG-inequality). Hence $E^{\geq 0}_\varphi \cap \mathbb{Z}^2$ and its complement  in $\mathbb{Z}^2$ cannot \emph{both} have infinite clusters. Observe that $\big( 1\{ \varphi_x \geq 0 \}\big)_{x \in \mathbb{Z}^2}$ and $\big( 1\{ \varphi_x < 0 \}\big)_{x \in \mathbb{Z}^2}$ have the same law under $\P$, by symmetry. If $E^{\geq 0}_\varphi \cap \mathbb{Z}^2$ percolated (with probability one, by ergodicity), the same would hold true as well for $E^{< 0}_\varphi \cap \mathbb{Z}^2$, leading to a contradiction. Therefore, $E^{\geq 0}_\varphi \cap \mathbb{Z}^2$ does not percolate.

\vspace{0.3cm}

\noindent2) With Lemma \ref{L3.2} at hand, one may immediately apply the criterion of Molchanov and Stepanov (c.f. \cite{MS}, Theorem 2.1) to infer that $E_\xi^{\geq -h}$ (c.f. \eqref{3.5} for notation) percolates \textit{strongly} in $\M$ when $h^2d$ is sufficiently large, i.e. not only does $E_\xi^{\geq -h}$ percolate, but in addition $E_\xi^{< -h}$ doesn't, $\P_{\xi}$-almost surely. However, we note that $E_\psi^{\geq h}$ does not percolate strongly, since for all $h >0$, we have $p_c = p_c^{\text{site}}(\M) < p(h,\sigma(d)) = \P_\psi [\psi_0 \geq h] < 1/2$,  hence in particular $p(h,\sigma(d)) \in (p_c,1-p_c)$, where both $E_\psi^{\geq h}$ and its complement possess an infinite cluster with positive probability (in fact with probability one). 
\vspace{0.3cm}

\noindent3) It remains open whether $h_*(d)$ is actually strictly positive for \textit{all} $d \geq 3$. Recent simulations suggest this is the case when $d=3$, with an approximate value $\P[\varphi_0 \geq h_*] \simeq 0.16$, see \cite{M}, Section 4.1.2, and Figure 4.1 in Appendix 4.4. \hfill $\square$
\end{remark}

\vspace{-0.4cm}

\section*{Acknowledgments}

Pierre-Fran\c cois Rodriguez would like to thank Bal\'azs R\'ath and Art\"em Sapozhnikov for presenting their recent work \cite{RS}, as well as David Belius and Alexander Drewitz for stimulating discussions.

\vspace{-0.3cm}

\end{document}